\tikzset{
>=stealth',
  punktchain/.style={
    rectangle,
    rounded corners,
    draw=black, thick,
    minimum height=3em,
    text centered,
    on chain},
  line/.style={draw, thick, <-},
  element/.style={
    tape,
    top color=white,
    bottom color=blue!50!black!60!,
    minimum width=8em,
    draw=blue!40!black!90, very thick,
    text width=10em,
    minimum height=3.5em,
    text centered,
    on chain},
  every join/.style={->, thick,shorten >=1pt},
  decoration={brace},
  tuborg/.style={decorate},
  tubnode/.style={midway, right=2pt},
}
\renewcommand\_{^{}_}
\renewcommand\;{\hspace{.6pt}}
\newcommand\To{\longrightarrow}
\newcommand\into{\hookrightarrow}
\newcommand{\Into}{\ensuremath{\lhook\joinrel\relbar\joinrel\rightarrow}}
\newcommand\Onto{\longrightarrow\hspace{-5.5mm}\longrightarrow}
\newcommand\PP{\mathbb P}
\newcommand\Q{\mathbb Q}
\newcommand\R{\mathbb R}
\newcommand\N{\mathbb N}
\newcommand\Z{\mathbb Z}
\newcommand\J{\mathsf J\;}
\newcommand\cA{\mathcal A}
\newcommand\cD{\mathcal D}
\newcommand\cH{\mathcal H}
\newcommand\cO{\mathcal O}
\newcommand\ch{\operatorname{ch}}
\newcommand\Pic{\operatorname{Pic}}
\newcommand\Ext{\operatorname{Ext}}
\newcommand\rk{\operatorname{rank}}
\newcommand\Coh{\operatorname{Coh}}
\newcommand\cok{\operatorname{coker}}
\newcommand\js{\operatorname{JS}}
\newcommand\pt{\operatorname{PT}}
\newcommand\dt{\operatorname{DT}}
\newcommand\p{\operatorname{P}}
\newcommand\qe{\operatorname{I}}
\renewcommand\v{\mathsf v}
\newcommand\Ab{\mathcal A_{\;b}}
\newcommand\nubw{\nu\_{b,w}}
\newcommand\al{\alpha}
\renewcommand\({\big(}
\renewcommand\){\big)}
\renewcommand\={\ =\ }
\newcommand\w{\operatorname{w}}
\def\abs#1{\left\lvert#1\right\rvert}
\newcommand\beq[1]{\begin{equation}\label{#1}}
\newcommand\eeq{\end{equation}}
\newcommand\beqa{\begin{eqnarray*}}
\newcommand\eeqa{\end{eqnarray*}}
\newtheorem*{rep@theorem}{\rep@title}
\newcommand{\newreptheorem}[2]{%
\newenvironment{rep#1}[1]{%
 \def\rep@title{#2 \ref{##1}}%
 \begin{rep@theorem}}%
 {\end{rep@theorem}}}
\newtheorem{Thm}{Theorem}[section]
\newtheorem{Thm*}{Theorem}
\newtheorem{Prop}[Thm]{Proposition}
\newtheorem{Lem}[Thm]{Lemma}
\newtheorem{Cor}[Thm]{Corollary}
\newtheorem{Con}[Thm]{Conjecture}
\newtheorem{thm-int}{Theorem}
\theoremstyle{definition}
\newtheorem{Def-s}[Thm]{Definition}
\newtheorem{Def}[Thm]{Definition}
\newtheorem{Rem}[Thm]{Remark}
\newcommand{\ignore}[1]{}
\begin{document}
\title[explicit formulae for DT invariants]{\ \vspace{-15mm} \\ explicit formulae for rank zero DT invariants and \\ the OSV conjecture \vspace{-3mm}}
\author{Soheyla Feyzbakhsh\vspace{-6mm}}
\maketitle
\begin{abstract}
	Fix a Calabi-Yau 3-fold $X$ satisfying the Bogomolov-Gieseker conjecture of Bayer-Macr\`i-Toda, such as the quintic 3-fold. 
	
	By two different wall-crossing arguments we prove two different explicit formulae relating rank 0 Donaldson-Thomas invariants (counting torsion sheaves on $X$ supported on ample divisors) in terms of rank 1 Donaldson-Thomas invariants (counting ideal sheaves of curves) and Pandharipande-Thomas invariants. In particular, we prove a slight modification of Toda's formulation of OSV conjecture for $X$. 
	
    When $X$ is of Picard rank one, we also give an explicit formula for rank two DT invariants in terms of rank zero and rank one DT invariants.

\end{abstract}
\bigskip

\section{Introduction} \label{intro}




Let $(X, \cO_X(1))$ be a smooth polarised Calabi-Yau threefold satisfying the Bogomolov-Gieseker conjecture of \cite{bayer:bridgeland-stability-conditions-on-threefolds,bayer:the-space-of-stability-conditions-on-abelian-threefolds} and let $H \coloneqq c_1(\cO(1))$. Let $\J(\al)\in\Q$ be the Joyce-Song's generalised Donaldson-Thomas invariant \cite{joyce:a-theoery-of-generalised-donadlson-thomas-invariants} counting $H$-Gieseker semistable sheaves of numerical K-theory class $\al$ on $X$. Fix a rank zero, dimension 2 class 
\begin{equation}\label{class-v}
\v = (0, D, \beta, m) \in K(X)
\end{equation}
with $D \neq 0$. In this paper, we relate $\J(\v)$ to Donaldson-Thomas (DT) type invariants on $X$ counting ideal sheaves of curves and Pandharipande-Thomas (PT) stable pairs via two different procedures.

\textbf{Method I:} We do wall-crossing in the space of weak stability condition for the class $\v$ as in \cite{toda:bogomolov-counting}. Then we find an explicit expression of $\J(\v)$ in terms of rank 1 DT invariants and PT invariant. However, the construction works only for specific classes $\v$. 

The PT stable pair invariant $\p_{m_1, \beta_1}$ counts pairs $(F, s)$ consisting of a 1-dimensional pure sheaf $F$ with $(\ch_2(F), \ch_3(F)) = (\beta_1, m_1)$ and a section 
$$
s \colon \cO_X \rightarrow F
$$ 
with zero-dimensional cokernel. Also $\qe_{m_2, \beta_2} 
$ is the rank 1 DT invariant counting ideal sheaves of subschemes $C \subset X$ satisfying 
\begin{equation*}
[C] = \beta_2 \qquad \text{and} \qquad \chi(\cO_C) = m_2.
\end{equation*}
For the rank zero class $\v$ \eqref{class-v}, we define
\begin{equation*}
Q(\v) \coloneqq \frac{1}{2} \left(\frac{D.H^2}{H^3} \right)^2 + 6\left(\frac{\beta.H}{D.H^2}\right)^2 - \frac{12\,m}{D.H^2}\, . 
\end{equation*}
\begin{Thm}\label{thm-rank zero-wall-crossing}
		(i) If $Q(\v) <0$, there is no slope-semistable sheaf of class $\v$. 
		\newline
		(ii) If 
		\begin{equation}\label{bound for Q}
		(H^3)^2\, Q(\v)\ < \  
		D.H^2 + \frac{2}{D.H^2} - \frac{5}{2} - \frac{2}{(D.H^2)^2}\,, 
		\end{equation}
		any slope-semistable sheaf of class $\v$ is slope-stable and 
		\begin{equation*}\label{sum}
		\J(\v) = \big(\#H^2(X,\Z)_{\mathrm{tors}}\big)^2 \sum_{\substack{v_1\, =\, -e^{D_1}(1, 0, -\beta_1, -m_1)\\ v_2 \,=\, e^{D_2}(1, 0, - \beta_2, -m_2)\\
				v_1+v_2\, =\, \v \\
				(D_i,\ \beta_i,\ (-1)^{i+1}m_i) \,\in\, M\left(\v \right)
		}}  (-1)^{\chi(v_2, v_1) -1}\;\chi(v_2, v_1)\, \p_{-m_1, \beta_1}\ \qe_{m_2, \beta_2}\,.
		\end{equation*}
Here $M(\v)$ is a subset of $H^2(X, \Z) \oplus H^4(X, \Z) \oplus H^6(X, \Z)$ described in Definition \ref{Def.M-v} which only depends on $\ch_1(\v).H^2 =D.H^2$.    
\end{Thm}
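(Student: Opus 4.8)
The plan is to handle the two parts by different mechanisms: part~(i) is a Bogomolov--Gieseker-type inequality for the fixed class $\v$, while part~(ii) is an explicit wall-crossing computation in the space of weak stability conditions, in the spirit of Toda.

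For part~(i), I would first note that any $H$-slope-semistable sheaf $E$ of the rank-zero class $\v$ is a pure two-dimensional torsion sheaf, hence lies in the tilted heart $\Ab$ and is $\nubw$-semistable for $(b,w)$ in the large-volume region. Feeding $E$ into the Bogomolov--Gieseker inequality guaranteed by the standing hypothesis bounds $\ch_3^b(E)$ from above in terms of the lower twisted Chern characters. Rewriting this cubic inequality in terms of $(D,\beta,m)$ and optimising over the parameter $b$ yields precisely $Q(\v)\ge 0$; its negation is the assertion of~(i).

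For part~(ii) I would first upgrade ``semistable'' to ``stable.'' Part~(i) forces $Q(\v)\ge0$, and the bound \eqref{bound for Q} caps $Q(\v)$ from above, confining the discriminant of $\v$ to a narrow window. If $E$ were strictly semistable, a Jordan--H\"older factorisation $\v=\v'+\v''$ into classes of equal reduced Hilbert polynomial, each obeying the Bogomolov--Gieseker inequality, would force $Q(\v)$ outside this window; hence no strictly semistable sheaves exist and $\J(\v)$ counts honestly stable objects. I would then run the wall-crossing: beginning in the large-volume chamber, where $\nubw$-stability of the class $\v$ agrees with Gieseker stability, I decrease the volume until $E$ meets its first wall $W$. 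On $W$ every semistable object of class $\v$ fits into a short exact sequence in $\Ab$,
\begin{equation*}
0 \to A \to E \to B \to 0,
\end{equation*}
with $A$ and $B$ tilt-semistable of the common slope on $W$. The window for $Q(\v)$ is arranged so that the only Chern characters admissible for $(A,B)$ are the rank $\mp1$ classes $v_1=-e^{D_1}(1,0,-\beta_1,-m_1)$ and $v_2=e^{D_2}(1,0,-\beta_2,-m_2)$, with no higher-rank or finer splitting permitted. Via the large-volume identification one recognises the tilt-stable objects of class $v_2$ as ideal sheaves $\cI_C$, giving $\J(v_2)=\qe_{m_2,\beta_2}$, and those of class $v_1$ as shifts of PT stable pairs $[\cO_X\to F]$, giving $\J(v_1)=\p_{-m_1,\beta_1}$.

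Finally I would apply the Joyce--Song wall-crossing formula across $W$. On one side $E$ is stable and contributes to $\J(\v)$; on the other it is the extension above, so the jump in the generalised invariant is governed by the Euler pairing $\chi(v_2,v_1)$, which measures (motivically) the extensions realising $E$ and supplies the factor $(-1)^{\chi(v_2,v_1)-1}\chi(v_2,v_1)$. Summing over the admissible splittings---those recorded by the set $M(\v)$ of Definition~\ref{Def.M-v}---and weighting by the $(\#H^2(X,\Z)_{\mathrm{tors}})^2$ torsion lifts of the numerical twists $D_1,D_2$ produces the stated formula. I expect the main obstacle to be the numerical control of the wall: showing that the window imposed by \eqref{bound for Q} genuinely excludes every destabilising configuration except the rank $\pm1$ ones, which is exactly where the precise shape of the bound---with its $D.H^2$, $\tfrac{2}{D.H^2}$ and $\tfrac{2}{(D.H^2)^2}$ terms---must be matched against the Bogomolov--Gieseker inequality applied to each candidate factor. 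A secondary point is verifying that $W$ is the \emph{only} wall between the large-volume chamber and the locus where $E$ decomposes, so that no intermediate corrections enter.
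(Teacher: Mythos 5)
Your overall architecture --- wall-crossing for the class $\v$ itself, rank $\pm1$ destabilising factors, Joyce--Song jumps governed by $\chi(v_2,v_1)$ --- is the paper's Method I, but two steps fail as written. First, part (i): you apply the Bogomolov--Gieseker inequality to $E$ only where you know it is semistable, namely in the large-volume region, and there the inequality is \emph{vacuous} for a rank-zero class. Indeed, by \eqref{boglinear} with $\ch_0(\v)=0$ one has $\tfrac12 B_{b,w}(\v)=C_1^2\,w-C_1C_2\,b+(2C_2^2-3C_1C_3)$ with $C_i=\ch_i(\v).H^{3-i}$, which is linear in $w$ with positive leading coefficient $C_1^2=(D.H^2)^2$; so it holds automatically once $w\gg0$, for every $b$, and no ``optimising over $b$'' extracts $Q(\v)\ge0$ from it. The missing idea is the paper's Lemma \ref{lem-first part of theorem 1}: one must first show there is \emph{no wall} for $\v$ above the line $\ell_{\v}$ of equation \eqref{equation-} (the chord of slope $s/k$ and horizontal width $k$ in $\partial U$), so that slope-semistability propagates from the large-volume chamber all the way down to $\ell_{\v}$; only then does the fact that semistable objects cannot exist below $\ell_f$ force $\ell_f$ to lie on or below $\ell_{\v}$, which is exactly $Q(\v)\ge0$. (By contrast, your Jordan--H\"older route to ``semistable $\Rightarrow$ stable'' is a correct and genuinely different alternative to the paper's Lemma \ref{lem-no-strictly-semistable}: for an equal-slope rank-zero decomposition $\v=\v'+\v''$ one computes $Q(\v)=\tfrac{k'}{k}Q(\v')+\tfrac{k''}{k}Q(\v'')+\tfrac32 k'k''$, so $Q(\v'),Q(\v'')\ge0$ gives $(H^3)^2Q(\v)\ge\tfrac32(D.H^2-1)$, contradicting \eqref{bound for Q}; but this presupposes part (i) applied to the factors, so it inherits the gap above.)

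Second, part (ii): your picture of a single wall $W$, with stability on one side and the decomposition on the other, is not correct, and your ``secondary point'' (verifying that $W$ is the only wall) is neither true in general nor what is actually needed. For a rank-zero class the walls are the parallel lines of slope $\beta.H/D.H^2$, one for each destabilising pair, and there are typically several of them between the large-volume chamber and $\ell_f$. The correct anchor is the vanishing $\J_{b,w}(\v)=0$ for $(b,w)$ below $\ell_f$ (again a consequence of the BG inequality), after which $\J(\v)$ equals the sum of the Joyce--Song jumps across \emph{all} walls, each jump being the two-factor term \eqref{sentence-2-terms} because Proposition \ref{prop.destabilising} shows every wall has exactly two factors, of ranks $\mp1$, both stable on the wall. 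You also omit the converse statement, Proposition \ref{prop.allowed classes}: every pair of classes allowed in $M(\v)$ whose invariants are nonzero genuinely spans a wall, i.e.\ both factors are $\nubw$-stable of equal slope at some point of $U$. Without it, the sum over $M(\v)$ in the theorem could contain terms that no wall accounts for, and the claimed equality would not follow. (A minor further point: at a wall one gets $\J(v_2)=\#H^2(X,\Z)_{\mathrm{tors}}\cdot\qe_{m_2,\beta_2}$ and $\J(v_1)=\#H^2(X,\Z)_{\mathrm{tors}}\cdot\p_{-m_1,\beta_1}$, which is where the squared torsion factor comes from, as you correctly guessed.)
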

In \cite[Theorem 3.18]{toda:bogomolov-counting} Toda proved Theorem \ref{thm-rank zero-wall-crossing} for Calabi-Yau 3-folds $X$ with $\Pic(X) = \mathbb{Z}.H$. In this paper, we apply a different wall-crossing argument to manage all Calabi-Yau 3-folds with arbitrary Picard rank.   

Toda's work in \cite{toda:bogomolov-counting} is motivated by Denef--Moore's approach \cite{denef:split-states} toward Ooguri--Strominger--Vafa (OSV) conjecture \cite{vafa:black-hole-attractors} relating black hole entropy and topological string on Calabi-Yau 3-folds. In \cite[Conjecture 1.1]{toda:bogomolov-counting}, he gives a mathematical formulation of OSV conjecture and proves the conjecture when $X$ is of Picard rank one. In Section \ref{section.osv}, as a result of Theorem \ref{thm-rank zero-wall-crossing}, we prove a slight modification\footnote{See Remark \ref{rem-toda} for more details.} of his conjecture for arbitrary Calabi-Yau 3-folds, see Theorem \ref{thm-toda-osv}.

\textbf{Method II.} To find an explicit expression for rank 0 DT invariants in terms of rank 1 DT and PT invariants, we may apply the same technique as \cite{feyz:rank-r-dt-theory-from-1}. Fix $n \gg 0$ and consider a non-zero map $s \colon \cO_X(-n) \xrightarrow{s} F$ for an $H$-Gieseker semistable sheaf $F$ of class $\v$. Then we do wall-crossing on the space of weak stability conditions for class $\v_n \coloneqq \ch(\text{cone}(s))$ \emph{instead of class $\v$}. By \cite[Theorem 2]{feyz:rank-r-dt-theory-from-1}, this gives a universal formula expressing $\J(\v)$ in terms of PT invariants. In this paper we show that the formula can be made explicit when $\Pic(X) = \mathbb{Z}.H$. Note that against Method I, this approach works for any arbitrary class $\v$ of rank zero, but the final relation is not as direct as what we get via Method I. We set 
 \begin{align*}
\pt^{\v, n}(x, y, z) \coloneqq \sum_{\alpha_1\,=\, -e^{k_1H}(1,\, 0,\, -\beta_1,\, m_1) \,\in\, M_{\v, n}} \p_{m_1, \beta_1}\ x^{k_1}\,y^{\beta_1-\frac{1}{2}k_1^2H^2}\,z^{-m_1 -k_1\beta_1.H +\frac{1}{6}k_1^3H^3} \,,
\end{align*}
where $M_{\v, n}$ is a set of rank $-1$ classes in $K(X)$ depending on $\v$ and $n$, see Section \ref{js section}. For any $\mu \in \mathbb{R}$, we consider the generating series
\begin{equation*}
	A(\v_n, \mu) \coloneqq \pt^{\v, n}(x, y, z)  \ \cdot\  
	\prod_{\substack{  
			\al'\, =\, (0,\, k'H,\, \beta',\, m')\, \in\, K(X)\\
			0 \, <\,  k'\, < \, k \\
			\frac{\beta'.H}{k'H^3} \, > \, \mu
	}}
	\exp\left((-1)^{\chi_{\al'}}\,\chi_{\al'}\ \J(\al') \, x^{k'}y^{\beta'}z^{m'}  \right) 
	\end{equation*}
Here $\chi_{\alpha'}$ is a real number described in Definition \ref{Def.A.alpha}. 
\begin{Thm}\label{thm-rank0-n}
	Let $X$ be a smooth Calabi-Yau 3-fold with $\Pic(X) = \mathbb{Z}.H$, and let $\ch_1(\v) = D = kH$. There is $\mu_{\v, n} \in \mathbb{R}$ such that the coefficient of $x^{n+k}y^{\beta-\frac{n^2H^2}{2}}z^{m+ \frac{n^3H^3}{6}}$ in the series
	\begin{equation*}
	\frac{(-1)^{\chi(\cO_X(-n), \v) +1}}{\chi(\cO_X(-n), \v)}
	A(\v_n, \mu_{\v, n})
	\end{equation*}
	is equal to $\J(\v)$. This expresses $\J(v)$ in terms of PT invariants and DT invariants for rank zero classes with lower $\ch_1H^2$, so an inductive argument gives $\J(v)$ in terms of PT invariants.   
\end{Thm}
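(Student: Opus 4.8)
The plan is to deduce the explicit formula from the universal wall-crossing result \cite[Theorem 2]{feyz:rank-r-dt-theory-from-1}, upgrading it to closed form using the Picard rank one hypothesis. First I would record the arithmetic of the framing. Since $\ch(\cO_X(-n)) = e^{-nH} = (1,-nH,\tfrac{n^2H^2}{2},-\tfrac{n^3H^3}{6})$ and $\v = (0,kH,\beta,m)$, the cone of a section $s\colon \cO_X(-n)\to F$ has class
$$
\v_n = \v - e^{-nH} = \left(-1,\ (n+k)H,\ \beta-\tfrac{n^2H^2}{2},\ m+\tfrac{n^3H^3}{6}\right),
$$
so the monomial $x^{n+k}y^{\beta-n^2H^2/2}z^{m+n^3H^3/6}$ whose coefficient we extract is exactly the one tracking $(\ch_1,\ch_2,\ch_3)(\v_n)$ in the conventions of Section \ref{js section}. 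For $n\gg 0$ one has $\Hom(\cO_X(-n),F) = \C^{\chi(\cO_X(-n),\v)}$ with higher $\Ext$'s vanishing, so the Joyce--Song framed (pair) invariant counting sections $\cO_X(-n)\to F$ with $F$ an $H$-Gieseker semistable sheaf of class $\v$ equals $(-1)^{\chi(\cO_X(-n),\v)-1}\chi(\cO_X(-n),\v)\J(\v)$. This is exactly the reciprocal of the scalar prefactor in the statement (note $(-1)^{\chi+1}=(-1)^{\chi-1}$), so it suffices to prove that the coefficient of the $\v_n$-monomial in $A(\v_n,\mu_{\v,n})$ equals this framed invariant.

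Second, I would run the wall-crossing for the class $\v_n$ in the family of weak stability conditions $\nu_{b,w}$ of Bayer--Macr\`i--Toda, as in Method II, now reading the result through the slope-threshold $\mu$ in $A(\v_n,\mu)$. For $\mu$ very large the product in $A(\v_n,\mu)$ is empty and $A(\v_n,\mu) = \pt^{\v,n}$ records the contribution of the pure rank $-1$ PT-type objects; as $\mu$ decreases, each wall one crosses is attached to a rank zero class $\al' = (0,k'H,\beta',m')$ that can appear as a Jordan--H\"older factor of a $\nu_{b,w}$-semistable object of class $\v_n$, and the Joyce--Song framed wall-crossing formula multiplies the generating series by $\exp\big((-1)^{\chi_{\al'}}\chi_{\al'}\J(\al')x^{k'}y^{\beta'}z^{m'}\big)$. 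That these crossings produce a product of exponentials, rather than a more complicated transformation, is forced by the antisymmetry $\chi(\al_1',\al_2') = -\chi(\al_2',\al_1')$ of the Euler pairing on a Calabi--Yau threefold, under which the Hall-algebra operators of the rank zero factors commute and hence exponentiate.

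Third, the Picard rank one hypothesis $\Pic(X) = \Z.H$ makes the chamber structure explicit: a rank zero class $\al'$ now has the single reduced slope $\beta'.H/(k'H^3) \in \Q$, so the walls are totally ordered and crossed in a definite sequence as $\mu$ decreases. I would then identify $\mu_{\v,n}$ as the threshold at which exactly the walls separating the pure PT chamber from the Joyce--Song framed chamber have been crossed, so that the product ranges over precisely the $\al'$ with $0 < k' < k$ and $\beta'.H/(k'H^3) > \mu_{\v,n}$ displayed in the statement; the strict bound $k' < k$ reflects that a destabilising rank zero factor must have smaller $\ch_1.H^2$, which is what renders the formula inductive. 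Finiteness of the product and the absence of spurious walls follow from the assumed Bogomolov--Gieseker inequality, which bounds $\ch_2$ of any semistable factor and so confines the $\al'$ with $\J(\al')\neq 0$ contributing to the designated coefficient to a finite set. Extracting the coefficient of the $\v_n$-monomial then returns the framed invariant $(-1)^{\chi(\cO_X(-n),\v)-1}\chi(\cO_X(-n),\v)\J(\v)$, and multiplication by the stated prefactor yields $\J(\v)$.

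The main obstacle is the exact determination of $\mu_{\v,n}$ together with the proof that the iterated wall-crossing collapses to the single clean product above. Two points require care. First, one must verify that across every wall only rank zero classes with $k' < k$ can occur -- equivalently that no rank $-1$ class re-enters as a Jordan--H\"older factor below the PT chamber -- so that the recursion is genuinely on $\ch_1.H^2$. Second, one must control the combinatorics of repeated crossings: the Joyce--Song transformation is a priori a sum over all ordered decompositions of $\v_n$, and one must show it resums to exactly $\pt^{\v,n}\cdot\prod\exp(\cdots)$. I expect both to rest on the Bogomolov--Gieseker bound together with the total ordering of slopes afforded by $\Pic(X) = \Z.H$; the induction then terminates at $k = 1$, where the product is empty and $\J(\v)$ is expressed purely through PT invariants.
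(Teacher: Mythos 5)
There is a genuine gap: your proposal is missing the mechanism that actually closes the argument, namely the vanishing of $\J_{b,w}(\v_n)$ below the Bogomolov--Gieseker line. In the paper, $\mu_{\v,n}$ is the gradient of a line $\ell_f^-$ through $\Pi(\v_n)$ lying just \emph{below} the final line $\ell_f$ on which $B_{b,w}(\v_n)=0$; one crosses \emph{all} walls from the large-volume (PT) chamber, past the Joyce--Song wall, down to $\ell_f^-$, where $\J_{b,w}(\v_n)=0$ by Conjecture \ref{conjecture}, and Theorem \ref{thm-rank0-n} is precisely the resulting identity $0 = (-1)^{\chi(\cO_X(-n),\v)}\chi(\cO_X(-n),\v)\,\J(\v) + \bigl[\text{coefficient in } A(\v_n,\mu(\ell_f^-))\bigr]$ (Theorem \ref{thm-rak--n-section 5}). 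You instead anchor the computation at the Joyce--Song chamber by asserting that the framed pair invariant equals $(-1)^{\chi(\cO_X(-n),\v)-1}\chi(\cO_X(-n),\v)\,\J(\v)$ and that the designated coefficient of $A(\v_n,\mu_{\v,n})$ equals this framed invariant. Both claims are false in general: Theorem \ref{thm-rank0-n}, unlike Theorem \ref{thm-rank zero-wall-crossing}(ii), does not exclude strictly Gieseker-semistable sheaves of class $\v$, and for, say, $\v=2\v_0$ the Joyce--Song pair invariant acquires corrections proportional to $\J(\v_0)^2$ coming from decompositions of $\v$ into equal-slope pieces. In the paper those corrections sit \emph{inside} $A(\v_n,\mu_{\v,n})$ --- since $\mu_{\v,n}$ lies strictly below the Joyce--Song wall gradient, the product includes classes $\al'$ of exactly that slope, accounting for splittings $\v_n=[\cO_X(-n)[1]]+\v_a+\v_b$ --- which is why the extracted coefficient equals $(-1)^{\chi+1}\chi\,\J(\v)$ on the nose. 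Your two intermediate identities are each off by these corrections; they cancel only by accident, and neither can be proved as stated. Relatedly, taking $\mu_{\v,n}$ at the ``PT-to-JS threshold'' is wrong: just below the Joyce--Song wall $\J_{b,w}(\v_n)$ is not yet zero (there are further walls down to $\ell_f$), so the coefficient of $A$ at that threshold does not compute $\J(\v)$.

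Two further justifications are also incorrect. The product-of-exponentials structure is not ``forced by antisymmetry of $\chi$'': antisymmetry holds on any Calabi--Yau threefold and does not make the relevant Hall-algebra operators commute. What the paper uses is the \emph{vanishing} $\chi(\al_i',\al_j')=0$ for rank zero classes with $\ch_1\in\Z H$ of equal $\nu_H$-slope --- this is exactly where $\Pic(X)=\Z.H$ enters (Lemma \ref{lem-rank0-tilt-j} and Step 3 of Proposition \ref{prop-wall-crossing-1}) --- together with the explicit evaluation of Joyce's $S$- and $U$-coefficients (Steps 1--2 of Proposition \ref{prop-wall-crossing-1}, which collapse the sum over digraphs to the factor $1/(q-1)!$). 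Classes of \emph{different} slopes do not pair to zero, which is why the exponents in Definition \ref{Def.A.alpha} are order-dependent, $\chi_{\al_i}=\chi\bigl(\al_i,\ \al-\sum_{2\le j<i}q_j\al_j\bigr)$, rather than fixed pairings with $\v_n$; your proposal treats them as fixed. Finally, your requirement that ``no rank $-1$ class re-enters as a Jordan--H\"older factor'' is backwards: every wall for $\v_n$ has exactly one rank $-1$ factor lying in $M_{\v,n}$, and that factor is itself destabilised at lower walls. The paper does not exclude this; it handles the recursion by the double induction on $\Delta_H$ and on walls in Proposition \ref{prop-safe}, organised by the auxiliary lines $L_\al$, which your sketch does not address.
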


\subsection*{Rank 2 case} Fix a rank 2 class $\w \in K(X)$ and $n \gg 0$. 
It has been shown in \cite{feyz:rank-r-dt-theory-from-0} that there exists a universal formula expressing $\J(\w)$ in terms of rank 1 DT invariant and rank zero DT invariants. By applying a similar argument as in Method II above, we prove in Section \ref{section.rank 2} that the formula can be made explicit when $\Pic(X) = \Z.H$, see Theorem \ref{thm-rank2}. Combining this with Theorem \ref{thm-rank0-n} gives an expression of rank 2 DT invariants in terms of rank 1 DT invariants and PT invariants.

\subsection*{Outlook}
The main wall-crossing arguments in this paper have been done in Section \ref{section.wall} which are valid for any smooth projective 3-fold satisfying the Bogomolov-Gieseker conjecture of \cite{bayer:bridgeland-stability-conditions-on-threefolds}. Applying Joyce's new wall-crossing formula for Fano 3-folds \cite[Theorem 7.69]{joyce:enumerative-invariants} should hopefully give a Fano version of Theorem \ref{thm-rank zero-wall-crossing} and Theorem \ref{thm-rank0-n} with insertions.  

In this paper, we consider arbitrary Calabi-Yau 3-folds and arbitrary rank 0 or 2 classes in $K(X)$. However, to get a more explicit formula we need to either 
\begin{enumerate*}
	\item restrict ourselves to special types of Calabi-Yau 3-folds like K3$\times$ E \cite{george-k3times-E} or a smooth intersection of quadratic and quartic hypersurfaces in $\PP^5$ \cite{qin:dt-certain}, or 
	\item consider special rank zero or 2 classes $\al \in K(X)$ like $\ch_0(\al)=2$ and $\ch_1(\al)=\ch_2(\al) = 0$ as discussed in \cite{stoppa:d0-d6,toda:rank-2-dt}.   
\end{enumerate*}

Our main technique for controlling walls of instability is the Bogomolov-Gieseker conjecture of \cite{bayer:bridgeland-stability-conditions-on-threefolds}. This is now proved for many 3-folds, including some Calabi-Yau 3-folds \cite{bayer:the-space-of-stability-conditions-on-abelian-threefolds,maciocia:fm-transform-ii}. It is proved for a restricted set of weak stability conditions on quintic threefold \cite{chunyi:stability-condition-quintic-threefold} and a complete intersection of quadratic and quartic hypersurfaces in $\PP^5$ \cite{liu:bg-ineqaulity-quadratic} which are sufficient for our purposes, see Lemma \ref{lem.quintic} and Remark \ref{rem.final}. A weaker version is also proved for double or triple cover CY3 \cite{koseki:double-triple-solids} which is enough for Theorem \ref{thm-rank0-n} and Theorem \ref{thm-rank2} but leads to a weaker version of Theorem \ref{thm-rank zero-wall-crossing}.

\subsection*{Acknowledgements}
I would like to thank Richard Thomas for many useful discussions. I am grateful for comments by Arend Bayer, Davesh Maulik and Yukinobu Toda. The author was supported by Marie Sk\l odowska-Curie individual fellowships 887438 and EPSRC postdoctoral fellowship EP/T018658/1.
\setcounter{tocdepth}{1}
\tableofcontents\vspace{-8mm}

\section{Weak stability conditions \& wall-crossing formula}\label{section weak}
Let $(X, \cO_X(1))$ be a smooth polarised complex projective threefold with bounded derived category of coherent sheaves $\cD(X)$ and Grothendieck group $K(\mathrm{Coh}(X))$. Dividing by the kernel of the Mukai pairing gives the numerical Grothendieck group
\beq{Kdef}
K(X)\ :=\ \frac{K(\mathrm{Coh}(X))}{\ker\chi(\ \ ,\ \ )}\,.
\eeq
Notice $K(X)$ is torsion-free, isomorphic to its image in $H^*(X,\Q)$ under the Chern character. Denoting $H=c_1(\cO_X(1))$, for any $v\in K(X)$ we set
\beqa
\ch\_H(v) &:=& \Big(\!\ch_0(v),\ \tfrac1{H^3}\ch_1(v).H^2,\ \tfrac1{H^3}\ch_2(v).H,\ \tfrac1{H^3}\ch_3(v)\;\!\Big)\ \in\ \Q^4,\\
\ch_H^{\le2}(v) &:=& \Big(\!\ch_0(v),\ \tfrac1{H^3}\ch_1(v).H^2,\ \tfrac1{H^3}\ch_2(v).H\;\!\Big)\ \in\ \Q^3.
\eeqa

We define the $\mu\_H$-slope of a coherent sheaf $E$ to be
$$
\mu\_H(E)\ :=\ \left\{\!\!\!\begin{array}{cc} \frac{\ch_1(E).H^2}{\ch_0(E)H^3} & \text{if }\ch_0(E)\ne0, \\
+\infty & \text{if }\ch_0(E)=0. \end{array}\right.
$$
Associated to this slope every sheaf $E$ has a Harder-Narasimhan filtration. Its graded pieces have slopes whose maximum we denote by $\mu_H^+(E)$ and minimum by $\mu_H^-(E)$.

For any $b \in \mathbb{R}$, let $\cA_{\;b}\subset\cD(X)$ denote the abelian category of complexes
\begin{equation}\label{Abdef}
\Ab\ =\ \big\{E^{-1} \xrightarrow{\ d\ } E^0 \ \colon\ \mu_H^{+}(\ker d) \leq b \,,\  \mu_H^{-}(\cok d) > b \big\}. 
\end{equation}
In particular, setting $\ch^{bH}(E):=\ch(E)e^{-bH}$, each $E\in\cA_{\;b}$ satisfies
\beq{65}
\ch_1^{bH}(E).H^2\=\ch_1(E).H^2-bH^3\ch_0(E) \ge\ 0.
\eeq
By \cite[Lemma 6.1]{bridgeland:K3-surfaces} $\cA_{\;b}$ is the heart of a bounded t-structure on $\cD(X)$. We denote its positive cone by
\beq{CAb}
C(\Ab)\ :=\ \Big\{\sum\nolimits_ia_i[E_i]\ \colon\ a_i\,\in\,\N,\ E_i\,\in\,\Ab\Big\}\ \subset\ K(X).
\eeq
For any $w>\frac12b^2$, we have on $\cA_{\;b}$ the slope function
\begin{equation}\label{noo}
\nubw(E)\ =\ \left\{\!\!\!\begin{array}{cc} \frac{\ch_2(E).H - w\ch_0(E)H^3}{\ch_1^{bH}(E).H^2}
& \text{if }\ch_1^{bH}(E).H^2\ne0, \\
+\infty & \text{if }\ch_1^{bH}(E).H^2=0. \end{array}\right.
\end{equation}
By \cite{bayer:bridgeland-stability-conditions-on-threefolds}\footnote{We use notation from \cite{feyz:curve-counting}; in particular the rescaling \cite[Equation 6]{feyz:curve-counting} of \cite{bayer:bridgeland-stability-conditions-on-threefolds}'s slope function.} $\nubw$ defines a Harder--Narasimhan filtration on $\cA_{\;b}$, and so a \emph{weak stability condition} on $\cD(X)$.

\begin{Def}
	Fix $w>\frac12b^2$. Given an injection $F\into E$ in $\cA_{\;b}$ we call $F$ a \emph{destabilising subobject of} $E$ if and only if
	\beq{seesaw1}
	\nubw(F)\ \ge\ \nubw(E/F),
	\eeq
	and \emph{strictly destabilising} if $>$ holds. 
	We say $E\in\cD(X)$ is $\nubw$-(semi)stable if and only if
	\begin{itemize}
		\item $E[k]\in\cA_{\;b}$ for some $k\in\Z$, and
		\item 
		$E[k]$ contains no (strictly) destabilising subobjects.
	\end{itemize}
\end{Def}


By \cite[Theorem 3.5]{bayer:the-space-of-stability-conditions-on-abelian-threefolds} any $\nubw$-semistable object $E \in \mathcal{D}(X)$ satisfies
\beq{BOG}
\Delta_H(E)\ :=\ \left(\ch_1(E).H^2\right)^2 -2(\ch_2(E).H)\ch_0(E)H^3\ \geq\ 0.
\eeq
Therefore, if we plot the $(b,w)$-plane simultaneously with the image of the projection map
\begin{eqnarray*}
	\Pi\colon\ K(X) \smallsetminus \big\{E \colon \ch_0(E) = 0\big\}\! &\longrightarrow& \R^2, \\
	E &\ensuremath{\shortmid\joinrel\relbar\joinrel\rightarrow}& \!\!\bigg(\frac{\ch_1(E).H^2}{\ch_0(E)H^3}\,,\, \frac{\ch_2(E).H}{\ch_0(E)H^3}\bigg),
\end{eqnarray*}
as in Figure \ref{projetcion}, then $\nubw$-semistable objects $E$ lie outside the
open set
\begin{equation}\label{Udef}
U\ :=\ \Big\{(b,w) \in \mathbb{R}^2 \colon w > \tfrac12b^2  \Big\}
\end{equation}
while $(b,w)$ lies inside $U$. The slope $\nubw(E)$ of $E$ is the gradient of the line connecting $(b,w)$ to $\Pi(E)$ (or $\ch_2(E).H\big/\!\ch_1(E).H^2$ if $\rk(E)=0$).

\begin{figure}[h]
	\begin{centering}
		\definecolor{zzttqq}{rgb}{0.27,0.27,0.27}
		\definecolor{qqqqff}{rgb}{0.33,0.33,0.33}
		\definecolor{uququq}{rgb}{0.25,0.25,0.25}
		\definecolor{xdxdff}{rgb}{0.66,0.66,0.66}
		
		\begin{tikzpicture}[line cap=round,line join=round,>=triangle 45,x=1.0cm,y=0.9cm]
		
		\draw[->,color=black] (-4,0) -- (4,0);
		\draw  (4, 0) node [right ] {$b,\,\frac{\ch_1\!.\;H^2}{\ch_0H^3}$};


		\fill [fill=gray!40!white] (0,0) parabola (3,4) parabola [bend at end] (-3,4) parabola [bend at end] (0,0);
		
		\draw  (0,0) parabola (3.1,4.27); 
		\draw  (0,0) parabola (-3.1,4.27); 
		\draw  (3.8 , 3.6) node [above] {$w= \frac{b^2}{2}$};

		\draw[->,color=black] (0,-.8) -- (0,4.7);
		\draw  (1, 4.1) node [above ] {$w,\,\frac{\ch_2\!.\;H}{\ch_0H^3}$};
		
		\draw [dashed, color=black] (-2.3,1.5) -- (-2.3,0);
		\draw [dashed, color=black] (-2.3, 1.5) -- (0, 1.5);
		\draw [color=black] (-2.6, 1.36) -- (1.3, 3.14);
		
		\draw  (-2.8, 1.8) node {$\Pi(E)$};
		\draw  (-1, 3) node [above] {\Large{$U$}};
		\draw  (0, 1.5) node [right] {$\frac{\ch_2(E).H}{\ch_0(E)H^3}$};
		\draw  (-2.3 , 0) node [below] {$\frac{\ch_1(E).H^2}{\ch_0(E)H^3}$};
		\begin{scriptsize}
		\fill (0, 1.5) circle (2pt);
		\fill (-2.3,0) circle (2pt);
		\fill (-2.3,1.5) circle (2pt);
		\fill (1,3) circle (2pt);
		\draw  (1.2, 2.96) node [below] {$(b,w)$};
		
		\end{scriptsize}
		
		\end{tikzpicture}
		
		\caption{$(b,w)$-plane and the projection $\Pi(E)$ when $\ch_0(E)<0$}
		
		\label{projetcion}
		
	\end{centering}
\end{figure}
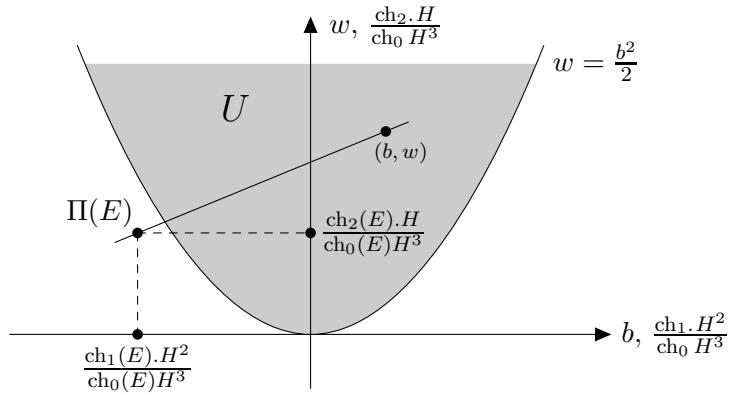

Objects in $\cD(X)$ give the space of weak stability conditions a wall and chamber structure by \cite[Proposition 12.5]{bayer:the-space-of-stability-conditions-on-abelian-threefolds}, as rephrased in \cite[Proposition 4.1]{feyz:thomas-noether-loci} for instance.


\begin{Prop}[\textbf{Wall and chamber structure}]\label{prop. locally finite set of walls}
	Fix $v\in K(X)$ with $\Delta_H(v)\ge0$ and $\ch_H^{\le2}(v)\ne0$. There exists a set of lines $\{\ell_i\}_{i \in I}$ in $\mathbb{R}^2$ such that the segments $\ell_i\cap U$ (called ``\emph{walls of instability}") are locally finite and satisfy 
	\begin{itemize*}
		\item[\emph{(}a\emph{)}] If $\ch_0(v)\ne0$ then all lines $\ell_i$ pass through $\Pi(v)$.
		\item[\emph{(}b\emph{)}] If $\ch_0(v)=0$ then all lines $\ell_i$ are parallel of slope $\frac{\ch_2(v).H}{\ch_1(v).H^2}$.
		\item[\emph{(}c\emph{)}] The $\nubw$-(semi)stability of any $E\in\cD(X)$ of class $v$ is unchanged as $(b,w)$ varies within any connected component (called a ``\emph{chamber}") of $U \smallsetminus \bigcup_{i \in I}\ell_i$.
		\item[\emph{(}d\emph{)}] For any wall $\ell_i\cap U$ there is a map $f\colon F\to E$ in $\cD(X)$ such that
		\begin{itemize}
			\item for any $(b,w) \in \ell_i \cap U$, the objects $E,\,F$ lie in the heart $\cA_{\;b}$,
			\item $E$ is $\nubw$-semistable of class $v$ with $\nubw(E)=\nubw(F)=\,\mathrm{slope}\,(\ell_i)$ constant on the wall $\ell_i \cap U$, and
			\item $f$ is an injection $F\into E $ in $\cA_{\;b}$ which strictly destabilises $E$ for $(b,w)$ in one of the two chambers adjacent to the wall $\ell_i$.
			\hfill$\square$
			
		\end{itemize} 
	\end{itemize*} 
\end{Prop}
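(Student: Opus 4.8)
The plan is to reduce the statement to a linear-algebra computation in the $(b,w)$-plane together with a finiteness statement controlled by the Bogomolov--Gieseker inequality \eqref{BOG}.

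First I would make the destabilising condition numerical. For a class $a \in K(X)$ abbreviate $r_a = \ch_0(a)H^3$, $c_a = \ch_1(a).H^2$ and $d_a = \ch_2(a).H$, so that $\nubw(a) = (d_a - w\,r_a)/(c_a - b\,r_a)$. A potential wall for $v$ arises from a subobject (or quotient) of numerical class $u$ with $\nubw(u) = \nubw(v)$; clearing denominators, this becomes the single relation
\begin{equation*}
(d_u c_v - d_v c_u) + b\,(d_v r_u - d_u r_v) + w\,(r_v c_u - r_u c_v) = 0,
\end{equation*}
which is \emph{linear} in $(b,w)$ and hence cuts out a line $\ell_u$ depending only on $\ch_H^{\le2}(u)$ and $\ch_H^{\le2}(v)$. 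Substituting $(b,w) = \Pi(v) = (c_v/r_v,\, d_v/r_v)$ when $\ch_0(v) \neq 0$ makes the left-hand side vanish identically, which proves (a); when $\ch_0(v) = 0$ the relation collapses to $(d_u c_v - d_v c_u) + b\,d_v r_u - w\,r_u c_v = 0$, a line of fixed slope $d_v/c_v = \ch_2(v).H/\ch_1(v).H^2$ independent of $u$, which proves (b). The hypothesis $\ch_H^{\le2}(v)\neq0$ is exactly what guarantees $\nubw(v)$ is defined and the lines are genuine.

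The hard part will be local finiteness, i.e.\ that only finitely many of the lines $\ell_u$ meet a fixed compact $K \subset U$. A wall meeting $K$ is realised by a short exact sequence $0 \to F \to E \to Q \to 0$ in $\Ab$ with $E$ of class $v$ and $\nubw(F) = \nubw(E)$ on $\ell_u \cap K$. Replacing $F$ and $Q$ by their Jordan--H\"older factors I may assume them $\nubw$-semistable, so \eqref{BOG} gives $\Delta_H(F) \ge 0$ and $\Delta_H(Q) \ge 0$. The inclusion in $\Ab$ forces $0 \le \ch_1^{bH}(F).H^2 \le \ch_1^{bH}(E).H^2$ by \eqref{65}, which bounds $c_F = \ch_1(F).H^2$ on $K$; feeding this together with the two Bogomolov--Gieseker inequalities into the support-property argument of \cite[Section 12]{bayer:the-space-of-stability-conditions-on-abelian-threefolds} confines $\ch_H^{\le2}(F)$ to a bounded region of a fixed lattice, hence to finitely many values, each contributing one line $\ell_u$. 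This is the step where the Bogomolov--Gieseker conjecture is essential: without the support property the discriminants, and so the classes $u$, would be unbounded.

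Once finiteness is in hand, (c) and (d) follow formally. In a chamber no line $\ell_u$ is crossed, so none of the finitely many candidate classes can satisfy $\nubw(u) = \nubw(v)$, and by continuity of $\nubw$ the Harder--Narasimhan filtration of any object of class $v$ is locally constant there, giving (c). For (d), the class defining a given wall $\ell_i$ is realised by an honest injection $F \into E$ in $\Ab$ with $\nubw(F) = \nubw(E)$ along the wall, and crossing into the adjacent chamber where the linear form above changes sign turns this equality into a strict destabilising inequality. This recovers the wall-and-chamber structure of \cite[Proposition 12.5]{bayer:the-space-of-stability-conditions-on-abelian-threefolds} in the reformulation of \cite[Proposition 4.1]{feyz:thomas-noether-loci}.
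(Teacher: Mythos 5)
The paper does not prove this proposition at all: it is quoted directly from \cite[Proposition 12.5]{bayer:the-space-of-stability-conditions-on-abelian-threefolds}, as rephrased in \cite[Proposition 4.1]{feyz:thomas-noether-loci}, which is why the statement already carries a $\square$. Your sketch reconstructs the standard argument behind that citation, and much of it is sound: reducing the equal-slope condition $\nubw(u)=\nubw(v)$ to a relation linear in $(b,w)$ is correct, and your verifications of (a) (substituting $\Pi(v)$ kills the linear form identically) and (b) (setting $\ch_0(v)=0$ forces slope $\ch_2(v).H/\ch_1(v).H^2$ independently of $u$) are exactly right. But for the one genuinely hard step, local finiteness, you feed the problem back into ``the support-property argument of \cite[Section 12]{bayer:the-space-of-stability-conditions-on-abelian-threefolds}'' --- the very reference the paper quotes --- so your proposal is best described as an annotated version of the cited proof rather than an independent one.

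There is also a genuine error of understanding, worth correcting because the paper's architecture depends on it: you claim ``this is the step where the Bogomolov--Gieseker conjecture is essential.'' It is not. The finiteness argument uses only \eqref{BOG}, the inequality $\Delta_H\ge 0$ for $\nubw$-semistable objects, which is the \emph{unconditional} theorem \cite[Theorem 3.5]{bayer:the-space-of-stability-conditions-on-abelian-threefolds}; it is what gives $\nubw$ a support property on every smooth polarised threefold. The conjectural statement of the paper, Conjecture \ref{conjecture}, is the strictly stronger inequality $B_{b,w}(E)\ge0$ involving $\ch_3$, and it enters only later --- to produce the final line $\ell_f$ in Proposition \ref{prop.destabilising} and via \cite[Proposition 8.3]{feyz:thomas-noether-loci} in Proposition \ref{prop.calsses of factors}. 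Proposition \ref{prop. locally finite set of walls} is deliberately stated \emph{before} the conjecture precisely because the wall-and-chamber structure holds unconditionally; the inputs you actually invoke ($\Delta_H(F)\ge0$, $\Delta_H(Q)\ge0$ from \eqref{BOG}) are the correct unconditional ones, so only your interpretive sentence is wrong, but it misstates what is conjectural in this subject. Two smaller gaps: the bound $0\le\ch_1^{bH}(F).H^2\le\ch_1^{bH}(E).H^2$ controls the combination $\ch_1(F).H^2-b\,\ch_0(F)H^3$, not $\ch_1(F).H^2$ itself as your parenthetical claims --- genuine finiteness of the classes $\ch_H^{\le2}(F)$ needs this played off against both discriminant inequalities and the equal-slope condition, which is exactly the content of the lemma you cite rather than something you derive; and in (d) you do not address why a single map $f\colon F\to E$, with both objects lying in $\cA_{\;b}$, serves for the \emph{entire} segment $\ell_i\cap U$ rather than merely near one point of the wall.
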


In this paper, we always assume $X$ satisfies the conjectural Bogomolov-Gieseker inequality of Bayer-Macr\`i-Toda \cite{bayer:bridgeland-stability-conditions-on-threefolds}.  In the form of \cite[Conjecture 4.1]{bayer:the-space-of-stability-conditions-on-abelian-threefolds}, rephrased in terms of the rescaling \cite[Equation 6]{feyz:curve-counting}, it is the following.

\begin{Con}[\textbf{Bogomolov-Gieseker inequality}]\label{conjecture}
	For any $(b,w)\in U$ and $\nubw$-semistable $E\in\cD(X)$, we have the inequality
	\begin{equation}\label{quadratic form}
	B_{b, w}(E)\ :=\ (2w-b^2)\Delta_H(E) + 4\big(\!\ch_2^{bH}(E).H\big)^2 -6\(\!\ch_1^{bH}(E).H^2\)\ch_3^{bH}(E)\ \geq\ 0.
	\end{equation}
\end{Con}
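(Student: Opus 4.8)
The inequality \eqref{quadratic form} is the central conjectural hypothesis of the paper rather than a consequence of the preceding material, which supplies only the wall-and-chamber formalism of Proposition \ref{prop. locally finite set of walls} and the weaker support inequality \eqref{BOG}. Accordingly the plan is not to derive it from those results, but to follow the Bayer--Macr\`i--Toda strategy, reducing the cubic positivity statement on tilt-stable complexes to the classical Bogomolov--Gieseker inequality on surfaces; this is what makes \eqref{quadratic form} accessible for the special threefolds (abelian threefolds, the quintic, certain complete intersections) for which the paper's hypothesis is known to hold.

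First I would use the local finiteness of walls from Proposition \ref{prop. locally finite set of walls}, together with the support property \eqref{BOG}, to reduce \eqref{quadratic form} to a single boundary case. Along a fixed numerical wall the sign of $B_{b,w}(E)$ can be tracked as $(b,w)$ moves, and a standard argument shows that it suffices to verify the inequality for $\nubw$-stable $E$ on the locus $\nubw(E)=0$, i.e. with $\ch_2(E).H=w\,\ch_0(E)H^3$. On this locus the first two terms of $B_{b,w}(E)$ are determined by the class and by $(b,w)$, so that \eqref{quadratic form} becomes an explicit upper bound on the normalised third Chern number $\ch_3^{bH}(E)$.

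Next, since tensoring by $\cO_X(1)$ shifts $b$ by an integer and the derived dual replaces $b$ by $-b$, I would normalise $b$ to a convenient range and arrange $E$ to be a genuinely tilt-stable object of extremal tilt-slope. The geometric heart of the argument is then a restriction step: for a general smooth surface $S\in|mH|$ with $m\gg0$ the restriction $E|_S$ is (twisted) Gieseker semistable, and the classical Bogomolov--Gieseker inequality on $S$, combined with the Hodge index theorem for $H$ on $S$, controls $\ch_2^{bH}(E).H$ and thereby feeds a bound on $\ch_3^{bH}(E)$. Matching this bound against the coefficients $2w-b^2$, $4$ and $6$ appearing in \eqref{quadratic form} recovers the desired inequality.

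The hard part will be exactly this last control of $\ch_3$: passing from surface-level data to the three-dimensional invariant $\ch_3^{bH}(E)$ is not formal and requires genuinely global geometric input about $X$, which is why \eqref{quadratic form} is established only case by case and remains open for a general Calabi--Yau threefold. For this reason the inequality is imposed as a hypothesis on $X$ throughout, and the sole role of the surrounding wall-and-chamber formalism is to isolate \eqref{quadratic form} as the single positivity statement on which all the later wall-crossing computations depend.
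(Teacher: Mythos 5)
Your reading of the statement's status is exactly right: in the paper \eqref{quadratic form} is a standing \emph{conjecture} (Conjecture \ref{conjecture}), assumed as a hypothesis on $X$ throughout, and the paper gives no proof of it --- it only cites the special threefolds where it (or a restricted version, as in Lemma \ref{lem.quintic}) is known. Since you correctly identify this and do not claim to derive it from the paper's wall-and-chamber material, your treatment agrees with the paper's; the sketched Bayer--Macr\`i--Toda reduction strategy is just contextual background and carries no logical weight here.
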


Multiplying out and cancelling we find that $B_{b, w}$ is actually linear in $(b,w)$:
\beq{boglinear}
\tfrac12B_{b, w}(E)\=\(C_1^2-2C_0C_2\)w+\(3C_0C_3-C_1C_2\)b+(2C_2^2-3C_1C_3),
\eeq
where $C_i:=\ch_i(E).H^{3-i}$.
The coefficient of $w$ is $\ge0$ by \eqref{BOG}. When it is $>0$ the Bogomolov-Gieseker inequality \eqref{quadratic form} says that $E$ can be $\nubw$-semistable only above the line $\ell_f(E)$ defined by the equation $B_{b, w}(E)=0$. When $\ch_0(E)\neq0\neq\ch_1(E).H^2$ we can rearrange to see $\ell_f(E)$ is the line through the points $\Pi(E)$ and 
\begin{equation}\label{pi'}
\Pi'(E)\ :=\, \left(\frac{2\ch_2(E).H}{\ch_1(E).H^2}\,, \ \frac{3\ch_3(E)}{\ch_1(E).H^2}\right).    
\end{equation}

\subsection*{Tilt (Gieseker) stability}
Given a sheaf of class $\alpha\in K(X)$ with Hilbert polynomial 
$$
P_\alpha(t)\ :=\ \chi\(\alpha(t)\)\=a_dt^d+a_{d-1}t^{d-1}+\dots+a_0,
$$
where $d\le3$ and $a_d\ne0$. Its \emph{reduced} Hilbert polynomial is
$$
p_\alpha(t)\ :=\ \frac{P_\alpha(t)}{a_d}\,.
$$
Following Joyce \cite[Section 4.4]{joyce:configurations-iii} we introduce a total order $\prec$ on $\{$monic polynomials$\}\sqcup\{0\}$ by saying $p\prec q$ if and only if
\begin{itemize}
	\item[(i)] deg$\,p>\ $deg$\,q$, or
	\item[(ii)] deg$\,p=\,\;$deg$\,q$ and $p(t)<q(t)$ for $t\gg0$.
\end{itemize}
Our convention is that $\deg0=0$. Then $E\in\mathrm{Coh}\;(X)$ is called Gieseker (semi)stable if for all exact sequences $0\to A\to E\to B\to0$ in Coh$\;(X)$ we have
$$
p\_{\;[A]}\ \,(\preceq)\,\ p\_{\;[B]}.
$$
Here $(\preceq)$ means $\prec$ for stability and $\preceq$ (which is $\prec$ or $=$) for semistability. In particular (i) ensures that Gieseker semistable sheaves are pure. 

Discarding the constant term of the Hilbert polynomial before dividing by its top coefficient as before gives
\beq{tidef}
\widetilde p_\alpha(t)\ :=\ p_\alpha(t)-\frac{a_0}{a_d}\=
t^d+\dots+\frac{a_1}{a_d}t.
\eeq
It depends only on $\ch_H^{\leq 2}(\alpha)$. Then \emph{tilt (semi)stability} on Coh$(X)$ is defined by the inequalities
$$
\widetilde p\_{\;[A]}\ \,(\preceq)\,\ \widetilde p\_{\;[B]}
$$
for all exact sequences of sheaves $0\to A\to E\to B\to0$. 

\subsection*{Large volume limit} We can improve on the local finiteness of walls by showing we have finiteness as $w\to\infty$. This gives, for each fixed $v\in K(X)$, a \emph{large volume chamber} $\subseteq U$ in which there are no walls for $v$, so the $\nubw$-(semi)stability of objects of class $v$ is independent of $w\gg0$, see for instance \cite[Proposition 1.3]{feyz:rank-r-dt-theory-from-1}. Moreover, large volume stability coincides with classical stability.  

\begin{Lem}\label{lem.large volume limit}
	Take an object $E \in \cA_b$ for some $b \in \mathbb{R}$.  
	\begin{enumerate*}
		\item Suppose $\ch_0(E) \geq 0$ and $b < \mu_H(E)$.
		Then $E$ is $\nu\_{b,w}$-(semi)stable for $w \gg 0$ if and only if $E$ is a tilt-(semi)stable sheaf. 
		\item If $\ch_0(E) = -1$ and $\mu_H(E) < b$, then $E$ is $\nu_{b,w}$-semistable for $w \gg 0$ if and only $E^{\vee} \otimes (\det(E))^{-1}[1]$ is a stable pair, i.e. isomorphic to a 2-term complex $\cO_X \xrightarrow{s} F$ in $\cD(X)$ (with $\cO_X$ in degree 0) such that  
		\begin{itemize}
			\item $F$ is a \emph{pure} 1-dimensional sheaf, and 
			\item $s \colon \cO_X \rightarrow F$ has zero-dimensional cokernel. 
		\end{itemize}
		In particular, in this case $E$ is $\nu_{b,w}$-semistable for $w \gg 0$ if and only if it is $\nu_{b, w}$-stable.   
	\end{enumerate*}
\end{Lem}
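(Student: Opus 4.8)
The plan is to study the asymptotics of the slope $\nubw$ from \eqref{noo} as $w\to\infty$, exploiting the large volume chamber: by the finiteness of walls as $w\to\infty$ recorded just before the lemma, for the fixed class $v=\ch(E)$ there are no walls for $w\gg0$, so the $\nubw$-(semi)stability of objects of class $v$ is independent of $w\gg0$. Writing $\ch_1^{bH}(E).H^2=\ch_0(E)H^3\big(\mu_H(E)-b\big)$, I note that in part (1) the hypotheses $\ch_0(E)\ge0$ and $b<\mu_H(E)$ make this positive (or, when $\ch_0(E)=0$, equal to $\ch_1(E).H^2>0$), so $\nubw(E)$ is finite with nonpositive coefficient of $w$; in part (2) the hypotheses $\ch_0(E)=-1<0$ and $\mu_H(E)<b$ again make it positive, but now the coefficient of $w$ is positive and $\nubw(E)\to+\infty$.

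For part (1) I would first show that any $\nubw$-semistable $E$ (for $w\gg0$) is a sheaf. If $\cH^{-1}(E)\ne0$ then $\cH^{-1}(E)[1]\into E$ is a subobject of $\cA_b$ with $\ch_0<0$, so its $\nubw$-slope tends to $+\infty$ (the coefficient of $w$ being $-\ch_0 H^3\big/\ch_1^{bH}.H^2>0$); since $\ch_0(E)\ge0$ keeps $\nubw(E)$ bounded above, this destabilises $E$ for $w\gg0$, a contradiction, and hence $E=\cH^0(E)$ is a sheaf. I would then match $\nubw$-stability with tilt stability: for a sheaf sequence $0\to A\to E\to B\to0$, comparing $\nubw(A)$ and $\nubw(B)$ as $w\to\infty$ is governed first by their $\mu_H$-slopes (the coefficient of $w$) and, when these agree, by the $\ch_2.H$ data (the constant term); this realises exactly the order $\prec$ on the reduced polynomials $\widetilde p$, which depend only on $\ch_H^{\le2}$. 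The converse follows by reversing the same estimates, using that a tilt-semistable sheaf is slope-semistable, hence torsion-free with $\mu_H^-(E)=\mu_H(E)>b$, so that $E\in\cA_b$.

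For part (2) I would pass to the derived dual $P:=E^\vee\otimes\det(E)^{-1}[1]$, which has $\ch_0(P)=1$, and use the compatibility of the dualising functor with $\nubw$-stability (exchanging $\cA_b$ with $\cA_{-b}$ and sending $\nubw$-semistable objects to $\nu_{-b,w}$-semistable ones), reducing the problem to a rank-$1$ object. Reading off the cohomology sheaves of $P$ from the long exact sequence, I would represent $P$ by a two-term complex $\cO_X\xrightarrow{s}F$ and then extract the two PT conditions from $\nu_{-b,w}$-semistability for $w\gg0$: purity of $F$ (a $0$-dimensional subsheaf of $F$ yields a quotient whose limiting slope is too small, destabilising $P$) and $0$-dimensionality of $\cok(s)$ (a $1$-dimensional part of $\cok(s)$ splits off a subobject of larger limiting slope). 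The converse reverses these inequalities, and running the strict (stability) version of the same slope estimates shows that no object of this class is strictly semistable, which gives the final assertion that $\nubw$-semistable $=$ $\nubw$-stable.

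The main obstacle in part (1) is the dictionary between $\cA_b$-subobjects and honest subsheaves — a subsheaf of $E$ need not lie in $\cA_b$, and an $\cA_b$-subobject need not be a subsheaf — which I would handle with the cohomology long exact sequence, checking that the resulting discrepancies are supported in higher codimension and hence invisible to the two leading orders of the $w\to\infty$ comparison. In part (2) the crux is establishing the duality compatibility with $\nubw$-stability and then reading both PT conditions off limiting slope inequalities, since the purity of $F$ and the vanishing of the one-dimensional part of $\cok(s)$ each amount to excluding a different type of destabilising sub- or quotient object.
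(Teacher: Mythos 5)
Your part (1) is essentially sound: it reconstructs the argument of Bridgeland's Proposition 14.2 (matching the two leading orders of $\nubw$ as $w\to\infty$ with the order $\prec$ on the truncated polynomials $\widetilde p$, plus the bookkeeping between $\cA_b$-subobjects and subsheaves), and this is exactly the route the paper takes, since its proof of part (a) is a citation to that proposition. One small inaccuracy: when $\ch_0(E)=0$ you assert $\ch_1(E).H^2>0$, which fails for sheaves supported in codimension $\geq 2$; that corner case has $\nubw\equiv+\infty$ and needs a separate remark, though it is harmless for the classes used later in the paper.

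The genuine gap is in part (2), in the step ``use the compatibility of the dualising functor with $\nubw$-stability \dots sending $\nubw$-semistable objects to $\nu_{-b,w}$-semistable ones.'' That clean statement is false, and in fact it is inconsistent with the lemma you are proving. Suppose it held. The object $P=E^{\vee}\otimes(\det E)^{-1}[1]$ has $\ch(P)=(1,0,\dots)$, and since twisting by a line bundle only translates the parameter $b$, the dual-side parameter lands exactly in the hypotheses of part (1): rank $+1$ and $b'<\mu_H(P)$. Your part (1) would then force $P$ to be a tilt-semistable \emph{sheaf} for $w\gg0$, i.e.\ (being rank one, torsion-free, with trivial determinant) a twist of an ideal sheaf. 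But a stable pair $[\cO_X\xrightarrow{s}F]$ with $\cok(s)\neq0$ has $\cH^1\neq0$ and so is not a sheaf; the clean duality claim would therefore erase precisely the PT pairs that part (2) is designed to produce. What is actually true (Proposition 5.1.3 of Bayer--Macr\`i--Toda, which underlies the sources the paper cites for this part, namely Toda's Section 3 and Feyzbakhsh's Lemmas A.2--A.3) is that derived duality preserves tilt-semistability only \emph{up to a $0$-dimensional sheaf}: the dual of a semistable object is semistable after removing a $0$-dimensional piece, and that discrepancy is exactly $\cok(s)$. In other words, the entire content of part (2) --- why the dual is a pair with pure $F$ and $0$-dimensional cokernel, rather than an ideal sheaf --- is concentrated in the failure of the statement you take as input, so the argument as written cannot be completed; it must be replaced either by the duality-with-correction statement and a careful tracking of the $0$-dimensional pieces, or by a direct analysis of $\cH^{-1}(E)$ and $\cH^0(E)$ of the rank $-1$ object itself, which is what the paper's cited references do.
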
  
\begin{proof}
	Part (a) follows by the same argument as in \cite[Proposition 14.2]{bridgeland:K3-surfaces}. Part (b) is proved in \cite[Section 3]{toda:bogomolov-counting} and \cite[Lemma A.2 and Lemma A.3]{feyz:rank-r-dt-theory-from-1}.  
\end{proof}

\subsection*{Wall-crossing formula}\label{wcross} Now assume $X$ is a Calabi-Yau 3-fold, i.e. $K_X\cong\cO_X$ and $h^1(X, \cO_X) =0$. For any two objects $E, F \in \cD(X)$, the Hirzebruch--Riemann--Roch Theorem implies that the Euler form $\chi([E], [F]) = \sum_{i} (-1)^i\dim_{\mathbb{C}}\Ext^i(E, F)$ is given in terms of their Chern characters by
\begin{align*}
\chi([E_1], [E_2]) =\ & \ch_0(E_1)\ch_3(E_2) - \ch_0(E_2)\ch_3(E_1) + \ch_1(E_2)\ch_2(E_1) -\ch_1(E_1)\ch_2(E_2)\\
& + \frac{1}{12}c_2(X)\big(\ch_0(E_1)\ch_1(E_2) -\ch_0(E_2)\ch_1(E_1) \big). 
\end{align*}
For any $(b,w) \in U$ and any class $v\in K(X)$ with $\nubw(v)<+\infty$, we can apply the work of Joyce-Song \cite{joyce:a-theoery-of-generalised-donadlson-thomas-invariants} to define generalised DT invariants 
\begin{equation*}
\J_{b,w}(v)\ \in\ \Q
\end{equation*}
counting $\nubw$-semistable objects $E \in \cA_b$ of class $v$. In \cite[Section 4 and Appendix C]{feyz:rank-r-dt-theory-from-0}, these invariants have been described and shown that they are independent of the choice of $(b,w)$ inside each chamber described in Proposition \ref{prop. locally finite set of walls} for class $v$. Also the Joyce-Song wall crossing formula applies to the $\J_{b,w}(v)$ under the same $\nubw(v)<+\infty$ condition.


Suppose $\ell$ is a non-vertical wall for a class $v \in K(X)$ with $\ch_1(v)H^2 >0$ if $\ch_0(v) = 0$. Thus $\ell$ passes through $\Pi(v)$ if $\ch_0(v) \neq 0$; otherwise is of gradient $\ch_2(v).H\big/\!\ch_1(v).H^2$. Let $(b,w_0)$ be a point on the line segment $\ell \cap U$. 
By the local finiteness of walls of Proposition \ref{prop. locally finite set of walls} we may choose
\beq{just}
(b,w^\pm)\,\in\,U\,\text{ just above and below the wall }\ell,
\eeq
in the sense that $(b,w^\pm)\not\in\ell$ and between $(b,w_-)$ and $(b,w_+)$ there are no walls for $v$, \emph{nor any of its finitely many semistable factors}, except for $\ell$. Then the wall crossing formula \cite[Equation (5.13)]{joyce:a-theoery-of-generalised-donadlson-thomas-invariants} gives
\begin{align}\label{wcf}
\begin{gathered}
\J_{b, w^-}(v)= \!\!\!\!\!\!
\sum_{\begin{subarray}{l}q\ge 1,\ \alpha_1,\ldots,\alpha_q\in
	C(\cA_b):\\ \alpha_1+\cdots+\alpha_q=v\end{subarray}}\,\,\,\,
\sum_{\begin{subarray}{l}\text{connected, simply-connected digraphs $\Gamma \colon$}\\
	\text{vertices $\{1,\ldots,q\}$, edge $i \rightarrow j$ implies $i<j$}\end{subarray}}\\
\frac{(-1)^{q-1 + \sum_{1 \leq i < j \leq q}\chi(\alpha_i, \alpha_j)}}{2^{q-1}}\, U\left(\alpha_1,\ldots,\alpha_q;(b,w^+),(b,w^-)\right) \!\!\!\!\!
\prod_{\text{edges $i \rightarrow j$ in $\Gamma$}}\!\!\!\!\!
\chi(\alpha_i,\alpha_j) \prod_{i=1}^q\J_{b, w^+}(\alpha_i).
\end{gathered}
\end{align}
Here $C(\Ab)$ is the positive cone \eqref{CAb} and the coefficients $U\left(\alpha_1,\ldots,\alpha_q;(b,w^+),(b,w^-)\right)$ are defined as follows.

\begin{Def}{\cite[Definition 3.12]{joyce:a-theoery-of-generalised-donadlson-thomas-invariants}}\label{Def-U}
	 Take two points $(b, w_1), (b, w_2) \in U$ 
	and let $q\ge 1$ and $\alpha_1,\ldots,\alpha_q\in C(\cA_b)$. If for all
	$i=1,\ldots,q-1$ we have either
	\begin{itemize}
		\setlength{\itemsep}{0pt}
		\setlength{\parsep}{0pt}
		\item[(a)] $\nu\_{b, w_1}(\alpha_i)\le\nu\_{b, w_1}(\alpha_{i+1})$ and
		$\nu\_{b, w_2}(\alpha_1+\cdots+\alpha_i)>\nu\_{b, w_2}(\alpha_{i+1}+\cdots+\alpha_q)$ or
		\item[(b)] $\nu\_{b, w_1}(\alpha_i)>\nu\_{b, w_1}(\alpha_{i+1})$ and~
		$\nu\_{b, w_2}(\alpha_1+\cdots+\alpha_i)\le\nu\_{b, w_2}(\alpha_{i+1}+\cdots+\alpha_q)$,
	\end{itemize}
	then define $S\big(\alpha_1,\ldots,\alpha_q;(b, w_1),(b, w_2)\big)=(-1)^r$ where $r$ is the number
	of $i=1,\ldots,q-1$ satisfying (a). Otherwise define
	$S\big(\alpha_1,\ldots,\alpha_q;(b, w_1),(b, w_2)\big)=0$. Now
	define
	\begin{align}
	&U\big(\alpha_1,\ldots,\alpha_q;(b, w_1),(b, w_2)\big)=
	\label{u}\\
	&\sum_{\begin{subarray}{l} \phantom{wiggle}\\
		1\le p\le t\le q, \;\  0=a_0<a_1<\cdots<a_t=q,\;\ 
		0=b_0<b_1<\cdots<b_{p}=t:\\
		\text{Define $\mathcal{B}_1,\ldots,\mathcal{B}_t\in C(\cA)$ by
			$\mathcal{B}_i=\alpha_{a_{i-1}+1}+\cdots+\alpha_{a_i}$.}\\
		\text{Define $\gamma_1,\ldots,\gamma_{p}\in C(\cA_b)$ by
			$\gamma_i=\mathcal{B}_{b_{i-1}+1}+\cdots+\mathcal{B}_{b_i}$.}\\
		\text{Then $\nu\_{b, w_1}(\beta_i)=\nu\_{b,w_1}(\alpha_j)$, $i=1,\ldots,t$,
			$a_{i-1}<j\le a_i$,}\\
		\text{and $\nu\_{b,w_2}(\gamma_i)=\nu\_{b,w_2}(\alpha_1+\cdots+\alpha_n)$,
			$i=1,\ldots,p$}
		\end{subarray}
		\!\!\!\!\!\!\!\!\!\!\!\!\!\!\!\!\!\!\!\!\!\!\!\!\!\!\!\!\!\!\!\!\!
		\!\!\!\!\!\!\!\!\!\!\!\!\!\!\!\!\!\!\!\!\!\!\!\!\!\!\!\!\!\!\!\!\!
		\!\!\!\!\!\!\!\!\!\!\!\!\!\!\!\!\!\!\!\!}
	\begin{aligned}[t]
	\frac{(-1)^{p-1}}{p}\cdot\prod\nolimits_{i=1}^{p}S\big(\mathcal{B}_{b_{i-1}+1},
	\mathcal{B}_{b_{i-1}+2},\ldots,\mathcal{B}_{b_i}; (b, w_1),(b, w_2)\big)&\\
	\cdot\prod_{i=1}^t\frac{1}{(a_i-a_{i-1})!}&\,.
	\end{aligned}
	\nonumber
	\end{align}
\end{Def}

One can easily check that the term $U\left(\alpha_1,\ldots,\alpha_q; (b, w^+), (b,w^-)\right)$ is zero unless there is a $\nu\_{b,w_0}$-semistable object $E$ of class $v$ with $\nu\_{b,w_0}$-semistable factors of classes $\alpha_1,\dots,\alpha_q$. 
The formula reflects the different Harder-Narasimhan filtrations of $E$ on the two sides of the wall, and then further filtrations of the semistable Harder-Narasimhan factors by semi-destabilising subobjects. 

The complicated formula \eqref{wcf} can be simplified when $q = 2$. Suppose $\nu\_{b,w_0}(\alpha_1)=\nu\_{b,w_0}(\alpha_2)$,
\begin{equation*}
\nu\_{b,w^+}(\alpha_1)>\nu\_{b,w^+}(\alpha_2) \qquad \text{and} \qquad \nu\_{b,w^-}(\alpha_1)<\nu\_{b,w^-}(\alpha_2). 
\end{equation*}
In the definition of $U(\alpha_1, \alpha_2;\, (b,w^+) , (b, w^-))$, we must have $p =1$ and $t=2$, thus 
\begin{equation*}
U(\alpha_1, \alpha_2;\, (b,w^+) , (b, w^-)) = S(\alpha_1, \alpha_2;\, (b,w^+) , (b, w^-)) = 1
\end{equation*}
and 
\begin{equation*}
U(\alpha_2, \alpha_1;\, (b,w^+) , (b, w^-)) = S(\alpha_2, \alpha_1;\, (b,w^+) , (b, w^-)) = -1
\end{equation*}
Thus the summation of coefficient for these two factors in \eqref{wcf} is 
\begin{equation}\label{sentence-2-terms}
(-1)^{\chi(\alpha_1,\alpha_2)+1}\chi(\alpha_1,\alpha_2).
\end{equation}
Let 
\beq{Jtilt}
\J_{\mathrm{ti}}(\alpha)\ \in\ \Q
\quad\text{and}\quad \J(\alpha)\ \in\ \Q 
\eeq
count tilt (Gieseker)-semistable sheaves of class $\alpha$, see \cite[Section 4]{feyz:rank-r-dt-theory-from-0} for more details. If all Gieseker semistable sheaves of class $\alpha\in K(X)$ are Gieseker stable then $\J(\alpha)\in\Z$ is the original DT invariant defined in \cite{thomas:holomorphic-invariants}. We know tilt stability \emph{dominates} Gieseker stability in the sense of \cite[Definition 4.10]{joyce:configurations-iii}. Thus we may apply Joyce-Song wall-crossing formula:
\begin{align}\label{wcf-tilt-gieseker}
\begin{gathered}
\J_{\mathrm{ti}}(v)= \!\!\!\!\!\!
\sum_{\begin{subarray}{l}q\ge 1,\ \alpha_1,\ldots,\alpha_q\,\in\,
	C(\Coh(X)):\\ \alpha_1+\cdots+\alpha_q=v\end{subarray}}\,\,\,\,
\sum_{\begin{subarray}{l}\text{connected, simply-connected digraphs $\Gamma \colon$}\\
	\text{vertices $\{1,\ldots,q\}$, edge $i \rightarrow j$ implies $i<j$}\end{subarray}}\\
\frac{(-1)^{q-1 + \sum_{1 \leq i < j \leq k}\chi(\alpha_i, \alpha_j)}}{2^{q-1}}\, U\left(\alpha_1,\ldots,\alpha_q;\mathrm{Gi}, \mathrm{ti}\right) \!\!\!\!\!
\prod_{\text{edges $i \rightarrow j$ in $\Gamma$}}\!\!\!\!\!
\chi(\alpha_i,\alpha_j) \prod_{i=1}^q\J(\alpha_i).
\end{gathered}
\end{align}
Here $U\left(\alpha_1,\ldots,\alpha_k;\mathrm{Gi}, \mathrm{ti}\right)$ is defined as in Definition \ref{Def-U} by replacing $\nu\_{b, w_1}(\alpha)$ by $p\_{[\alpha]}$  and $\nu\_{b,w_2}(\alpha)$ by $\widetilde{p}\_{[\alpha]}$ for any $\alpha \in K(X)$.   


\section{Walls for rank-zero classes}\label{section.wall}
Let $(X, \cO_X(1))$ be a smooth polarised complex projective threefold with $H = c_1(\cO_X(1))$. Fix a rank zero class $\v = (0, D, \beta, m) \in K(X)$ with 
\begin{equation*}
\ch_H(\v) = (0, k, s, d)
\end{equation*} 
satisfying inequality \eqref{bound for Q}, i.e.
\begin{equation}\label{equiavalent bound for Q}
Q(v) < \frac{1}{2}k^2 - \frac{1}{2} \left(k-\frac{1}{H^3} + \frac{2}{k(H^3)^2} \right)^2
\end{equation}
where 
\begin{equation*}
Q(\v) = \frac{1}{2}k^2 +6\frac{s^2}{k^2} -12 \frac{d}{k}\,.
\end{equation*} 
 In this section, we analyse walls in the space of weak stability conditions $U$ for class $\v$. By Proposition \ref{prop. locally finite set of walls}, walls for class $\v$ are parallel lines of slope $\frac{s}{k} = \frac{\beta.H}{D.H^2}$.  

\begin{Prop}\label{prop.destabilising}
	Any wall $\ell$ for class $\v$ lies above or on the line $\ell_{f}$ with equation 
	\begin{equation}\label{line ell-f}
	w = \frac{s}{k} b + \frac{k^2}{8} - \frac{s^2}{2k^2} - \frac{1}{4}Q(\v).
	\end{equation}  
	Let $E' \rightarrow E \rightarrow E''$ be a destabilising sequence for an object $E$ of class $\v$ along a wall $\ell$, then 
	\begin{itemize*}
		\item one of the destabilising factors $E_1$ (which is either $E'$ or $E''$) is of Chern character $v_1 = -e^{D_1}(1, 0, -\beta_1, -m_1)$ such that $\big(E_1 \otimes \det E_1\big)^{\vee}[1]$ is a stable pair,
		\item the other factor $E_2$ is a torsion-free sheaf of Chern character $v_2 = e^{D_2}(1, 0, -\beta_2, -m_2)$. 
	\end{itemize*}
	Moreover, both $E_1$ and $E_2$ are $\nu_{b,w}$-stable for any $(b,w) \in \ell \cap U$. 
\end{Prop}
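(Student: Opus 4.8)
The plan is to read off all three assertions from the Bogomolov--Gieseker inequality, applied first to $E$ and then to the two destabilising factors, working throughout in the normalisation $\ch_H(\v)=(0,k,s,d)$ and remembering (Proposition \ref{prop. locally finite set of walls}(b)) that every wall $\ell$ for $\v$ has slope $s/k$.

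\textbf{Step 1: the line $\ell_f$.} First I would compute $B_{b,w}(\v)$ from \eqref{quadratic form} directly. Since $\ch_0(\v)=0$ one has $\Delta_H(\v)=k^2(H^3)^2$, and the linear expression \eqref{boglinear} collapses to $B_{b,w}(\v)=2(H^3)^2\big(k^2w-skb+2s^2-3kd\big)$, whose zero locus is exactly the line \eqref{line ell-f}. As $E$ is $\nubw$-semistable at every point of $\ell\cap U$ and the coefficient of $w$ here is $2k^2(H^3)^2>0$, Conjecture \ref{conjecture} forces $\ell\cap U$ into the half-plane $B_{b,w}(\v)\ge 0$, i.e.\ on or above $\ell_f$; since $\ell$ and $\ell_f$ are parallel, the whole wall lies on or above $\ell_f$, which is the first claim.

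\textbf{Step 2: the ranks are $\pm 1$.} Writing $v_1+v_2=\v$, the ranks are $r$ and $-r$. The case $r=0$ (both factors of rank zero) is impossible: by \eqref{noo} a rank-zero object has $\nubw$-slope $\ch_2.H/\ch_1.H^2$ independent of $(b,w)$, so two such factors with equal slope on $\ell$ keep equal slope in both adjacent chambers and can never strictly destabilise, contradicting Proposition \ref{prop. locally finite set of walls}(d). For $r\ge 1$ the geometric picture is that on $\ell$ the points $\Pi(v_1),\Pi(v_2)$ both lie on $\ell$ (equal slope to $\v$) and both lie on or below the parabola $w=b^2/2$ by \eqref{BOG}, while \eqref{65} gives $\mu_H>b$ for the positive-rank factor and $\mu_H<b$ for the negative-rank one. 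Hence they sit on the two rays of $\ell$ outside $U$, on opposite sides of the chord $\ell\cap U$, so their horizontal $b$-separation, which equals $k/r$ (because $\ch_1.H^2$ is additive while the ranks are $\pm r$), is at least the chord width $2\sqrt{(s/k)^2+2c}$, where $c$ is the intercept of $\ell$. Using $c\ge c_f$ from Step 1 together with the identity $(s/k)^2+2c_f=\tfrac14k^2-\tfrac12 Q(\v)$, this yields $r\le k/\sqrt{k^2-2Q(\v)}$. Finally the hypothesis \eqref{equiavalent bound for Q} gives $Q(\v)<\tfrac38 k^2$: by AM--GM $\tfrac k2+\tfrac{2}{k(H^3)^2}\ge\tfrac{2}{H^3}>\tfrac1{H^3}$, so $k-\tfrac1{H^3}+\tfrac{2}{k(H^3)^2}>\tfrac k2$ and the right-hand side of \eqref{equiavalent bound for Q} is below $\tfrac12k^2-\tfrac18k^2=\tfrac38k^2$. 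Then $k/\sqrt{k^2-2Q(\v)}<2$, forcing $r=1$. Labelling the rank $+1$ factor $E_2$ and the rank $-1$ factor $E_1$, I would then pass to the large-volume chamber and apply Lemma \ref{lem.large volume limit}: part (a) (since $\ch_0(E_2)=1\ge 0$ and $b<\mu_H(E_2)$) identifies $E_2$ with a tilt-stable, hence rank-one torsion-free, sheaf $I_{C_2}\otimes L_2$, giving $v_2=e^{D_2}(1,0,-\beta_2,-m_2)$ with $D_2=c_1(L_2)$; part (b) (since $\ch_0(E_1)=-1$ and $\mu_H(E_1)<b$) shows $(E_1\otimes\det E_1)^\vee[1]$ is a stable pair and gives $v_1=-e^{D_1}(1,0,-\beta_1,-m_1)$, its last clause recording that semistability equals stability for this class.

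\textbf{Main obstacle.} The delicate part is the passage to large volume just used. The crude estimate $Q(\v)<\tfrac38k^2$ suffices only to force $r=1$; to invoke Lemma \ref{lem.large volume limit} one must rule out any wall for $v_1$ or $v_2$ meeting $U$ strictly above $\ell$, and to obtain $\nubw$-\emph{stability} (not merely semistability) of $E_1,E_2$ along all of $\ell\cap U$ one must also exclude sub-destabilisers on $\ell$ itself. This is exactly what the sharper terms $D.H^2+\tfrac{2}{D.H^2}-\tfrac52-\tfrac{2}{(D.H^2)^2}$ on the right of \eqref{bound for Q} are calibrated to guarantee: they pin the discriminants $\Delta_H(v_1),\Delta_H(v_2)$ — equivalently the positions of $\Pi(v_1),\Pi(v_2)$ relative to the parabola — tightly enough that the chord estimate of Step 2, now run for $v_1$ and for $v_2$ separately, leaves no room for a further wall. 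Carrying out that discriminant bookkeeping is the technical heart of the proof; once it is in place, translating ``rank-one torsion-free sheaf'' and ``stable pair'' into the stated exponential Chern-character forms is routine.
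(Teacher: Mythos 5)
Your Step 1 is exactly the paper's argument, and your Step 2 is a correct variant of the paper's rank bound: the paper runs the same width comparison against $b_1-b_2=\sqrt{k^2-2Q(\v)}$, but applies it to the \emph{cohomology sheaves} of the two factors, obtaining $\ch_0(\cH^{-1}(E_1))+\ch_0(\cH^{-1}(E_2))=\ch_0(\cH^{0}(E_1))+\ch_0(\cH^{0}(E_2))\le 1$. This is strictly stronger than your conclusion that the net ranks are $\pm1$: it shows at once that $E_2$ is a rank-one \emph{sheaf} and that $\cH^{0}(E_1)$ has rank zero, neither of which follows from your version (net rank $+1$ is compatible with, say, $\ch_0(\cH^{-1}(E_2))=1$ and $\ch_0(\cH^{0}(E_2))=2$). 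This loss matters below.

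The genuine gap is the step you defer under ``Main obstacle'': nothing in your proposal proves that $E_1$ and $E_2$ are $\nubw$-stable along $\ell$ and remain semistable from $\ell$ up to the large-volume chamber, yet every remaining assertion of the Proposition — torsion-freeness of $E_2$, the stable-pair description of $E_1$, and the final stability claim — rests on exactly this. Moreover, the repair you sketch (``the chord estimate of Step 2, now run for $v_1$ and for $v_2$ separately'') cannot work as stated: the chord estimate exploited that all walls for the rank-zero class $\v$ are \emph{parallel} lines of slope $s/k$ (Proposition \ref{prop. locally finite set of walls}(b)), whereas walls for the rank-$\pm1$ classes $v_1,v_2$ form pencils of lines through $\Pi(v_1)$, $\Pi(v_2)$, so there is no fixed chord against which to compare a width. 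What the paper actually does is prove the proximity estimates $\mu_H(E_2)-b_1<\tfrac{1}{H^3}$ and $b_2-\mu_H(E_1)<\tfrac{1}{H^3}$ by contradiction from \eqref{lower} — an argument that uses $\mu_H(E_1)\le \mu_H(\cH^{-1}(E_1))$, i.e.\ precisely the rank-zero-ness of $\cH^0(E_1)$ that your Step 2 does not deliver — and then invokes \cite[Lemmas 8.1 and 8.2]{feyz:thomas-noether-loci}, which say in effect that no wall for a rank-$\pm1$ class crosses the vertical line at horizontal distance $\tfrac1{H^3}$ from $\Pi(v_i)$ once the wall $\ell$ meets that line inside $U$; stability of $E_i$ then propagates from $\ell$ up to $w\gg0$, and only at that point does Lemma \ref{lem.large volume limit} convert stability into ``torsion-free sheaf'' and ``stable pair''. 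Without this ingredient (or a genuine substitute for it), your proposal establishes only the first claim of the Proposition and a weakened form of the rank statement.
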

\begin{proof}
	By \eqref{boglinear}, if there is a $\nu_{b,w}$-semistable object of class $v$ for some $(b,w) \in U$, then
	\begin{align}\label{bg-conjecture-our use}
	0 \leq \ \frac{1}{2(kH^3)^2}B_{b,w}(\v) =\ & w -b \frac{s}{k} + 2 \frac{s^2}{k^2} -3 \frac{d}{k} \\
	=\ &  w -b \frac{s}{k} -\frac{k^2}{8} +  \frac{s^2}{2k^2} + \frac{1}{4}Q(\v).\nonumber 
	\end{align} 
	Thus $\ell_f$ is the line given by $B_{b,w}(\v) = 0$ and so the first claim follows. The line $\ell_f$ intersects $\partial U$ at two points with $b$-values $b_2 <b_1$, 
	\begin{equation*}
	b_1, b_2 = \frac{s}{k} \pm \sqrt{\frac{k^2}{4} - \frac{1}{2}Q(v) }.
	\end{equation*}
	Then \eqref{bound for Q} or equivalently \eqref{equiavalent bound for Q} gives 
	\begin{align}\label{lower}
	b_1 -b_2 = \sqrt{k^2 -2Q(v) }\ >\ k- \frac{1}{H^3} + \frac{2}{k(H^3)^2}\, .  
	\end{align}
	Since the wall $\ell$ lies above $\ell_f$, the destabilising factors $E_1$ and $E_2$ satisfy
	\begin{equation}\label{b}
	\mu_H^+(\cH^{-1}(E_i)) \leq b_2 \qquad \text{and} \qquad b_1 \leq \mu_H^-(\cH^0(E_i)).
	\end{equation}
	Therefore 
	\begin{align}\label{b-difference}
	kH^3 =\ & \ch_1(\cH^{0}(E_1))H^2 + \ch_1(\cH^{0}(E_1))H^2 - \ch_1(\cH^{-1}(E_1))H^2- \ch_1(\cH^{-1}(E_1))H^2\nonumber\\
	\geq \ & b_1H^3\big(\ch_0(\cH^{0}(E_1)) + \ch_0(\cH^{0}(E_2)) \big) -b_2H^3 \big(\ch_0(\cH^{-1}(E_1)) + \ch_0(\cH^{-1}(E_2)) \big)\nonumber\\
	= \ & (b_1-b_2)H^3\big(\ch_0(\cH^{0}(E_1)) + \ch_0(\cH^{0}(E_2)) \big). 
	\end{align}
	The last inequality comes from $\rk(E) = \rk(E_1) +\rk(E_2) = 0$. Combining this with \eqref{lower} implies that
	\begin{equation*}
	\ch_0(\cH^{-1}(E_1)) + \ch_0(\cH^{-1}(E_2)) = \ch_0(\cH^{0}(E_1)) + \ch_0(\cH^{0}(E_2)) \leq 1
	\end{equation*}
	Therefore, one of the factors $E_1$ is of rank $-1$ with $\cH^{-1}(E_1)$ of rank one and $\cH^0(E_1)$ of rank zero; and the other factor $E_2$ is a sheaf of rank one. 
	
	We claim 
	\begin{equation}\label{lb}
	\mu_H(E_2) -b_1 < \frac{1}{H^3}.
	\end{equation}
	Otherwise, \eqref{b} gives   
	\begin{equation*}
	b_1-b_2 \leq \mu_H(E_2) - \frac{1}{H^3} - \mu_H(\cH^{-1}(E_1)).
	\end{equation*}   	
	Since $\cH^{0}(E_1)$ is of rank zero, we have $\mu_H(E_1) \leq \mu_H(\cH^{-1}(E_1))$, thus
	\begin{equation*}
	b_1-b_2 \leq \mu_H(E_2) -\frac{1}{H^3} -\mu_H(E_1) = k- \frac{1}{H^3}. 
	\end{equation*}
	The last equality comes from $\rk(E_2) = -\rk(E_1) = 1$. But the above is not possible by \eqref{lower}. 
	
	Since $k > 0$ and $k \in \frac{1}{H^3}\Z$, \eqref{lower} gives $b_1-b_2 > \frac{1}{H^3}$. Therefore \eqref{lb} implies that the line $\ell_f$ intersects the vertical line $b = \mu_H(E_2) - \frac{1}{H^3}$ at a point inside $U$. Since the wall $\ell$ lies above $\ell_f$, the same holds for $\ell$. Thus \cite[Lemma 8.1]{feyz:thomas-noether-loci} implies that $E_2$ is $\nu_{b_0, w}$-stable for $b_0 = \mu_H(E_2) - \frac{1}{H^3}$ and all $w > \frac{b_0^2}{2}$, hence by Lemma \ref{lem.large volume limit}, $E_2$ is tilt-stable and so torsion-free.

	By applying a similar argument as above, one can show 
	\begin{equation}
	b_2 - \mu_H(E_1) < \frac{1}{H^3}.
	\end{equation}
	Thus \cite[Lemma 8.2]{feyz:thomas-noether-loci} implies that $E_1$ is stable along the wall $\ell$ and in the large volume limit, so the claim follows from Lemma \ref{lem.large volume limit}.
\end{proof}

For any sheaf $E$ of rank zero, we define $\nu\_H$-slope as 
\begin{equation}\label{nuslope}
\nu\_H(E)\ :=\ \left\{\!\!\begin{array}{cc} \frac{\ch_2(E).H}{\ch_1(E).H^2} & \text{if }\ch_1(E).H^2\ne0, \\
+\infty & \text{if }\ch_1(E).H^2=0. \end{array}\right.
\end{equation}
We say that a sheaf $E$ of rank zero is slope (semi)stable if for all non-trivial quotient sheaves $E\to\hspace{-3mm}\to E'$ one has $\nu\_H(E)\,(\leq)\, \nu\_H(E')$. 

Hence a sheaf $E$ of class $\v$ is slope(semi)-stable if and only if it is tilt-(semi)stable. Thus Lemma \ref{lem.large volume limit} implies that $E$ is $\nu_{b,w}$-(semi)stable for $w \gg 0$ if and only if it is slope-(semi)stable. The following proves the first part of Theorem \ref{thm-rank zero-wall-crossing}.
\begin{Lem}\label{lem-first part of theorem 1}
	If $Q(\v) <0$, there is no slope-semistable sheaf of class $\v$. 
\end{Lem}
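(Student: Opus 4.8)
The plan is to deduce the lemma from the stronger

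\medskip
\noindent\textbf{Claim.} \emph{If $Q(\v)<0$ then there is no $\nu_{b,w}$-semistable sheaf of class $\v$ for any $(b,w)\in U$,}
\medskip

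\noindent which I would prove by induction on the positive integer $\ch_1(\v).H^2=kH^3\in\Z_{\ge1}$. This implies the lemma immediately: by the discussion preceding the statement (together with Lemma \ref{lem.large volume limit}) a slope-semistable sheaf of class $\v$ is a $\nu_{b,w}$-semistable sheaf for $w\gg0$, hence is exactly such an object. The combinatorial engine is an elementary identity for any decomposition $\v=\sum_j v_j$ into rank-zero classes of a common $\nu_H$-slope $\sigma$ (so $s_j/k_j=\sigma$, where $k_j=\tfrac1{H^3}\ch_1(v_j).H^2$):
\begin{equation*}
Q(\v)\ =\ \frac{\big(\textstyle\sum_j k_j\big)^3-\sum_j k_j^3}{2\sum_j k_j}\ +\ \frac{\sum_j k_j\,Q(v_j)}{\sum_j k_j}\,.
\end{equation*}
I would verify this by solving $Q(v_j)=\tfrac12 k_j^2+6\sigma^2-12d_j/k_j$ for $d_j$, summing over $j$, and using $\big(\sum_j k_j\big)^3-\sum_j k_j^3>0$ for positive $k_j$ (strict once there are at least two factors). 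Since the first term is then positive, $Q(\v)<0$ forces $Q(v_{j_0})<0$ for some $j_0$, with $k_{j_0}<k$.

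For the inductive step, suppose $E$ is $\nu_{b,w_0}$-semistable of class $\v$ with $Q(\v)<0$. Because $Q(\v)<0<\tfrac12 k^2$, the line $\ell_f$ of \eqref{line ell-f} meets the interior of $U$, and by the Bogomolov--Gieseker inequality (Conjecture \ref{conjecture}, in the form \eqref{bg-conjecture-our use}) no object of class $\v$ is $\nu_{b,w}$-semistable in the nonempty ``lens'' lying inside $U$ and strictly below $\ell_f$. If $E$ admitted no wall, then by Proposition \ref{prop. locally finite set of walls} it would be $\nu_{b,w}$-semistable throughout the single chamber $U$, hence (semistability being a closed condition) at a point of this lens, a contradiction. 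So $E$ must lie on a wall $\ell$ across which it is strictly destabilised, and on $\ell$ its Jordan--Hölder factors $G_j$ are $\nu_{b,w_0}$-stable of the common slope $\sigma$, with classes $v_j$ summing to $\v$.

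Now I would run the rank estimate \eqref{b-difference} on each sub/quotient pair $(F_i,E/F_i)$ of the JH filtration $0=F_0\subset\cdots\subset F_\ell=E$: these lie in $\cA_b$ along the whole wall, so their slopes obey the bounds by $b_1,b_2$, and since $Q(\v)<0$ gives $b_1-b_2=\sqrt{k^2-2Q(\v)}>k$ by \eqref{lower}, the inequality \eqref{b-difference} forces $\ch_0(\cH^0(F_i))+\ch_0(\cH^0(E/F_i))=0$. As torsion cannot occur as an $\cH^{-1}$ in $\cA_b$, each $F_i$, hence each $G_j$, is a genuine rank-zero sheaf; by \eqref{65} its $\ch_1.H^2$ is a positive integer, so $\ch_1(G_j).H^2<kH^3$. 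The identity above then yields some $G_{j_0}$ whose class has $Q(v_{j_0})<0$ while $\ch_1(v_{j_0}).H^2<kH^3$; being $\nu_{b,w_0}$-stable, $G_{j_0}$ contradicts the induction hypothesis. The base case $\ch_1(\v).H^2=1$ is then automatic, since $\v$ cannot split into two classes of positive integral $\ch_1.H^2$, so no strict destabilisation is possible and only the contradictory ``no wall'' alternative remains.

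The main obstacle is precisely this middle step: upgrading the mere existence of the forbidden lens into an \emph{honest} destabilisation of $E$ itself at a wall whose factors are again $\nu_{b,w}$-semistable rank-zero sheaves. This requires combining the wall-and-chamber structure of Proposition \ref{prop. locally finite set of walls} (to guarantee that $E$ actually crosses a wall and is strictly semistable on it, with JH factors of slope $\sigma$) with the observation that the rank argument of Proposition \ref{prop.destabilising}, whose decisive inequality $b_1-b_2>k$ needs only $Q(\v)<0$ rather than the stronger \eqref{bound for Q}, applies verbatim to every step of the JH filtration because those subquotients lie in $\cA_b$ along the entire wall.
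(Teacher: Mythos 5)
Your argument is essentially correct, but it takes a genuinely different --- and much longer --- route than the paper. The paper's own proof is three lines: it introduces the line $\ell_{\v}$ of slope $s/k$ whose chord in $U$ has horizontal length exactly $k$ (equation \eqref{equation-}), invokes the argument of \cite[Lemma B.3]{feyz:rank-r-dt-theory-from-0} to conclude that there are \emph{no walls whatsoever} for $\v$ above $\ell_{\v}$, propagates $\nu_{b,w}$-semistability of a slope-semistable sheaf from the large-volume limit down to $\ell_{\v}$, and then notes that Conjecture \ref{conjecture} forbids semistable objects strictly below $\ell_f$, so $\ell_f$ must lie on or below $\ell_{\v}$; comparing the intercepts of \eqref{line ell-f} and \eqref{equation-} gives exactly $Q(\v)\geq 0$. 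Your contrapositive scheme --- induction on $\ch_1(\v).H^2$ via the convexity identity for $Q$ under decompositions into rank-zero classes of common $\nu_H$-slope --- is sound: the identity is correct, and your key observation that \eqref{b-difference} combined with $b_1-b_2=\sqrt{k^2-2Q(\v)}>k$ forces every factor to be a rank-zero sheaf is exactly right. What you did not notice is that this observation makes the induction and the identity superfluous. A rank-zero sheaf $G$ in the heart has $\nu_{b,w}(G)=\nu_H(G)$ \emph{independent} of $(b,w)$; so apply your rank estimate to the destabilising pair $F\into E$ supplied by Proposition \ref{prop. locally finite set of walls}(d) (which is guaranteed to lie in $\cA_{\;b}$ along the whole wall): both $F$ and $E/F$ are then rank-zero sheaves of $\nu_H$-slope $\sigma$, so $\nu_{b,w}(F)=\nu_{b,w}(E)$ at \emph{every} point of $U$, contradicting the requirement that $F$ strictly destabilise $E$ in an adjacent chamber. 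Hence $Q(\v)<0$ directly yields ``no walls for $\v$'', and your lens argument below $\ell_f$ finishes immediately; this collapse is precisely the content of the Lemma B.3 argument the paper cites, so your proof contains the paper's proof inside it.

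One technical caveat. You assert that the subquotients $(F_i,E/F_i)$ of the JH filtration ``lie in $\cA_b$ along the entire wall'' so that \eqref{b} and \eqref{b-difference} apply verbatim. Proposition \ref{prop. locally finite set of walls}(d) guarantees heart-membership along the whole wall only for the particular destabilising pair attached to the wall, not for every subquotient of an arbitrary JH filtration of your given $E$. The fact you need is the standard one that a $\nu_{b_0,w_0}$-semistable object whose slope equals the gradient of $\ell$ has its projection $\Pi$ lying on $\ell$ (when its rank is nonzero), hence its own walls --- lines through that projection, or parallels of the same gradient in the rank-zero case --- never cross $\ell\cap U$, so it stays semistable, and in particular in the heart, along all of $\ell\cap U$. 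This is true and standard, but it is an extra lemma rather than a verbatim application of Proposition \ref{prop.destabilising}; since both your inductive step and your base case lean on it, it should be proved rather than asserted --- or avoided altogether by running the estimate only on the pair from Proposition \ref{prop. locally finite set of walls}(d), as above.
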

\begin{proof}
	Let $\ell_{\v}$ be the line of slope $\frac{s}{c}$ which intersects $\partial U$ at two points with $b$-values $b_{\v} < a_{\v}$ such that $a_{\v} -b_{\v} =k$. It is of equation
	\begin{equation}\label{equation-}
	w = \frac{s}{k} b + \frac{k^2}{8} - \frac{s^2}{2k^2}\ . 
	\end{equation}
	The same argument as in the proof of \cite[Lemma B.3]{feyz:rank-r-dt-theory-from-0} implies that there is no wall for class $\v$ above $\ell_{\v}$. Thus if there is a slope-semistable sheaf of class $\v$, then it is $\nu_{b,w}$-semistable for any $(b,w) \in U$ above $\ell_{\v}$. Hence the final line $\ell_f$ of equation \eqref{line ell-f} must lie on or below $\ell_{\v}$. Comparing its equation \eqref{line ell-f} with \eqref{equation-} gives $Q(\v) \geq 0$.       
\end{proof}
Applying a similar argument as in \cite[Section 2]{feyz:curve-counting} implies the next lemma. 
\begin{Lem}\label{lem-no-strictly-semistable}
	Any slope-semistable sheaf of class $\v$ is slope-stable. 
\end{Lem}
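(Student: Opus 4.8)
The plan is to argue by contradiction, splitting off a Jordan--Hölder factor and reducing everything to the positivity already proved in Lemma \ref{lem-first part of theorem 1}. Suppose $E$ is a slope-semistable sheaf of class $\v$ that is \emph{not} slope-stable. Slope-semistability provides a slope-Jordan--Hölder filtration of $E$ whose graded pieces are slope-stable of slope $s/k$, and there are at least two of them, so I can form a short exact sequence $0\to A\to E\to B\to0$ of rank-zero sheaves with $A$ slope-stable and $B$ slope-semistable (a successive extension of slope-stable sheaves of the common slope $s/k$ is slope-semistable), both of finite slope $\nu_H(A)=\nu_H(B)=s/k$. Writing $\ch_H(A)=(0,k_1,s_1,d_1)$ and $\ch_H(B)=(0,k_2,s_2,d_2)$, additivity of $\ch$ gives $k_1+k_2=k$, $s_1+s_2=s$, $d_1+d_2=d$, while equality of slopes gives $s_1/k_1=s_2/k_2=s/k$, so $s_i^2/k_i^2=s^2/k^2$.

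The crux is an elementary identity. Let $Q(A),Q(B)$ denote the quadratic expression of the statement formed from $(k_1,s_1,d_1)$ and $(k_2,s_2,d_2)$. Since $A$ and $B$ are themselves rank-zero slope-semistable sheaves, the contrapositive of Lemma \ref{lem-first part of theorem 1} forces $Q(A)\ge0$ and $Q(B)\ge0$. On the other hand, substituting $s_i^2/k_i^2=s^2/k^2$ into the definition of $Q$ and using $k_1^3+k_2^3-(k_1+k_2)^3=-3k_1k_2k$, a direct computation gives
\[
k_1\,Q(A)+k_2\,Q(B)\ =\ k\,Q(\v)\ -\ \tfrac32\,k_1k_2\,k.
\]
As the left-hand side is $\ge0$ and $k>0$, this yields the lower bound $Q(\v)\ge\tfrac32 k_1k_2$.

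Now I would feed in integrality and the hypothesis \eqref{bound for Q}. Since $A$ is a nonzero rank-zero subsheaf of finite slope, $\ch_1(A).H^2$ is a positive integer, so $k_1=\ch_1(A).H^2/H^3\ge 1/H^3$, and likewise $k_2\ge1/H^3$ (a decomposition can only occur once $k\ge2/H^3$). With $k_1+k_2=k$ fixed, the product $k_1k_2$ is minimized at the endpoints, giving $k_1k_2\ge\tfrac{1}{H^3}\big(k-\tfrac1{H^3}\big)$ and hence $Q(\v)\ge\tfrac{3}{2H^3}\big(k-\tfrac1{H^3}\big)$. It then remains to check this contradicts \eqref{equiavalent bound for Q}: factoring $\tfrac12k^2-\tfrac12\big(k-\tfrac1{H^3}+\tfrac{2}{k(H^3)^2}\big)^2$ as $\tfrac12(k-u)(k+u)$ with $u=k-\tfrac1{H^3}+\tfrac{2}{k(H^3)^2}$ and comparing, one finds
\[
\tfrac{3}{2H^3}\Big(k-\tfrac1{H^3}\Big)-\Big(\tfrac12k^2-\tfrac12 u^2\Big)\ =\ \tfrac{k}{2H^3}+\tfrac{1}{2(H^3)^2}+\tfrac12\Big(\tfrac1{H^3}-\tfrac{2}{k(H^3)^2}\Big)^2\ >\ 0,
\]
so the lower bound $\tfrac{3}{2H^3}(k-\tfrac1{H^3})$ strictly exceeds the upper bound imposed on $Q(\v)$ by \eqref{bound for Q}. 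This is the desired contradiction, forcing $E$ to be slope-stable.

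The genuinely delicate point is not the algebra (the cross-term identity and the final sign are routine once set up) but ensuring that $A$ and $B$ really are slope-semistable so that Lemma \ref{lem-first part of theorem 1} applies to each — this is exactly why I extract them from a Jordan--Hölder filtration. I expect the main thing to pin down is that Lemma \ref{lem-first part of theorem 1} is available for the classes of $A$ and $B$: its proof uses only that the class is rank zero with nonzero effective $\ch_1$ (and \emph{not} the bound \eqref{bound for Q}), which is why the reduction to $Q(A),Q(B)\ge0$ is legitimate even though $A$ and $B$ need not satisfy \eqref{bound for Q} themselves.
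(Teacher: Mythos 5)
Your proof is correct in substance and takes a genuinely different route from the paper's. The paper argues by wall-crossing: a strictly slope-semistable $E$ is $\nu_{b,w}$-semistable for $w\gg0$, hence must be destabilised along some wall above $\ell_f$; Proposition \ref{prop.destabilising} (this is where \eqref{bound for Q} enters there) forces the two factors along that wall to be a rank-one torsion-free sheaf and a rank $-1$ object, both $\nu_{b_0,w_0}$-stable on the wall, and a diagram chase in $\cA_{b_0}$ then shows that an equal-slope quotient $E'$ of $E$ would produce a surjection from the stable rank-one factor onto a same-slope object, which is impossible. You bypass the wall analysis entirely: you split $E$ into two slope-semistable rank-zero pieces of common finite slope $s/k$, apply Lemma \ref{lem-first part of theorem 1} to each piece (your observation that its proof uses only the Bogomolov--Gieseker input and not \eqref{bound for Q}, so that it applies to the classes of $A$ and $B$, is correct and is the crucial enabling point), and exploit the identity $k_1Q(A)+k_2Q(B)=kQ(\v)-\tfrac32k_1k_2k$ plus the integrality $k_i\ge 1/H^3$. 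I have checked the identity, the endpoint minimisation of $k_1k_2$, and the final comparison against \eqref{equiavalent bound for Q}; all are right. What each approach buys: the paper's proof recycles Proposition \ref{prop.destabilising}, which is needed for Theorem \ref{thm-rank zero-wall-crossing} anyway, and identifies the actual destabilising objects; yours is more elementary and proves something quantitatively stronger, namely that strict slope-semistability already forces $Q(\v)\ge\tfrac{3}{2H^3}\bigl(k-\tfrac1{H^3}\bigr)$, so the lemma holds under this weaker numerical hypothesis rather than the full strength of \eqref{bound for Q}.

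The one step you should pin down is the claim that the Jordan--H\"older pieces both have finite slope $s/k$. With the paper's quotient-convention definition of slope-semistability, lower-dimensional sheaves are invisible: a quotient of a pure $2$-dimensional sheaf can carry $0$-dimensional torsion, whose $\nu_H$-slope is $+\infty$, so a graded piece of positive $\ch_1.H^2$ and slope $s/k$ is not automatic. (The paper's own proof makes the same implicit assumption, hidden in the identification of slope-semistability with tilt-semistability; tilt-semistable rank-zero sheaves are automatically pure, while quotient-convention slope-semistable ones need not be.) Granting purity of $E$ --- the reading under which the lemma is actually used, since it is applied to tilt-semistable sheaves --- the gap closes easily: take an equal-slope quotient $E\twoheadrightarrow B$, let $T\subset B$ be the maximal lower-dimensional subsheaf (equality of slopes forces $\ch_2(T).H=0$, so $T$ is $0$-dimensional and $B/T$ is again an equal-slope, semistable quotient), and set $A=\ker\bigl(E\twoheadrightarrow B/T\bigr)$. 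Then $A$ and $B/T$ are pure, slope-semistable, of slope $s/k$, their classes sum to $\v$, and your computation runs verbatim.
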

\begin{proof}
	Suppose there is a strictly slope semistable sheaf $E$ of class $\v$. We know $E$ is $\nu_{b,w}$-semistable for $w \gg 0$. When we move down it hits a wall $\ell$ which lies above $\ell_f$. by Proposition \ref{prop.destabilising}, the destabilising sequence is of the form $E_2 \hookrightarrow E \twoheadrightarrow E_1$ such that both $E_1$ is a rank one torsion-free sheaf. Note that $E_1$ is a quotient object as it destabilises $E$ below the wall $\ell$. We also know $E_1$ and $E_2$ are $\nu_{b_0,w_0}$-stable for any $(b_0,w_0) \in \ell \cap U$.

	Suppose $E \twoheadrightarrow E'$ is a proper quotient sheaf with $\nu\_H(E') = \nu\_H(E)$. Since rank$\,E'=0=\,$rank$\,E$ the formula \eqref{noo} gives
	$$
	\nu\_{b,w}(E')\ =\ \frac{\ch_2(E')H}{\ch_1(E')H^2}\ =\ \frac{s}{k}\ =\ \nu\_{b,w}(E)
	$$
	for all $(b,w) \in U$. Since all torsion sheaves are in $\cA_{\;b_0}$, $E'$ is a quotient of $E$ in the abelian category $\cA_{\;b_0}$, and any quotient of $E'$ in $\mathcal{A}_{b_0}$ is also a quotient of $E$. Therefore $E'$ is also $\nu_{b_0, w_0}$-semistable for $(b_0,w_0) \in \ell \cap U$. 
	
	Since $E_2$ is $\nu_{b_0,w_0}$-stable, the composition
	$$
	E_2 \Into E \Onto E'
	$$
	in $\cA_{b_0}$ must either be zero or injective. And it cannot be zero, because this would give a surjection $E_1 \twoheadrightarrow E'$ in $\mathcal{A}_{b_0}$, contradicting the $\nu_{b_0,w_0}$-stability of $E_1$. So it is injective. Let $C$ denote its cokernel in $\cA_{b_0}$, sitting in a commutative diagram
	\begin{equation*}
	\xymatrix@R=16pt{
		\,E_2\,\ar@{^{(}->}[r]\ar@{=}[d] & E \ar@{->>}[r]\ar@{->>}[d]&E_1\ar@{->>}[d]\\
		\,E_2\,\ar@{^{(}->}[r] &E'\ar@{->>}[r]&\,C.}
	\end{equation*}
	Since $E'$ and $E_2$ are $\nu_{b_0,w_0}$-semistable of the same phase, $C$ is also $\nu_{b_0,w_0}$-semistable.
	Therefore the right hand surjection contradicts the $\nu_{b_0,w_0}$-stability of $E_1$. 
\end{proof}

The next step is to analyse Chern character of the destabilising factors along a wall for class $\v$. 

\begin{Def}\label{Def.M-v}
Let $M(\v)$ be the set of all classes $(D',\, \beta',\, m') \in H^2(X, \Z) \oplus H^4(X, \Z) \oplus H^6(X, \Z)$ such that 
\begin{equation*}\label{M}
\frac{1}{2}\left(\frac{D'H^2}{H^3}\right)^2 -  \frac{D'^2H}{2H^3} + \frac{\beta'.H}{H^3} \,\leq\, \frac{DH^2}{2(H^3)^2} - \frac{1}{(H^3)^2} \qquad \text{and} \qquad m' \leq 
\frac{DH^2(DH^2 +H^3)}{6(H^3)^2}. 
\end{equation*}
\end{Def}

\begin{Prop}\label{prop.calsses of factors}
	The destabilising classes $v_i = (-1)^ie^{D_i}(1, 0, -\beta_i, -m_i)$ for $i=1, 2$ in Proposition \ref{prop.destabilising} satisfy 
	\begin{equation}\label{beta-i}
	\frac{1}{2}\left(\frac{D_iH^2}{H^3}\right)^2 -  \frac{D_i^2H}{2H^3} + \frac{\beta_i.H}{H^3} \leq \frac{1}{2}\left(k^2 -k \sqrt{k^2-2Q(\v)}\right), 
	\end{equation}
	and
	\begin{equation}\label{m-i}
	(-1)^{i+1}m_i \leq\  \frac{2}{3}\beta_i.H \left( \beta_i.H + \frac{1}{2H^3}\right). 
	\end{equation}
	In particular $(D_i, \beta_i, (-1)^{i+1}m_i) \in M(\v)$.  
\end{Prop}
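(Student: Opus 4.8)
\textit{Proof strategy.}
The plan for \eqref{beta-i} is to read the bound off from the geometry of the wall $\ell$ in the $(b,w)$-plane. Expanding $v_i=(-1)^i e^{D_i}(1,0,-\beta_i,-m_i)$, one checks that the left-hand side of \eqref{beta-i} is exactly $\frac{\Delta_H(v_i)}{2(H^3)^2}$, and that this equals $\tfrac12 x_i^2-y_i$, where $(x_i,y_i):=\Pi(v_i)$ and $x_i=\frac{D_i.H^2}{H^3}$; by \eqref{BOG} it is the nonnegative vertical drop from $\Pi(v_i)$ to the parabola $\partial U$. Since $E_1$ and $E_2$ share the slope of $E$ along $\ell$, and $\nu_{b,w}$-slopes are the gradients of the lines joining $(b,w)$ to the $\Pi$-points, both $\Pi(v_1)$ and $\Pi(v_2)$ lie on $\ell$. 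As $\ch_1(v_1)+\ch_1(v_2)=D$ gives $x_2-x_1=k$, the two points sit at horizontal distance $k$ on a line of slope $s/k$.

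I would then locate $x_1,x_2$ relative to the chord $\ell\cap U$. Writing $\lambda_-<\lambda_+$ for the two $b$-values at which $\ell$ meets $\partial U$, membership $E_1,E_2\in\Ab$ forces, through the positivity \eqref{65}, that $x_1\le b\le x_2$ for every $(b,w)\in\ell\cap U$; hence $x_1\le\lambda_-<\lambda_+\le x_2$. Because $\ell$ lies above $\ell_f$ (Proposition \ref{prop.destabilising}) and raising a line of fixed slope widens the chord it cuts from $\partial U$, the width $\delta:=\lambda_+-\lambda_-$ obeys $\delta\ge b_1-b_2=\sqrt{k^2-2Q(\v)}$ by \eqref{lower}. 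Writing $\tfrac12 x_i^2-y_i=\tfrac12(x_i-\lambda_-)(x_i-\lambda_+)$ and setting $a:=\lambda_--x_1\ge0$, $a':=x_2-\lambda_+\ge0$ (so that $a+a'+\delta=k$), the two drops become $\tfrac12 a(a+\delta)$ and $\tfrac12 a'(a'+\delta)$; each is increasing in its variable, so both are at most $\tfrac12 k(k-\delta)\le\tfrac12 k\big(k-\sqrt{k^2-2Q(\v)}\big)$, which is \eqref{beta-i}.

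For \eqref{m-i} the idea is to untwist to a genuine geometric object with $\ch_1=0$ and apply the Bogomolov--Gieseker inequality of Conjecture \ref{conjecture} at a single boundary point. By Proposition \ref{prop.destabilising} the rank-one factor $E_2$ is torsion-free, so $E_2\cong\cO_X(D_2)\otimes I_Z$ for a subscheme $Z$ of dimension $\le1$, with $\ch(I_Z)=(1,0,-\beta_2,-m_2)$; dually $E_1$ corresponds via Lemma \ref{lem.large volume limit}(b) to a PT stable pair $I^\bullet=(\cO_X\to F)$ with $\ch(I^\bullet)=(1,0,-\beta_1,m_1)$. Both $I_Z$ and $I^\bullet$ have $\ch_1=0$ and are $\nu_{b,w}$-stable for all $b<0$, so Conjecture \ref{conjecture} applies to them; letting $(b,w)\to(-\tfrac1{H^3},\tfrac1{2(H^3)^2})\in\partial U$ and using that \eqref{boglinear} is linear in $(b,w)$, the form collapses (its $w$-coefficient being $2(H^3)\beta_i.H$, its $b$-coefficient $3(-1)^{i+1}H^3 m_i$, and its constant $2(\beta_i.H)^2$) to precisely $(-1)^{i+1}m_i\le\tfrac23\beta_i.H\big(\beta_i.H+\tfrac1{2H^3}\big)$, i.e. \eqref{m-i}.

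Finally, membership $(D_i,\beta_i,(-1)^{i+1}m_i)\in M(\v)$ results from matching against Definition \ref{Def.M-v}. The first defining inequality is \eqref{beta-i} combined with \eqref{lower}, which together bound its left-hand side by $\tfrac{D.H^2}{2(H^3)^2}-\tfrac1{(H^3)^2}$; for the second, the Hodge index inequality $\big(\tfrac{D_i.H^2}{H^3}\big)^2\ge \tfrac{D_i^2.H}{H^3}$ fed back into \eqref{beta-i} gives $\beta_i.H<\tfrac k2$ (here $\beta_i.H\ge0$ since $\beta_i$ is effective), so that the increasing right-hand side of \eqref{m-i} is at most $\tfrac{D.H^2(D.H^2+H^3)}{6(H^3)^2}$, using $H^3\ge1$. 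The step I expect to demand the most care is the input to \eqref{m-i}: verifying that $I_Z$ and $I^\bullet$ remain $\nu_{b,w}$-stable down to the boundary at $b=-\tfrac1{H^3}$ so that Conjecture \ref{conjecture} may legitimately be evaluated there, together with the dualizing-and-twisting bookkeeping that produces the sign $(-1)^{i+1}$ on $m_i$.
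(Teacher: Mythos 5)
Your proof of \eqref{beta-i} is correct and is essentially the paper's own argument in cleaner algebraic form: the paper likewise identifies the left-hand side with the vertical drop from $\Pi(v_i)$ to $\partial U$, pins the $b$-coordinates of $\Pi(v_i)$ using membership of the factors in $\cA_b$ along the wall, and maximises the drop subject to the wall lying above $\ell_f$ (it phrases this with the endpoints $b_2<b_1$ of $\ell_f\cap\partial U$ and Figure \ref{blue}, you with the chord of $\ell$ itself and the identity $\tfrac12x_i^2-y_i=\tfrac12(x_i-\lambda_-)(x_i-\lambda_+)$). Your deduction of $(D_i,\beta_i,(-1)^{i+1}m_i)\in M(\v)$ at the end also matches the paper.

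The genuine gap is in \eqref{m-i}. Your computation is right: evaluating \eqref{boglinear} on a class $(1,0,-\beta,\mp m)$ at the point $\left(-\tfrac1{H^3},\tfrac1{2(H^3)^2}\right)\in\partial U$ collapses to exactly \eqref{m-i}, and this is indeed the mechanism behind the result the paper cites at this step, namely \cite[Proposition 8.3]{feyz:thomas-noether-loci}. What fails is your licence to evaluate there. The claim that $I_Z$ is $\nu_{b,w}$-stable for all $b<0$ is false: ideal sheaves of curves do have walls at negative $b$. For instance, if $Z$ lies on a divisor $S\in|H|$ then $\cO_X(-H)\into I_Z$ (with torsion quotient $I_{Z/S}$) produces a wall through $\Pi(\cO_X(-H))=(-1,\tfrac12)$ and $\Pi(I_Z)=(0,-\beta.H/H^3)$ whose trace in $U$ is a nonempty segment contained in $\{b<-1\}$ as soon as $\beta.H>H^3/2$, and $I_Z$ is strictly destabilised below it; the existence of precisely such walls is the engine of this whole circle of papers. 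What is true, and what the argument actually needs, is that no wall for the class $(1,0,-\beta,-m)$ crosses the vertical line $b=-\tfrac1{H^3}$, so that large-volume stability propagates down that single line to the parabola before one applies Conjecture \ref{conjecture} and takes the limit. That is \cite[Lemma 8.1]{feyz:thomas-noether-loci}, a nontrivial further input which your proposal neither proves nor cites.

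For the stable-pair factor the problem is worse than a missing lemma. When the cokernel $Q$ of $s\colon\cO_X\to F$ is nonzero, $I^\bullet$ has $H^0(I^\bullet)=I_C$ and $H^1(I^\bullet)=Q\neq0$, and no shift of $I^\bullet$ lies in $\cA_b$ for $b<0$ (one would need $\mu_H^+(I_C)=0\le b$). So $I^\bullet$ is not $\nu_{b,w}$-semistable anywhere in $\{b<0\}$, and Conjecture \ref{conjecture} cannot be applied to it near your evaluation point at all. The paper sidesteps this by passing to the torsion-free sheaf $I_C=H^0(I^\bullet)$ of class $(1,0,-\beta_1,m_1+\ell(Q))$ via the triangle $I_C\to I^\bullet\to Q[-1]$, applying the sheaf bound to $I_C$, and then using $m_1\le m_1+\ell(Q)$; alternatively one can keep the rank $-1$ object $E_1\otimes D_1^{-1}$ itself, which by Lemma \ref{lem.large volume limit}(b) and \cite[Lemma 8.2]{feyz:thomas-noether-loci} is semistable along the vertical line $b=+\tfrac1{H^3}$ down to $\partial U$, and evaluate $B$ at the reflected point $\left(+\tfrac1{H^3},\tfrac1{2(H^3)^2}\right)$, which yields the same bound. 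Your proof of \eqref{m-i} needs one of these repairs together with the no-wall lemma above.
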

\begin{proof}
	As in the proof of Proposition \ref{prop.destabilising}, we assume $b_2<b_1$ are the $b$-values of the intersection points of $\ell_f$ with $\partial U$. By \eqref{b}, 
	\begin{equation}\label{bounds for D-2}
	b_1 \leq  \frac{D_2.H^2}{H^3} \leq b_2+k\ .
	\end{equation}
	We know the wall $\ell$ is of slope $\frac{s}{k}$ and passes through 
	\begin{equation*}
	\Pi(v_2) = \left(\frac{D_2.H^2}{H^3},\ \frac{D_2^2.H}{2H^3} -\frac{\beta_2.H}{H^3}  \right). 
	\end{equation*}
	Since $\ell$ lies above or on the parallel line $\ell_f$, $\Pi(v_2)$ lies above or on $\ell_f$. By the classical Bogomolov inequality \eqref{BOG}, $\Pi(v_2)$ lies outside $U$. Therefore \eqref{bounds for D-2} implies that the vertical distance from $\Pi(v_2)$ to $\partial U$ which is equal to $\frac{1}{2}\left(\frac{D_2.H^2}{H^3}\right)^2 -  \left(\frac{D_2^2.H}{2H^3} - \frac{\beta_2.H}{H^3}\right)$ is maximum when $\frac{D_2.H^2}{H^3}$ is maximum and $\Pi(v_2)$ lies on $\ell_f$, see Figure \ref{blue}. 
	
	\begin{figure}[h]
		\begin{centering}
			\definecolor{zzttqq}{rgb}{0.27,0.27,0.27}
			\definecolor{qqqqff}{rgb}{0.33,0.33,0.33}
			\definecolor{uququq}{rgb}{0.25,0.25,0.25}
			\definecolor{xdxdff}{rgb}{0.66,0.66,0.66}
			
			\begin{tikzpicture}[line cap=round,line join=round,>=triangle 45,x=1.0cm,y=1.0cm]
			
			\fill[fill=blue!40!white](-3.8, 6.4) --(-3.8, 2.93)--(-2.35, 2.45)--(-2.35, 6.4) --(-3.8, 6.4);
			
			\fill[fill=blue!40!white](1.6,1.12) --(3, .67)--(3, 3.95)--(1.6 , 4) --(1.6, 1.12);
			
			\fill [fill=gray!40!white] (0,0) parabola (4,7.10) parabola [bend at end] (-4,7.10) parabola [bend at end] (0,0);
			
			\draw[->,color=black] (-5,0) -- (5,0);
			\draw  (5, 0) node [right ] {$b,\,\frac{\ch_1.H^2}{\ch_0H^3}$};
			
			\draw  (0,0) parabola (4,7.10); 
			\draw  (0,0) parabola (-4,7.10); 
			\draw  (4,7.10) node [above] {$w= \frac{b^2}{2}$};

			\draw[->,color=black] (0,-1) -- (0,7.3);
			\draw  (0, 7.3) node [above ] {$w,\,\frac{\ch_2.H}{\ch_0H^3}$};

			\draw [color=black] (3.5,.5) -- (-4,3);
			\draw [color=black] (3.5,1) -- (-4,3.5);
			
			\draw[color=black, dashed] (3,0)-- (3,4);
			\draw[color=black, dashed] (1.6,1.12) -- (1.6,0);
			\draw [dashed, color=black] (-3.8, 6.4) -- (-3.8, 0);
			\draw [dashed, color=black] (-2.35, 2.45) -- (-2.35, 0);
			
			
			\draw  (2.5, 1.33) node [above] {$\Pi(v_2)$};
			\draw  (3, 6) node [above] {\Large{$U$}};
			\draw  (1.6, 0) node [below] {$b_1$};
			\draw  (3 , 0) node [below] {$b_2+k$};
			\draw  (3.5, .5) node [right] {$\ell_f$};
			\draw  (3.5, 1) node [right] {$\ell$};
			\draw  (-3.31, 3.22) node [above] {$\Pi(v_1)$};
			\draw  (-2.35, 0) node [below] {$b_2$};
			\draw  (-3.8 , 0) node [below] {$b_1-k$};
			
			\begin{scriptsize}
			\fill (2.5, 1.33) circle (1.5pt);
			\fill (1.6,1.12) circle (1.5pt);
			\fill (1.6,0) circle (1.5pt);
			\fill (3,0) circle (1.5pt);
			\fill (3, .67) circle (1.5pt);
			\fill (3, 3.95) circle (1.5pt);
			
			\fill (-2.35, 2.45) circle (1.5pt);
			\fill (-3.8, 6.4) circle (1.5pt);
			\fill (-3.8, 0) circle (1.5pt);
			\fill (-3.8, 2.93) circle (1.5pt);
			\fill (-3.2, 3.23) circle (1.5pt);
			\fill (-2.35, 0) circle (1.5pt);

			\end{scriptsize}
			
			\end{tikzpicture}
			
			\caption{$\Pi(v_i)$ lies in the blue region.}
			
			\label{blue}
			
		\end{centering}
	\end{figure}
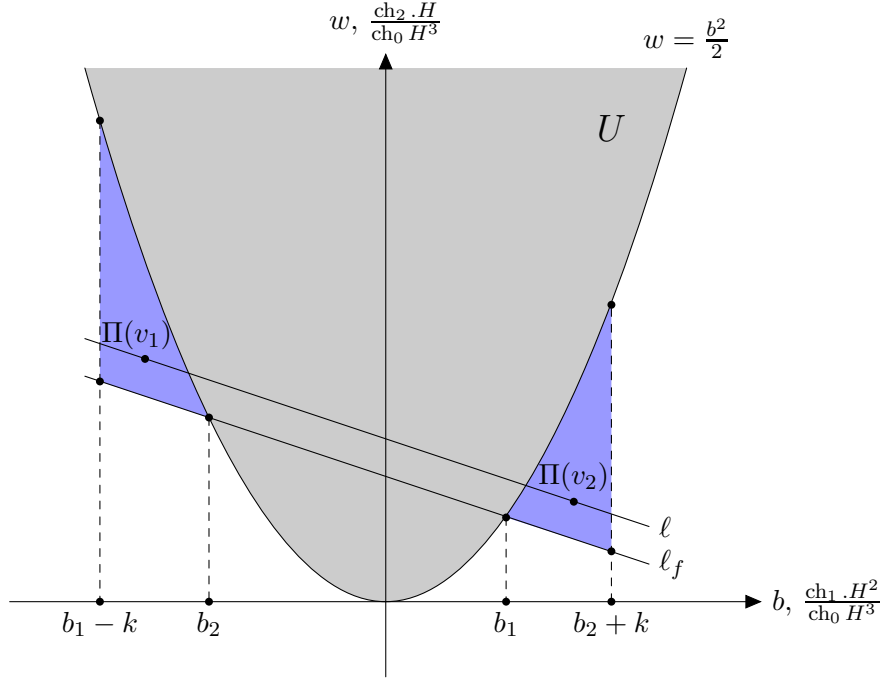
	Therefore	    
	\begin{align}\label{b-1}
	\frac{1}{2}\left(\frac{D_2.H^2}{H^3}\right)^2 -  \frac{D_2^2.H}{2H^3} + \frac{\beta_2.H}{H^3}\ & \leq \, \frac{1}{2}(b_2 +k)^2 - \left(\frac{s}{k} (b_2+k) + \frac{k^2}{8} - \frac{s^2}{2k^2} -\frac{1}{4}Q(v)\right)\nonumber\\
	& = \frac{1}{2} \left( b_2 +k - \frac{s}{k}\right)^2 - \frac{k^2}{8} +\frac{1}{4}Q(v)\nonumber\\
	& = \frac{1}{2} \left(k - \sqrt{\frac{k^2}{4} -\frac{1}{2}Q(v) }\right)^2
	-\frac{k^2}{8}+\frac{1}{4}Q(v)\nonumber\\
	& = \frac{1}{2}\left(k^2 -k \sqrt{k^2-2Q(v)}\right).  \nonumber
	\end{align}
	This proves \eqref{beta-i} for class $v_2$. Applying a similar argument proves it for class $v_1$.

	We know $E_2 \otimes D_2^{-1}$ is a torsion-free sheaf of class $(1, 0, -\beta_2, -m_2)$, thus \cite[Proposition 8.3]{feyz:thomas-noether-loci} implies 
	\begin{equation*}
	-m_2 \leq \frac{2}{3}\beta_2.H \left( \beta_2.H + \frac{1}{2H^3}\right).
	\end{equation*}
     We know $\big(E_1 \otimes \det(E_1)\big)^{\vee}[1]$ is a stable pair, so it lies in an exact triangle  
	\begin{equation*}
	E'\ \Into\, (E_1 \otimes D_1^{-1})^{\vee}[1]\, \To\hspace{-5.5mm}\To\, Q[-1],
	\end{equation*}
	with $Q$ a zero-dimensional sheaf and $E'$ a torsion-free sheaf of class $(1, 0, -\beta_1 , m_1+ \ell(Q))$ where $\ell(Q)$ is the length of $Q$. Thus \cite[Proposition 8.3]{feyz:thomas-noether-loci} implies 
	\begin{equation*}
	m_1 \leq m_1+ \ell(Q) \leq \frac{2}{3}\beta_1.H\left(\beta_1.H + \frac{1}{2H^3}\right). 
	\end{equation*}
	This completes the proof of \eqref{m-i}. 
	
	Finally we show that $(D_i, \beta_i, (-1)^{i+1}m_i) \in M(\v)$. The right hand side in \eqref{beta-i} is maximum when $Q(v)$ is maximum, so our assumption \eqref{bound for Q} implies that 
	\begin{align*}
	\frac{1}{2}\left(\frac{D_i.H^2}{H^3}\right)^2 -  \frac{D_i^2.H}{2H^3} + \frac{\beta_i.H}{H^3}\ \leq \ \frac{1}{2}\left(k^2 -k \sqrt{k^2-2Q(v)}\right) \leq\  & \frac{1}{2}\left(k^2-k(k-\frac{1}{H^3} + \frac{2}{k(H^3)^2}) \right)\\
	\ = &\ \frac{k}{2H^3} - \frac{1}{(H^3)^2}.
	\end{align*}
	By Hodge index theorem $0 \leq \frac{1}{2}\left( \frac{D_i.H^2}{H^3}\right)^2 - \frac{D_i^2H}{2H^3}$, hence 
	\begin{equation}\label{the second bound}
	\beta_iH \leq H^3\left( \frac{1}{2}\left(\frac{D_i.H^2}{H^3}\right)^2 -  \frac{D_i^2.H}{2H^3} \right) +\beta_iH \leq \frac{k}{2} - \frac{1}{H^3} < \frac{k}{2}.
	\end{equation}
	Thus \eqref{m-i} implies 
	\begin{equation*}
	(-1)^{i+1}m_i \leq  \frac{2}{3} \cdot \frac{k}{2} \left(\frac{k}{2} + \frac{1}{2H^3} \right) \leq \frac{k(k+1)}{6}
	\end{equation*}
	as claimed. 
\end{proof}

The final step to prove Theorem \ref{thm-rank zero-wall-crossing} is to show the converse of Proposition \ref{prop.destabilising}.
\begin{Prop}\label{prop.allowed classes}
	Take objects $E_1, E_2 \in \cD(X)$ of classes $v_1 = -e^{D_1}(1, 0, -\beta_1, -m_1)$ and $v_2 = e^{D_2}(1, 0, -\beta_2, -m_2)$ such that $v_1+v_2 =\v$, and 
	\begin{enumerate}
		\item $\big(E_1 \otimes \det E_1\big)^{\vee}[1]$ is a stable pair, 
		\item $E_2$ is a torsion-free sheaf, 
		\item $	\frac{1}{2}\left(\frac{D_i.H^2}{H^3}\right)^2 - \frac{D_i^2.H}{2H^3} +\frac{\beta_i.H}{H^3} \leq \frac{k}{2H^3} - \frac{1}{(H^3)^2}$ for $i=1, 2$.
	\end{enumerate}
	Then there is a point $(b, w) \in U$ such that $E_1$ and $E_2$ are $\nu_{b, w}$-stable of the same slope. In particular, their extensions are strictly $\nu_{b,w}$-semistable objects of class $\v$.  
\end{Prop}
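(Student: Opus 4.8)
The plan is to build, from the two given objects, a single point $(b,w)\in U$ witnessing both equal slope and simultaneous stability; everything is organized around the line $\ell\subset\mathbb R^2$ through the two points $\Pi(v_1)$ and $\Pi(v_2)$.

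First I would record the geometry. Writing $\mu_i:=\frac{D_i.H^2}{H^3}=\mu_H(E_i)$ and $y_i:=\frac{D_i^2.H}{2H^3}-\frac{\beta_i.H}{H^3}$, one checks $\Pi(v_i)=(\mu_i,y_i)$ for $i=1,2$ (the sign from $\ch_0(v_1)=-1$ cancels, so both points have the same form). By Lemma \ref{lem.large volume limit}, hypotheses (1) and (2) say precisely that $E_1,E_2$ are $\nu_{b,w}$-stable for $w\gg0$, so \eqref{BOG} applies and $\delta_i:=\tfrac12\mu_i^2-y_i=\frac{\Delta_H(v_i)}{2(H^3)^2}\ge0$; thus each $\Pi(v_i)$ lies on or below $\partial U$ at vertical distance $\delta_i$. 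Hypothesis (3) is exactly the bound $\delta_i\le \frac{k}{2H^3}-\frac1{(H^3)^2}=:\Delta$, and $D_2-D_1=D$ gives $\mu_2-\mu_1=k$. The line $\ell$ through $\Pi(v_1),\Pi(v_2)$ has slope $\frac{y_2-y_1}{k}=\frac sk$ (this reproduces $\ch_2(\v).H\big/\ch_1(\v).H^2$), so $\ell$ is parallel to the walls for $\v$; and for every $(b,w)\in\ell$ the three points $(b,w),\Pi(v_1),\Pi(v_2)$ are collinear, i.e. $\nu_{b,w}(E_1)=\nu_{b,w}(E_2)=\mathrm{slope}(\ell)$.

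Next I would show $\ell\cap U\ne\emptyset$. Writing $\ell$ as $w=L(b)$, the gap $g(b):=L(b)-\tfrac12 b^2$ is a downward parabola with $g(\mu_i)=-\delta_i\le0$, whose maximum computes to $g_{\max}=\tfrac{k^2}{8}-\tfrac{\delta_1+\delta_2}{2}+\tfrac{(\delta_1-\delta_2)^2}{2k^2}\ge \tfrac{k^2}{8}-\Delta$. The inequality $\tfrac{k^2}8-\Delta>0$ is equivalent to $k^2-\tfrac{4k}{H^3}+\tfrac8{(H^3)^2}>0$, and the left-hand side has negative discriminant in $k$, so it holds unconditionally. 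Hence $g_{\max}>0$ and $\ell\cap U$ is a nonempty open $b$-interval $(b_-,b_+)$ with vertex $b^\ast$ near $\tfrac{\mu_1+\mu_2}2$.

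The heart of the argument is to upgrade this to simultaneous stability along $\ell\cap U$, and this is the step I expect to be the main obstacle. By Proposition \ref{prop. locally finite set of walls}(a) every wall for the class $v_i$ passes through $\Pi(v_i)\notin U$; since $\ell$ also passes through $\Pi(v_i)$, it meets each such wall only at $\Pi(v_i)$, so no wall for $v_1$ or $v_2$ crosses the interior segment $\ell\cap U$, and the $\nu_{b,w}$-stability of each fixed object $E_i$ is constant along $\ell\cap U$. It therefore suffices to exhibit one point of $\ell\cap U$ on which $E_2$ is stable and one on which $E_1$ is stable. For this I invoke the vertical-line statements \cite[Lemma 8.1]{feyz:thomas-noether-loci} (making $E_2$ stable on $b=\mu_2-\tfrac1{H^3}$ throughout $U$) and \cite[Lemma 8.2]{feyz:thomas-noether-loci} (making $E_1$ stable on $b=\mu_1+\tfrac1{H^3}$). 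The remaining, and hardest, task is the quantitative check that $\ell$ crosses each of these two vertical lines \emph{inside} $U$, i.e. that $\mu_2-\tfrac1{H^3}\le b_+$ and $b_-\le\mu_1+\tfrac1{H^3}$. Setting $v:=\mu_2-b^\ast$ and using $g(b^\ast)=\tfrac12 v^2-\delta_2$, the condition $\mu_2-\tfrac1{H^3}\le b_+$ reduces to $v\ge H^3\delta_2+\tfrac1{2H^3}$; since $v>\tfrac k2-\tfrac1{2H^3}$ while hypothesis (3) gives $H^3\delta_2+\tfrac1{2H^3}\le\tfrac k2-\tfrac1{2H^3}$, this holds, and the symmetric estimate handles $b_-$ and $E_1$. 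This is exactly where the precise constant in (3) is used, and also where one must rule out the borderline case that $\ell$ itself is a wall for some $v_i$ — excluded precisely because the reference point supplied by Lemmas 8.1–8.2 exhibits $E_i$ as \emph{stable} there.

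Finally, by constancy both $E_1$ and $E_2$ are $\nu_{b,w}$-stable at every point of $\ell\cap U$; fix any such $(b,w)$. Since $E_1,E_2$ have equal $\nu_{b,w}$-slope, any extension $E$ of $E_1$ and $E_2$ (for instance $E_1\oplus E_2$) is $\nu_{b,w}$-semistable of that slope and admits a subobject of the same slope, hence is strictly $\nu_{b,w}$-semistable; its class is $v_1+v_2=\v$. This proves the proposition.
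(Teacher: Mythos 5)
Your proposal is sound and, at bottom, runs along the same lines as the paper's proof: both arguments work on the line $\ell$ through $\Pi(v_1)$ and $\Pi(v_2)$ of slope $s/k$, both use hypothesis (3) as the sole quantitative input, and both reduce stability of $E_1,E_2$ along $\ell\cap U$ to \cite[Lemmas 8.1 and 8.2]{feyz:thomas-noether-loci}. The difference is in how the geometry is verified. The paper introduces the auxiliary line $\ell_{v_2}$ through $\Pi(v_2)$ and the point of $\partial U$ at $b=\mu_H(E_2)-\frac{1}{H^3}$, derives \eqref{upper-1} from $v_1+v_2=\v$, the Hodge index theorem and $\beta_2.H\ge 0$, and deduces $\mathrm{slope}(\ell)<\mathrm{slope}(\ell_{v_2})$, i.e.\ that $\ell\cap U$ lies above $\ell_{v_2}$; writing $\mu_2:=\mu_H(E_2)$, this is literally the same geometric fact as your ``$\ell$ meets the vertical line $b=\mu_2-\frac{1}{H^3}$ inside $U$''. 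Your vertex computation with the gap function $g$ is a correct and arguably more transparent substitute, and it makes the nonemptiness of $\ell\cap U$ explicit, a point the paper leaves implicit. One small omission: ``crossing inside $U$'' requires $b_-<\mu_2-\frac{1}{H^3}$ in addition to $\mu_2-\frac{1}{H^3}<b_+$; both reduce to the inequality $v>H^3\delta_2+\frac{1}{2H^3}$ (your $v=\mu_2-b^\ast$) which you prove strictly, so this costs nothing.

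The genuine weak point is your treatment of the case that $\ell$ is itself a wall for $v_1$ or $v_2$. Your exclusion --- ``because the reference point supplied by Lemmas 8.1--8.2 exhibits $E_i$ as stable there'' --- does not work: by Proposition \ref{prop. locally finite set of walls}(d), a wall for the \emph{class} $v_i$ only requires that \emph{some} object of class $v_i$ be semistable there with a semi-destabilising subobject, so your particular object $E_i$ being stable at a point of $\ell$ is perfectly compatible with $\ell$ being a wall for $v_i$; in that case the chamber constancy you invoke along $\ell\cap U$ is simply unavailable. Fortunately the fix is already in your hands. Lemma 8.1, in the form the paper quotes it, asserts that \emph{no wall for the class $v_2$ passes through the vertical line $b=\mu_2-\frac{1}{H^3}$ inside $U$}; since you have shown that $\ell$ does pass through this line inside $U$, $\ell$ cannot be a wall for $v_2$ (and symmetrically for $v_1$ via Lemma 8.2), after which your constancy argument goes through. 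Alternatively, argue as the paper does and avoid constancy along $\ell$ altogether: the no-wall statement, together with large-volume stability of $E_2$, gives $\nu_{b,w}$-stability of $E_2$ on the entire connected, wall-free region of $U$ above $\ell_{v_2}$ with $b<\mu_2$, a region which contains $\ell\cap U$; the symmetric statement holds for $E_1$, so \emph{every} point of $\ell\cap U$ witnesses the conclusion.
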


\begin{proof}
	Taking $\ch_2$ from $v_1+v_2 =\v$ and applying Hodge index Theorem give 
	\begin{align*}
	\frac{\beta_1.H}{H^3} - \frac{D_1^2.H}{2H^3} &= \frac{\beta_2.H}{H^3} - \frac{D_2^2.H}{2H^3} + s\\
	& \geq -\frac{1}{2}\left(\frac{D_2.H^2}{H^3}\right)^2+ \frac{\beta_2.H}{H^3} + s\\
	& = -\frac{1}{2}\left(\frac{D_1.H^2}{H^3}+k\right)^2+ \frac{\beta_2.H}{H^3} + s\,.
	\end{align*}
	Since $\beta_2.H \geq 0$ by \eqref{BOG}, we get
	\begin{align}\label{l-bound}
	\frac{1}{2}\left(\frac{D_1H^2}{H^3}\right)^2 -  \frac{D_1^2H}{2H^3} + \frac{\beta_1.H}{H^3}  \geq -\frac{k^2}{2} -k \frac{D_1H^2}{H^3} +s
	\end{align}	
	The assumption in part (c) gives
	\begin{align*}
	\frac{k}{2H^3} > \frac{k}{2H^3} - \frac{1}{(H^3)^2} \geq \frac{1}{2}\left(\frac{D_1H^2}{H^3}\right)^2 -  \frac{D_1^2H}{2H^3} + \frac{\beta_1.H}{H^3}.  
	\end{align*}	
	Combining this with \eqref{l-bound} implies 
	\begin{align}\label{upper-1}
	\frac{s}{k} -\frac{k}{2}- \frac{1}{2H^3} < 	\frac{D_1.H^2}{H^3} = 	\frac{D_2.H^2}{H^3} -k\, . 
	\end{align}	
	By applying a similar argument for class $v_2$ and using part (c), one can show 
	\begin{equation*}
	\frac{D_1.H^2}{H^3}  < \frac{s}{k} -\frac{k}{2}  + \frac{1}{2H^3} \ . 
	\end{equation*} 	
	
	Let $\ell$ be the line passing through $\Pi(v_2)$ of slope $\frac{s}{k}$ (which also passes through $\Pi(v_1)$). We show that both $E_1$ and $E_2$ are $\nu_{b,w}$-stable for $(b,w) \in \ell \cap U$. Let $\ell_{v_2}$ be the line passing through
	\begin{equation*}
	\Pi(v_2) = \left(\frac{D_2.H^2}{H^3},\ \frac{D_2^2.H}{2H^3} -\frac{\beta_2.H}{H^3}  \right)
	\end{equation*}
	and the point 
	$\left(\frac{D_2.H^2}{H^3} - \frac{1}{H^3} ,\ \frac{1}{2}\left(\frac{D_2.H^2}{H^3} - \frac{1}{H^3} \right)^2  \right)$ on $\partial U$. By \cite[Lemma 8.1]{feyz:thomas-noether-loci}, there is no wall for class $v_2$ passing through the vertical line $b = \frac{D_2.H^2}{H^3} - \frac{1}{H^3}$. Thus $E_2$ is $\nu_{b,w}$-stable for any $(b,w) \in U$ above $\ell_{v_2}$ when $b < \frac{D_2.H^2}{H^3}$. 
	
	We claim the line segment $\ell \cap U$ lies above $\ell_{v_2}$. Otherwise, slope of $\ell_{v_2}$ is smaller than or equal to $\frac{s}{k}$, i.e.
	\begin{align*}
	\frac{s}{k} \geq \ &  \frac{\frac{D_2^2.H}{2H^3} -\frac{\beta_2.H}{H^3} - \frac{1}{2} \left(\frac{D_2.H^2}{H^3} - \frac{1}{H^3}\right)^2  }{\frac{1}{H^3}} \\
	= \ & \frac{-\frac{1}{2}(\frac{D_2.H^2}{H^3})^2 + \frac{D_2^2.H}{2H^3} -\frac{\beta_2.H}{H^3}  -\frac{1}{2(H^3)^2} + \frac{D_2.H^2}{(H^3)^2}  }{\frac{1}{H^3}}
	\end{align*}
	This implies 
	\begin{align*}
	\frac{1}{2}\left(\frac{D_2.H^2}{H^3}\right)^2 - \frac{D_2^2.H}{2H^3} +\frac{\beta_2.H}{H^3} \geq &\ -\frac{s}{kH^3} -\frac{1}{2(H^3)^2} + \frac{D_2.H^2}{(H^3)^2}\\
	\overset{\eqref{upper-1}}{>} & \ \frac{k}{2H^3} - \frac{1}{(H^3)^2} 
	\end{align*}
	which is not possible by part (c). Therefore $E_2$ is $\nu_{b,w}$-stable for any $(b,w) \in \ell \cap U$. By applying a similar argument, one can show the same holds for $E_1$, so any extension of $E_1$ and $E_2$ is strictly $\nu_{b,w}$-semistable for any $(b,w) \in \ell \cap U$, as claimed.
\end{proof}

We now apply the wall-crossing results in this section to prove Theorem \ref{thm-rank zero-wall-crossing}. So assume that $X$ is a Calabi-Yau 3-fold: $K_X\cong\cO_X$ and $H^1(\cO_X)=0$.
\begin{proof}[Proof of Theorem \ref{thm-rank zero-wall-crossing}]
	Part (i) follows from Lemma \ref{lem-first part of theorem 1}. For part (ii), we know in the large volume limit when $w \gg 0$, we have $\J_{b, w}(\v) = \J_{\text{ti}}(\v)$. By Lemma \ref{lem-no-strictly-semistable}, there is no strictly-tilt semistable sheaf of class $\v$, so 
	\begin{equation*}
	\J_{b, w \gg 0}(\v) = \J_{\text{ti}}(\v) = \J(\v). 
	\end{equation*}
	On the other hand, we know $\J_{b,w}(\v) = 0$ for $(b,w) \in U$ below $\ell_f$. Let $\ell$ be a wall for class $\v$ between large volume limit and $\ell_f$. Let $(b, w^{\pm})$ be points just above and below the wall $\ell$ and $(b, w_0) \in \ell$. Proposition \ref{prop.destabilising} implies that there are precisely two destabilising factors $v_1, v_2$ along the wall $\ell$. We know the slope function $\nu_{b, w}(v_1)$ is an increasing linear function with respect to $w$ and $\nu_{b, w}(v_2)$ is a decreasing linear function with respect to $w$. Also $\nu_{b, w_0}(v_1) = \nu_{b, w_0}(v_2)$, thus
	\begin{equation*}
	\nu_{b,w_+}(v_1) > \nu_{b,w_+}(v_2) \qquad \text{and} \qquad 	\nu_{b,w_-}(v_1) < \nu_{b,w_-}(v_2).
	\end{equation*}  
	This implies 
	\begin{equation*}
	U(v_1, v_2, (b, w_+), (b, w_-)) = 1 \qquad\ \text{and} \qquad  	U(v_2,v_1, (b, w_+), (b, w_-)) = -1
	\end{equation*}
	The wall-crossing formula \eqref{wcf} gives
	\begin{equation*}
	J_{b, w_-}(\v) = \J_{b, w_+}(\v) + \sum_{
\substack{
v_1= -e^{D_1}(1, 0, \beta_1, m_1)\\
v_2= e^{D_2}(1, 0, \beta_2, m_2)\\
v_1+v_2 =\v\\
\Pi(v_i) \in \ell \ \text{for $i=1, 2$}
}	
}(-1)^{\chi(v_1, v_2) +1}\chi(v_1 ,v_2)\J_{b, w_+}(v_1)\J_{b,w_+}(v_2).
	\end{equation*}    
	By Proposition \ref{prop.destabilising}, $\J_{b,w_+}(v_i) = \J_{b,w \gg 0}(v_i)$. We may assume $\mu_H(E_1) < b < \mu_H(E_2)$. Thus Lemma \ref{lem.large volume limit} shows that 
	$$
	\J_{b, w \gg 0}(v_2) = \J(v_2) = \J(1, 0, -\beta_2, -m_2)
	$$ 
	The latter counts torsion-free sheaves of class $(1, 0, -\beta_1, -m_1)$ which are ideal sheaves of 1-dimensional subscheme after tensioning by a line bundle $L$ with torsion $c_1$. Thus 
	\begin{equation*}
	\J_{b,w_+}(v_2) = \J(1, 0, -\beta_2, -m_2) = \big(\#H^2(X,\Z)_{\mathrm{tors}}\big) I_{m_2, \beta_2}. 
	\end{equation*}
	Similarly, we know $J_{b,w_+}(v_1) = J_{\infty}(-1, 0, \beta_1, m_1)$ where the latter counts dual of stable pairs of class $(-1, 0, \beta_1, -m_1)$ up to tensoring by a line bundle with torsion $c_1$-class, thus
	\begin{equation*}
	J_{b,w_+}(v_1) = \big(\#H^2(X,\Z)_{\mathrm{tors}}\big) P_{-m_1, \beta_1}.  
	\end{equation*}  
	Summing up over all walls for class $\v$ between the large volume limit and $\ell_f$, and applying Proposition \ref{prop.calsses of factors} and Proposition \ref{prop.allowed classes} imply the final wall-crossing formula in Theorem \ref{thm-rank zero-wall-crossing}.  
\end{proof}

\begin{Lem}\label{lem.quintic}
	Theorem \ref{thm-rank zero-wall-crossing} holds if $X$ is a smooth projective quintic threefold or a smooth projective threefold of complete intersection of quadratic and quartic hypersurfaces in $\PP^5$. 
\end{Lem}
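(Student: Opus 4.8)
The proof of Theorem \ref{thm-rank zero-wall-crossing} given above assumed the full strength of Conjecture \ref{conjecture}, namely that \eqref{quadratic form} holds at \emph{every} $(b,w) \in U$. For the quintic and for the complete intersection of a quadric and a quartic in $\PP^5$ this is only known on a restricted set of weak stability conditions, by \cite{chunyi:stability-condition-quintic-threefold} and \cite{liu:bg-ineqaulity-quadratic} respectively; write $R \subset U$ for that set. The plan is therefore to revisit the proof of Theorem \ref{thm-rank zero-wall-crossing} and verify that Conjecture \ref{conjecture} is never invoked outside $R$.

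First I would catalogue the appeals to Conjecture \ref{conjecture} made in Section \ref{section.wall}. There are three: the identification of the final line $\ell_f$ in Proposition \ref{prop.destabilising} through \eqref{bg-conjecture-our use}, applied to $\nu_{b,w}$-semistable objects of the rank-zero class $\v$; the comparison of $\ell_f$ with $\ell_\v$ in Lemma \ref{lem-first part of theorem 1}; and the stability of the rank $\pm1$ destabilising factors $E_1,E_2$, which runs through \cite[Lemmas 8.1, 8.2 and Proposition 8.3]{feyz:thomas-noether-loci} and so ultimately rests on \eqref{quadratic form} for the factor classes. The key observation is that in each step the tested object is $\nu_{b,w}$-semistable not at a single point but throughout a connected region of $U$ — above $\ell_\v$ for $\v$ (there are no walls there, by \cite[Lemma B.3]{feyz:rank-r-dt-theory-from-0}), and throughout a chamber for $E_1,E_2$ by Proposition \ref{prop. locally finite set of walls} — so I am free to evaluate \eqref{quadratic form} wherever in that region is convenient.

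The two tools that make this freedom effective are that wall-and-chamber structure and the twist invariance of the quadratic form: tensoring by $\cO_X(n)$ fixes $B_{b,w}$ and $\Delta_H$ while translating $b \mapsto b+n$, because $\mu_H(\cO_X(n)) = n$. Using them, each appeal to Conjecture \ref{conjecture} — always concerning a single object that is stable across a whole chamber — can be moved independently to a point whose $b$-coordinate sits inside $R$. In particular the large separation $b_1 - b_2 \approx k$ of the two factor slopes $\mu_H(E_1) \approx b_2$ and $\mu_H(E_2) \approx b_1$, pinned near the endpoints $b_1,b_2 = \tfrac{s}{k} \pm \sqrt{\tfrac{k^2}{4} - \tfrac12 Q(\v)}$ of $\ell_f$ by \eqref{b} and \eqref{lb}, is harmless, since $E_1$ and $E_2$ are twisted separately. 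What the bound \eqref{bound for Q} buys, via the width estimate \eqref{lower}, is a margin of order $\tfrac{1}{H^3}$ around these positions, and on the quintic (where $H^3 = 5$ and $k \in \tfrac15\Z$) this is exactly what lets each relevant stability region overlap $R$.

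The hard part will be the explicit matching at the end: reconciling the precise shape of $R$ — which covers only part of each twisting period, so that integer twists of it leave genuine gaps — with the explicit location \eqref{line ell-f} of $\ell_f$, with the walls for $\v$ lying between $\ell_f$ and the large volume limit, and with the auxiliary lines from \cite[Lemmas 8.1, 8.2]{feyz:thomas-noether-loci} that control $E_1$ and $E_2$. I expect that unwinding \eqref{bound for Q} shows each of these configurations meets $U$ over a $b$-range broad enough, relative to the gaps in $R$, to intersect $R$ after a twist, so that the restricted inequality is all the argument ever uses. The case in which only a weaker form of \eqref{quadratic form} is available leads to the correspondingly weaker statement recorded in Remark \ref{rem.final}.
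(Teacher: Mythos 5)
Your overall strategy coincides with the paper's: catalogue the appeals to Conjecture~\ref{conjecture} in Section~\ref{section.wall} and check that each one can be evaluated inside the restricted region $R$ where \cite{chunyi:stability-condition-quintic-threefold,liu:bg-ineqaulity-quadratic} prove the inequality, using the fact that the tested object is semistable on a whole region rather than at a single point. But the mechanism you propose for actually landing in $R$ fails. Tensoring by $\cO_X(n)$ induces the shear $(b,w)\mapsto(b+n,\,w+nb+\tfrac12n^2)$ on $U$, which preserves both $w-\tfrac12b^2$ and the fractional part $b-\lfloor b\rfloor$; since $R$ is cut out by the condition $w>\tfrac12b^2+\tfrac12(b-\lfloor b\rfloor)(\lfloor b\rfloor-b+1)$, both $R$ \emph{and its complement} are invariant under integer twists. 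You notice this yourself ("integer twists of it leave genuine gaps"), yet your concluding step still asks each configuration "to intersect $R$ after a twist" -- no twist ever moves a point of $U\smallsetminus R$ into $R$, so the step you defer with "I expect that unwinding \eqref{bound for Q}\dots" cannot be completed by this route.

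The missing idea is horizontal, not twist-equivariant: $R$ contains the entire vertical ray $\{b_0\}\times(\tfrac12b_0^2,\infty)$ over every \emph{integer} $b_0$; walls and chambers for the rank-zero class $\v$ are unions of lines of slope $s/k$, along which both semistability and $B_{b,w}(\v)$ are constant; so it suffices that every relevant line of slope $s/k$ crosses some vertical line $b=b_0\in\Z$ inside $\overline{U}$. This is a statement about the horizontal chord length $b_1-b_2$ of $\ell_f$, and here your arithmetic is backwards: on the quintic, $\Pic(X)=\Z.H$ forces $k=D.H^2/H^3\in\Z$, not $k\in\tfrac15\Z$ (the latter is the bound on a general polarised threefold, and a chord of length about $\tfrac15$ need not meet any integer vertical line, so with your integrality the argument would genuinely break). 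With $k\in\Z$, \eqref{lower} gives $b_1-b_2>k-\tfrac1{H^3}+\tfrac2{k(H^3)^2}\geq1$ for $k\geq2$, while $k=1$ requires a separate argument: $Q(\v)\in\tfrac12\Z$ together with \eqref{equiavalent bound for Q} forces $Q(\v)\leq0$, whence $b_1-b_2=\sqrt{1-2Q(\v)}\geq1$ again. Neither computation appears in your proposal, and they are the actual content of the lemma. Two smaller points: \cite[Lemmas 8.1, 8.2]{feyz:thomas-noether-loci} do not rest on Conjecture~\ref{conjecture} at all (they are elementary wall geometry), so the only factor-class input needing attention is \cite[Proposition 8.3]{feyz:thomas-noether-loci}; and for that one does not rerun any stability argument -- \cite[Theorem 3.2]{feyz:thomas-noether-loci} already establishes it assuming only the restricted inequality, so this part of the proof is a citation rather than a new twisting argument.
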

\begin{proof}
	By \cite{chunyi:stability-condition-quintic-threefold, liu:bg-ineqaulity-quadratic}, Conjecture \ref{conjecture} holds on $X$ for $(b,w)$ satisfying 
	\begin{equation}\label{in for b, w}
	w\ >\ \frac{1}{2} b^2 + \frac{1}{2}\big(b - \lfloor b \rfloor\big)\big (\lfloor b \rfloor - b +1\big). 
	\end{equation}
	Hence if $b \in \mathbb{Z}$, it holds for any $w > \frac{1}{2}b^2$. To prove Theorem \ref{thm-rank zero-wall-crossing}, we applied Conjecture \ref{conjecture} in two places: (1) to find the line $\ell_f$ in \eqref{bg-conjecture-our use} and (2) in the proof of \cite[Proposition 8.3]{feyz:thomas-noether-loci}. It has been shown in \cite[Theorem 3.2]{feyz:thomas-noether-loci} that (2) holds true if Conjecture \ref{conjecture} is true for $(b,w)$ satisfying \eqref{in for b, w}. 
	
	For case (1), since $\Pic(X) = \mathbb{Z}.H$, we know $k \in \Z$. As shown in \eqref{lower}, the line $\ell_f$ intersects $\partial U$ at two points with $b_1-b_2 \geq 1$ if $k \geq 2$, so $\ell_f$ intersects a vertical line $b =b_0 \in \mathbb{Z}$ at a point in the closure $\overline{U}$ as we required. If $k=1$, then $Q(v) \in \frac{1}{2}\mathbb{Z}$, so the bound \eqref{equiavalent bound for Q} implies that $Q(v) \leq 0$. Thus \eqref{lower} implies again $b_1-b_2 \geq 1$.  
\end{proof}

\section{OSV conjecture}\label{section.osv}

In this section, we prove a slight modification of Toda's formulation of OSV conjecture \cite[Conjecture 1.1]{toda:bogomolov-counting}. Part (i) of \cite[Conjecture 1.1]{toda:bogomolov-counting} follows from Theorem \ref{thm-rank zero-wall-crossing}(i). Thus we only consider the second part of \cite[Conjecture 1.1]{toda:bogomolov-counting}.


Fix $k >0$. We consider the invariants $\J(0, kH, \beta, m)$ which counts $H$-Gieseker semistable sheaves of Chern character $(0, kH, \beta, m)$\footnote{We replaced the class $(0, nH, -\beta, -m)$ in Toda's notations \cite{toda:bogomolov-counting} by $(0, kH, \beta, m)$.}. The generating series of these invariants is defined by 
\begin{equation*}
\mathcal{Z}^{k}_{D4}(x, y) \coloneqq \sum_{\beta, m} \ \J(0, kH, \beta, m)\,x^{-m}y^{-\beta}\ .  
\end{equation*}
  Define the subset
  \begin{equation*}
  N(k) \coloneqq  \left\{ D \in H^2(X,\mathbb{Z}) \colon 	\left(\frac{D.H^2}{H^3}\right)^2 - \frac{D^2.H}{H^3}\, <\, \frac{k}{2H^3}   \right\} \subset H^2(X, \mathbb{Z}). 
  \end{equation*}
  If $\Pic(X) = \mathbb{Z}.H$, for instance, then $N(k) =H^2(X, \mathbb{Z})$. For any $\epsilon >0$, consider the generating series
\begin{align*}
&\mathcal{Z}^{k, \epsilon}_{D6-\overline{D6}}(x, y , z) \coloneqq\\ &\sum_{\substack{D_1, D_2 \,\in\, N(k)\\D_2-D_1=kH}}
x^{\frac{D_1^3}{6} - \frac{D_2^3}{6}}\, y ^{\frac{D_1^2}{2}-\frac{D_2^2}{2}}\,z^{\frac{k^3H^3}{6} + \frac{kH.c_2(X)}{12}} 
\qe^{k, \epsilon}(xz^{-1}, x^{D_2}yz^{-kH})\p^{k, \epsilon}(xz^{-1}, x^{-D_1}y^{-1}z^{-kH})
\end{align*}
where 
\begin{equation*}
\p^{k, \epsilon} (x, y) \coloneqq \sum_{(\beta, -m)\, \in\, C(k, \epsilon)} \p_{m,, \beta}\; x^{m}y^{\beta}\ , \qquad \qe^{k, \epsilon} (x, y) \coloneqq \sum_{(\beta, -m) \,\in\, C(k, \epsilon)} \qe_{m,\, \beta} \;x^{m}y^{\beta}.
\end{equation*}
and
\begin{equation}\label{C}
C(k, \epsilon) \coloneqq \left\{(\beta, m) \in H^4(X, \Z) \oplus H^6(X, \Z) \colon \beta.H < \epsilon k^2\ ,\  m < \epsilon k^3  \right\}. 
\end{equation}
 A slight modification of \cite[Conjecture 1.1(ii)]{toda:bogomolov-counting} says the following:
\begin{Thm}\label{thm-toda-osv}
	Let $X$ be a smooth projective Calabi-Yau 3-fold. 
	For any $\xi \geq 1$, there are $\mu >0$, $\delta>0$ and a constant $k(\xi, \mu) >0$ which depends only on $\xi$, $\mu$ such that for any $k> k(\xi, \mu)$, we have the equality of the generating series, 
	\begin{equation}\label{derivative}
	\mathcal{Z}_{D4}^k(x,y)= \big(\#H^2(X,\Z)_{\mathrm{tors}}\big)^2\left.\frac{\partial}{\partial z}\mathcal{Z}^{\ k, \,\epsilon = \frac{\delta}{k^{\xi}}}_{D6-\overline{D6}}(x, y, z)\right|_{z=-1} 
	\end{equation}
	modulo terms of $x^{m}y^{\beta}$ with 
	\begin{equation}\label{terms}
	-\frac{H^3}{24}k^3\left(1- \frac{\mu}{k^{\xi}}\right) \leq m + \frac{(\beta.H)^2}{2kH^3}.
	\end{equation}
\end{Thm}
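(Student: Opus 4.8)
The plan is to derive Theorem \ref{thm-toda-osv} directly from the explicit formula in Theorem \ref{thm-rank zero-wall-crossing}(ii) by carefully matching the combinatorial structure of the wall-crossing sum against the coefficient extracted from the product $\mathcal{Z}^{k,\epsilon}_{D6-\overline{D6}}$. First I would unfold both sides as power series. The left-hand side $\mathcal{Z}^k_{D4}(x,y)$ has $\J(0,kH,\beta,m)$ as the coefficient of $x^{-m}y^{-\beta}$, so for a fixed target class $\v=(0,kH,\beta,m)$ the content of \eqref{derivative} is a single scalar identity: $\J(\v)$ should equal $\big(\#H^2(X,\Z)_{\mathrm{tors}}\big)^2$ times the $z$-derivative at $z=-1$ of the corresponding coefficient of $\mathcal{Z}^{k,\epsilon}_{D6-\overline{D6}}$. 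The job is then to show that this extracted coefficient reproduces exactly the sum
\begin{equation*}
\sum_{\substack{v_1=-e^{D_1}(1,0,-\beta_1,-m_1)\\ v_2=e^{D_2}(1,0,-\beta_2,-m_2)\\ v_1+v_2=\v}} (-1)^{\chi(v_2,v_1)-1}\,\chi(v_2,v_1)\,\p_{-m_1,\beta_1}\,\qe_{m_2,\beta_2},
\end{equation*}
at least modulo the error terms \eqref{terms}.

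The second step is to identify the role of the exponents and the derivative $\partial/\partial z|_{z=-1}$. I would expand the two factors $\qe^{k,\epsilon}$ and $\p^{k,\epsilon}$ in $\mathcal{Z}^{k,\epsilon}_{D6-\overline{D6}}$, substitute the shifted arguments $(xz^{-1},\,x^{D_2}yz^{-kH})$ and $(xz^{-1},\,x^{-D_1}y^{-1}z^{-kH})$, and track how the $z$-powers assemble. The prefactor $z^{k^3H^3/6+kH.c_2(X)/12}$ together with the $z^{-m}$ contributions coming from $\qe_{m_2,\beta_2}$ and $\p_{-m_1,\beta_1}$ produce a total $z$-exponent that, when differentiated once and evaluated at $z=-1$, yields precisely the sign $(-1)^{\chi(v_2,v_1)-1}$ and the integer multiplicity $\chi(v_2,v_1)$. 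This is the crux: I would use the Riemann--Roch expression for $\chi([E_1],[E_2])$ recorded in Section \ref{wcross}, specialised to $v_1=-e^{D_1}(1,0,-\beta_1,-m_1)$ and $v_2=e^{D_2}(1,0,-\beta_2,-m_2)$ with $D_2-D_1=kH$, to verify that the exponent of $z$ is an affine function of the data whose linear part is exactly $-\chi(v_2,v_1)$ (hence the derivative brings down $\chi(v_2,v_1)$, and evaluating the remaining $(-1)^{\text{exponent}}$ at $z=-1$ supplies the sign). The $x$- and $y$-matching is a bookkeeping check: the factors $x^{(D_1^3-D_2^3)/6}y^{(D_1^2-D_2^2)/2}$ combine with the monomials from $\qe$ and $\p$ so that the total $x$-power is $-m$ and the total $y$-power is $-\beta$ for the class $\v=v_1+v_2$.

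The third step is to reconcile the summation ranges. Theorem \ref{thm-rank zero-wall-crossing} restricts $(D_i,\beta_i,(-1)^{i+1}m_i)\in M(\v)$, whereas $\mathcal{Z}^{k,\epsilon}_{D6-\overline{D6}}$ imposes $D_i\in N(k)$ together with $(\beta_i,m_i)\in C(k,\epsilon)$ from \eqref{C}. I would show that for $k$ large relative to $\xi$ and $\mu$, the constraint set $M(\v)$ defined in Definition \ref{Def.M-v} is, up to the controlled truncation, contained in the box cut out by $N(k)$ and $C(k,\delta/k^\xi)$; the bounds \eqref{beta-i} and \eqref{m-i} of Proposition \ref{prop.calsses of factors}, which scale like $k$, $k^2$, $k^3$ in $\beta_i.H$, $(\beta_i.H)^2$, $m_i$, are exactly what make the choices $\epsilon=\delta/k^\xi$ and the threshold \eqref{terms} compatible. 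Concretely, the terms we are forced to discard are those where one of the stable-pair/ideal-sheaf factors has numerical invariants too large to satisfy $C(k,\epsilon)$; I would estimate via \eqref{the second bound} and \eqref{m-i} that such discarded contributions only affect monomials $x^m y^\beta$ lying in the excluded range \eqref{terms}, thereby justifying the phrase ``modulo terms'' in the statement. The main obstacle I anticipate is precisely this last matching: verifying that the naturally asymmetric bounds coming from the wall-crossing (one factor a stable pair, the other an ideal sheaf) line up with the symmetric truncation $C(k,\epsilon)$ after the Euler-characteristic change of variables, and that the leftover boundary terms are confined to the region \eqref{terms}. Establishing the quantitative relation between $(\mu,\delta,k(\xi,\mu))$ and the geometric bounds of Proposition \ref{prop.calsses of factors} — rather than the formal series manipulation — is where the real work lies; the sign and multiplicity computation, while delicate, is a finite Riemann--Roch calculation.
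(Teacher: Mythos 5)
Your overall route is the same as the paper's: reduce \eqref{derivative} to the single-coefficient identity (the paper's equation \eqref{j}), defer the $z$-derivative sign/multiplicity bookkeeping to a Toda-style computation, and then compare the truncated index set $N(k)\times C(k,\epsilon)$ with the wall-crossing sum of Theorem \ref{thm-rank zero-wall-crossing}. Your step 3 — using the bounds \eqref{beta-i} and \eqref{m-i} of Proposition \ref{prop.calsses of factors} to show that every wall-crossing term fits inside the truncation, with the excluded monomials \eqref{terms} absorbing the failures — is exactly one of the paper's two claims. But it is only one of them, and this is a genuine gap: showing that \eqref{j} misses no wall-crossing term does not rule out that \eqref{j} contains \emph{spurious} non-zero terms, i.e.\ pairs consisting of a stable pair and an ideal sheaf whose numerical invariants satisfy the box constraints but which never become $\nu_{b,w}$-semistable of equal slope anywhere in $U$, and hence never contribute to $\J(\v)$. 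If such terms existed with $\chi(v_2,v_1)\,\p_{-m_1,\beta_1}\,\qe_{m_2,\beta_2}\neq 0$, the right-hand side of \eqref{derivative} would overcount and the theorem would fail.

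This converse direction is not automatic, because the box $N(k)\times C(k,\epsilon)$ is not even contained in the index set $M(\v)$ of Theorem \ref{thm-rank zero-wall-crossing}: for $\xi=1$ the truncation allows $(-1)^{i+1}m_i$ up to $\epsilon k^3=\delta k^{2}$, which for $\delta$ near $\tfrac14$ exceeds the bound $\frac{DH^2(DH^2+H^3)}{6(H^3)^2}=\frac{k(k+1)}{6}$ of Definition \ref{Def.M-v} once $k\geq 3$. The paper closes this direction by a separate argument (its claim (a)): from $D_i\in N(k)$, $(\beta_i,(-1)^{i+1}m_i)\in C(k,\epsilon)$, $\xi\geq 1$ and the condition \eqref{cond}(ii) on $\delta$, it derives hypothesis (c) of Proposition \ref{prop.allowed classes}, which then guarantees that any such pair of objects genuinely produces a wall for $\v$ and so appears in the wall-crossing formula. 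Proposition \ref{prop.allowed classes} never enters your proposal, and without it (or an equivalent existence-of-walls statement) the identification of the two sums is incomplete. A second, smaller omission: before you may invoke Theorem \ref{thm-rank zero-wall-crossing}(ii) at all for a non-excluded class $\v$, you must check that the constraint \eqref{imp} forced by \eqref{terms} implies the hypothesis \eqref{bound for Q}; this is precisely what the paper's condition \eqref{cond}(iii) on $\mu$ achieves, and it is where the constant $k(\xi,\mu)$ is actually pinned down.
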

\begin{Rem}\label{rem-toda}
	There are only three places in the above set-up that are different from the Toda ones \cite{toda:bogomolov-counting}:
	\begin{enumerate}
		\item[(i)] In the definition of generating series $\mathcal{Z}^{k, \epsilon}_{D6-\overline{D6}}(x, y , z)$, we added the extra condition that $D_1, D_2 \in N(k)$.
		\item[(ii)] In the definition of $C(k, \epsilon)$, Toda restricts the second factor $m$ to satisfy $\abs{m} < \epsilon k^3$. 
			\item [(iii)] We added the extra factor $\left(\#H^2(X,\Z)_{\mathrm{tors}}\right)^2$ in \eqref{derivative}.
	\end{enumerate}     
\end{Rem} 
\begin{proof}[Proof of Theorem \ref{thm-toda-osv}]
	By applying a similar argument as in \cite[Section 3.8]{toda:bogomolov-counting}, one can obtain the coefficient of $x^{-m}y^{-\beta}$ in the right hand side of \eqref{derivative} and show that the equality \eqref{derivative} is equivalent to 
	\begin{align}\label{j}
	&\J(0, kH, \beta, m) = \nonumber\\
	 &\left(\#H^2(X,\Z)_{\mathrm{tors}}\right)^2\sum_{\substack{v_1\, =\, -e^{D_1}(1, 0, -\beta_1, -m_1)\\ v_2 \,=\, e^{D_2}(1, 0, - \beta_2, -m_2)\\
			v_1+v_2\, =\, (0,\, kH,\, \beta,\, m) \\
			(\beta_i,\ (-1)^{i+1}m_i) \,\in\, C\left(k, \epsilon \right)\\
			D_1, D_2\, \in\, N(k)
	}}  (-1)^{\chi(v_2, v_1) -1}\;\chi(v_2, v_1)\qe_{m_2, \beta_2}\,\p_{-m_1, \beta_1}\, .
	\end{align}
	By our assumption \eqref{terms}, we only need to consider the terms $x^{-m}y^{-\beta}$ with 
	\begin{equation*}
	\frac{H^3}{24}k^3\left(\frac{\mu}{k^{\xi}}\right) > -m + \frac{(\beta.H)^2}{2kH^3} + \frac{k^3H^3}{24} = \frac{kH^3}{12}Q(\v)
	\end{equation*}
	for $\v \coloneqq (0, kH, \beta, m)$, i.e.
	\begin{equation}\label{imp}
	Q(\v) < \frac{k^2}{2}\, \frac{\mu}{k^{\xi}}\,.
	\end{equation}
	For any $\xi \geq 1$, there are $\mu >0$, $\delta >0$ and $k(\xi, \mu) >0$ such that for $k > k(\xi, \mu)$ the following three conditions are satisfied: 
	\begin{equation}\label{cond}
	\text{(i) }\mu < \frac{2}{H^3}\delta \ , \text{\qquad (ii) }\delta < \frac{1}{4} -\frac{1}{kH^3} \ , \qquad\text{(iii) }\frac{\mu}{k^{\xi}} \ \leq\ 1 - \left(1 - \frac{1}{kH^3} + \frac{2}{k^2(H^3)^2} \right)^2. 
	\end{equation}	
%
%
	We also set $\epsilon \coloneqq \frac{\delta}{k^{\xi}}$. Combining \eqref{imp} and \eqref{cond}(iii) imply that the condition \eqref{bound for Q} holds, so we may apply Theorem \ref{thm-rank zero-wall-crossing} and the results in Section \ref{section.wall} for class $\v$. To prove \eqref{j}, we need to show the following two claims: 
	\begin{enumerate}
		\item Take $E_1, E_2 \in \cD(X)$ of classes $v_1, v_2$ as in \eqref{j} such that $E_1 \otimes D_1^{-1}$ is a torsion-free sheaf and $(E_2 \otimes D_2^{-1})^{\vee}$ is a stable pair, then there is a point $(b,w) \in U$ where $E_1$ and $E_2$ are both $\nu_{b,w}$-stable of the same slope, and so they make a wall for the class $\v = (0, kH, \beta, m)$. 
		\item All classes $v_1, v_2 \in K(X)$ which give a non-zero term in the wall-crossing formula for $\J(v)$ in Theorem \ref{thm-rank zero-wall-crossing} are included in \eqref{j}.    
	\end{enumerate}
    In \eqref{j}, we know $D_i \in N(k)$ for $i=1, 2$, so 
	\begin{equation*}
	\frac{1}{2}\left(\frac{D_i.H^2}{H^3}\right)^2 - \frac{D_i^2.H}{2H^3}\, <\, \frac{k}{4H^3}. 
	\end{equation*}
	Since $(\beta_i,\ (-1)^{i+1}m_i) \in C(k, \epsilon)$, we get 
	\begin{equation*}
	\frac{1}{2}\left(\frac{D_i.H^2}{H^3}\right)^2 - \frac{D_i^2.H}{2H^3} \ +\ \frac{\beta_iH}{H^3}\ <\, \frac{k}{4H^3} + \frac{1}{H^3}\epsilon k^2 < \frac{k}{2H^3} -\frac{1}{(H^3)^2}\,.
	\end{equation*}
	Here the last inequality follows from the assumptions $\xi \geq 1$ and \eqref{cond}(ii), i.e. 
	\begin{equation*}
	\frac{1}{H^3}\epsilon k^2 = \frac{k^2}{H^3}\frac{\delta}{k^{\xi}} < \frac{k}{H^3}\delta < \frac{k}{4H^3} - \frac{1}{(H^3)^2}
	\end{equation*}
	Thus Proposition \ref{prop.allowed classes} implies claim (a). 
	
	To prove claim (b), take two classes $(D_i, \beta_i, (-1)^{i+1}m_i) \in M(\v)$ for $i=1, 2$ satisfying inequalities \eqref{beta-i} and \eqref{m-i} in Proposition \ref{prop.calsses of factors}, then we only need to show $D_i \in N(k)$ and $(\beta_i, (-1)^{i+1}m_i) \in C(k , \epsilon)$. 
	We know 
	\begin{equation*}
\frac{k^2}{2}\left(1 -\sqrt{1-2\frac{Q(\v)}{k^2}}\right) = \frac{k^2}{2}\frac{2\frac{Q(\v)}{k^2}}{1+ \sqrt{1-2\frac{Q(\v)}{k^2}} } \leq Q(\v) \overset{\eqref{imp}}{<}  \frac{k^2}{2}\, \frac{\mu}{k^{\xi}} \overset{\eqref{cond}(i)}{<} \frac{k^2}{H^3}\frac{\delta}{k^{\xi}} = \frac{k^2\epsilon}{H^3}.
\end{equation*} 
	Combining this with \eqref{beta-i} implies 
	\begin{equation*}
	\frac{1}{2}\left(\frac{D_iH^2}{H^3}\right)^2 -  \frac{D_i^2H}{2H^3} + \frac{\beta_i.H}{H^3} < \frac{k^2}{H^3}\epsilon.
	\end{equation*}
	By Hodge index Theorem and \eqref{BOG}, both terms $\frac{1}{2}\left(\frac{D_iH^2}{H^3}\right)^2 -  \frac{D_i^2H}{2H^3}$ and $\beta_i.H$ are non-negative, so the above gives $\beta_i.H \leq \epsilon k^2$ and 
	\begin{equation*}
	\left(\frac{D_iH^2}{H^3}\right)^2 -  \frac{D_i^2H}{H^3} < 2\frac{k^2}{H^3}\epsilon = \frac{2k^2}{H^3}\frac{\delta}{k^{\xi}} \overset{\eqref{cond}(ii)}{<} \frac{2k}{H^3}\left(\frac{1}{4} -\frac{1}{kH^3} \right) < \frac{k}{2H^3} 
	\end{equation*}
	which proves $D_i \in N(k)$.
	Moreover \eqref{m-i} gives
	\begin{equation}
	(-1)^{i+1}m_i \leq\  \frac{2}{3}\beta_i.H \left( \beta_i.H + \frac{1}{2H^3}\right) \leq \frac{2}{3}\epsilon k^2 \left(\epsilon k^2 + \frac{1}{2H^3} \right). 
	\end{equation}
	To finish the proof of claim (b), it suffices to show
	\begin{equation*}
	\frac{2}{3}\epsilon k^2 \left(\epsilon k^2 + \frac{1}{2H^3} \right) \leq  \epsilon k^3.
	\end{equation*}
	This is equivalent to $\epsilon = \frac{\delta}{k^{\xi}} \leq \frac{3}{2k} - \frac{1}{2k^2H^3}$, so it is enough to show   
	\begin{equation*}
	\frac{\delta}{k^{\xi}} \leq \frac{3}{2k} - \frac{1}{2k} = \frac{1}{k}.  
	\end{equation*}
	which clearly holds by \eqref{cond}(ii).

\end{proof}


\section{Joyce--Song pair}\label{js section}
As before, we fix a rank 0 class $\v = (0, D, \beta, m )\in K(X)$ with
\beq{vdef}
\ch\_H(\v)\=(0,\,k,\,s,\,d),
\eeq
where $k>0$. Then we pick $n\gg0$ and set $\v_n:=\v-[\cO_X(-n)]$, so
\begin{equation}\label{class vn}
\ch\_H(\v_n)\=\Big(\!-1, \ k+n,\ s-\tfrac12n^2 , \ d +\tfrac16n^3\Big).
\end{equation}
In this section, we do wall-crossing for class $\v_n$ instead of class $\v$, and capture the information of slope-semistable sheaves of class $\v$ along the Joyce-Song wall where $\v$ and $\cO_X(-n)[1]$ have the same slope.
\bigskip

\noindent We know any wall for class $\v_n$ is a line segment $\ell \cap U$ where $\ell$ is a line passes through
\begin{equation*}
\Pi(\v_n) = \left(-k-n\ , \ -s+ \frac{1}{2}n^2  \right). 
\end{equation*}

\begin{figure}[h]
	\begin{centering}
		\definecolor{zzttqq}{rgb}{0.27,0.27,0.27}
		\definecolor{qqqqff}{rgb}{0.33,0.33,0.33}
		\definecolor{uququq}{rgb}{0.25,0.25,0.25}
		\definecolor{xdxdff}{rgb}{0.66,0.66,0.66}
		
		\begin{tikzpicture}[line cap=round,line join=round,>=triangle 45,x=1cm,y=0.9cm]
		
		\draw[->,color=black] (-4,0) -- (4,0);
		\draw  (4, 0) node [right ] {$b,\,\frac{\ch_1\!.\;H^2}{\ch_0H^3}$};
		
		\fill [fill=gray!25!white] (0,0) parabola (3,4) parabola [bend at end] (-3,4) parabola [bend at end] (0,0);
		
		
		\draw  (0,0) parabola (3.1,4.27); 
		\draw  (0,0) parabola (-3.1,4.27); 
		\draw  (3.8 , 3.6) node [above] {$w= \frac{b^2}{2}$};
		
		\draw[->,color=black] (0,-.8) -- (0,4.7);
		\draw  (1, 4.1) node [above ] {$w,\,\frac{\ch_2\!.\;H}{\ch_0H^3}$};
		
		\draw [color=black] (-2.8,1.95) -- (2.5, .5);
		\draw [color=black] (-2.8,1.95) -- (3.5, 3.5);
		\draw [dashed] (-1.96, 1.75) -- (-1.96,-0.15);		
		\draw [dashed] (1.35,.8) -- (1.35,-0.1);
		
		\draw [dashed] (-2.8,1.95) -- (-2.8,4.2);
		
		\draw  (-1.95, 0) node[below]{$b_2^f$};
		\draw  (1.35, 0) node[below]{$b_1^f$};
		\draw  (-2.8, 1.6) node {{\small$\Pi(\v_n)$}};
		\draw  (-2.3,2.05) node[above] {{\small$\Pi(\w)$}};
		\draw  (2, .35) node {$\ell_f$};
		\draw  (3.3, 2.9) node [above] {$\ell$};
		\draw  (2, 3.2) node [above] {\Large{$U$}};
		\fill (-2.4,2.05) circle (2pt);
		\fill (-2.8,1.95) circle (2pt);
		
		\end{tikzpicture}
		
		\caption{The line $\ell_f$}
		\label{figUwm}
	\end{centering}
\end{figure}
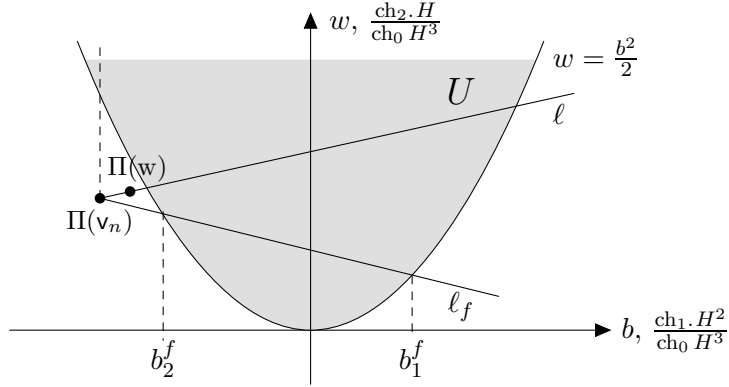

\noindent If $b > -k-n$ and $w \gg 0$, any $\nu_{b,w}$-semistable object $E \in \cA(b)$ of class $\v_n$ is isomorphic to the derived dual of a stable pair up to twisting by a line bundle, see Lemma \ref{lem.large volume limit}. On the other hand, conjectural Bogomolov inequality \eqref{quadratic form} implies that there is a line $\ell_f$ described in \cite[Equation (23)]{feyz:rank-r-dt-theory-from-1} such that there is no $\nu_{b,w}$-semistable object of class $v_{n}$ for $(b,w)$ below $\ell_f$, see Figure \ref{figUwm}. Hence there are finitely many walls for class $\v_n$ between the large volume limit and $\ell_f$, see \cite[Proposition 1.4]{feyz:rank-r-dt-theory-from-1} for more details. 


Let $M_{\v, n}$ be the set of all rank $-1$ classes $\al = (-1, \ch_1, \ch_2, \ch_3) \in K(X)$ with non-negative discriminant $\Delta_H(\al) \geq 0$ such that 
\begin{equation}\label{the set M}
n +k < \frac{\ch_1H^2}{H^3} \leq n - \frac{k}{3} \qquad \text{and} \qquad \text{$\Pi(\al)$ lies above or on $\ell_f$}. 
\end{equation}
For any class $\al \in M_{\v , n}$ we define a line $L_{\al} \subset \R^2$ by using induction on $\Delta_H(\al)$: 
\begin{itemize*}
	\item If $\Delta_H(\al) = 0$, then $L_{\al}$ is the vertical line passing through $\Pi(\al)$ (which is of gradient $-\infty$).
	\item If $\Delta_H(\al) >0$, then $L_{\al}$ is the line of smallest gradient passing through $\Pi(\al)$ which satisfies the following condition:
	
	If an object $E$ of class $\al$ gets destabilised along a wall $\ell$ on the right of $\Pi(\al)$\footnote{In other words, $\ell \cap U$ lies to the right of the vertical line passing through $\Pi(E)$.} and above $L_{\al}$ with the destabilising sequence $E_1 \rightarrow E \rightarrow E_2$, then 
	\begin{enumerate*}
		\item one of the factors $E_i$ is of rank $-1$ with $\ch(E_i) \in M_{\v, n}$ and $\Delta_H(E_i) < \Delta_H(\al)$. Moreover $\ell$ lies above $L_{[E_i]}$ (i.e. the gradient of $\ell$ is bigger than $L_{[E_i]}$), 
		\item the other factor $E_j$ is of rank zero with $\frac{\ch_1(E_j)H^2}{H^3} < k $ and there is no wall for class $[E_j]$ above or on $\ell$. 
	\end{enumerate*} 
\end{itemize*}

%
We know for any class $\al \in M_{\v , n}$, there is $w_0 >0$ such that there is no wall for $\al$ crossing the vertical line $b =0$ at a point $(0, w)$ with $w > w_0$. To find $L_{\al}$, we may start with the line $L$ passing through $\Pi(\al)$ and $(0, w_0)$. Since there is no wall for class $\al$ above $L$, the above condition is satisfied. Then we rotate the line $L$ clockwise about $\Pi(\al)$ as far as both (a) and (b) hold true, and the limiting line will be $L_{\al}$.  


\begin{Thm}{\cite[Section 2]{feyz:rank-r-dt-theory-from-1}}
Suppose an object $E$ of class $\v_n$ gets destabilised along a wall $\ell$ with a destabilising sequence $E_1 \rightarrow E \rightarrow E_2$, then  
\begin{itemize*}
	\item one of the factors $E_i$ is of rank $-1$ with $\ch(E_i) \in M_{\v , n}$ and the wall $\ell$ lies above $L_{[E_i]}$, 
	\item the other factor $E_j$ is of rank zero with $\frac{\ch_1(E_j)H^2}{H^3} \leq k$ such that there is no wall for class $[E_j]$ above or on $\ell_f$. 
\end{itemize*} 
Moreover $\ch_1(E_j)H = kH^3$ if and only if the wall $\ell$ is the \emph{Joyce--Song} wall, i.e. $E_1$ is a slope-semistable sheaf of class $\v$ and $E_2 = T(-nH)[1]$ for a line bundle $T$ with torsion first Chern class.	
\end{Thm}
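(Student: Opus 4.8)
The plan is to reconstruct the argument of \cite[Section 2]{feyz:rank-r-dt-theory-from-1} by an induction on the discriminant $\Delta_H$, mirroring the recursive definition of the lines $L_\al$. By \cite[Proposition 1.4]{feyz:rank-r-dt-theory-from-1} there are only finitely many walls for $\v_n$ between the large volume limit and the final line $\ell_f$, so it suffices to fix one such wall $\ell$, analyse its destabilising sequence $E_1 \to E \to E_2$, and then feed the two factors back into the recursion.

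First I would pin down the ranks of the factors. Since $\ch_0(\v_n)=-1$ we have $\rk(E_1)+\rk(E_2)=-1$, and I claim the split must be $(-1,0)$ in some order. This is the exact analogue of the rank computation in Proposition \ref{prop.destabilising}: one uses the conjectural Bogomolov--Gieseker inequality \eqref{quadratic form} to locate $\ell_f$ and to bound the $b$-width $b_1^f-b_2^f$ of $\ell_f\cap\partial U$. Passing to the cohomology sheaves $\cH^{-1}(E_i),\cH^0(E_i)$, which satisfy the $\mu_H$-bounds at the two endpoints of $\ell_f\cap\partial U$ because $\ell$ lies on or above $\ell_f$ (the analogue of \eqref{b}), the counting argument of \eqref{b-difference} bounds the total rank of the $\cH^0$'s and hence forces each $\rk(E_i)$ into $\{-1,0\}$. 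Thus one factor, say $E_i$, has rank $-1$ and the other, $E_j$, has rank $0$.

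Next I would verify that the rank $-1$ factor lies in $M_{\v,n}$ and that $\ell$ lies above $L_{[E_i]}$. Membership in $M_{\v,n}$ reduces to three points: $\Delta_H(E_i)\ge 0$ by the classical inequality \eqref{BOG} applied to the $\nu_{b,w}$-semistable object $E_i$; $\Pi(E_i)$ lies on or above $\ell_f$ because $E_i$ is $\nu_{b,w}$-semistable along $\ell$, which itself sits above $\ell_f$; and the bound on $\tfrac{\ch_1(E_i)H^2}{H^3}$ required by \eqref{the set M} follows from rank additivity $\ch_1(E_i)=\ch_1(\v_n)-\ch_1(E_j)$ together with the bound $\tfrac{\ch_1(E_j)H^2}{H^3}\le k$ on the rank $0$ factor proved below. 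Since the factors are proper we have $\Delta_H(E_i)<\Delta_H(\v_n)$, so the inductively defined line $L_{[E_i]}$ is available; as $E_i$ occurs as a $\nu_{b,w}$-semistable factor along all of $\ell\cap U$ and $\Pi(E_i)$ lies on $\ell$ to the right of $\Pi(\v_n)$, the wall $\ell$ satisfies the defining condition of $L_{[E_i]}$ and therefore lies above it.

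For the rank $0$ factor $E_j$ the bound $\tfrac{\ch_1(E_j)H^2}{H^3}\le k$ comes from rank additivity together with the fact that $E_j$ is an honest torsion sheaf whose first Chern class in the $H$-direction cannot exceed that of $\v$; the absence of walls for $[E_j]$ on or above $\ell_f$ then follows from the slope-(in)stability argument of \cite[Lemma B.3]{feyz:rank-r-dt-theory-from-0} (compare Lemma \ref{lem-first part of theorem 1}), using that $[E_j]$ is a rank $0$ class of bounded $\tfrac{\ch_1H^2}{H^3}$ whose walls are the parallel lines of Proposition \ref{prop. locally finite set of walls}(b). For the Joyce--Song statement, the equality $\ch_1(E_j)H=kH^3$ is the extreme case of this bound; it forces $E_j$ to carry the full $\ch_1(\v)$, so that $\tfrac{\ch_1(E_i)H^2}{H^3}=n$, placing $\Pi(E_i)$ at the right-hand end of the $M_{\v,n}$-range, where the large volume description of Lemma \ref{lem.large volume limit}(b) identifies $E_i$ with $T(-nH)[1]$ for a torsion-twist line bundle $T$ and $E_j$ with a slope-semistable sheaf of class $\v$; this is precisely the wall where $\v$ and $\cO_X(-n)[1]$ share a $\nu_{b,w}$-slope, and the converse is immediate from $\v_n=\v-[\cO_X(-n)]$. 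I expect the main obstacle to be the rank-splitting step: ruling out the splittings with $|\rk(E_i)|\ge 2$ requires delicate control of the width $b_1^f-b_2^f$ via \eqref{quadratic form}, and one must simultaneously check that the induction on $\Delta_H$ is well-founded, i.e.\ that the clockwise rotation procedure defining each $L_\al$ terminates and involves only finitely many relevant factors.
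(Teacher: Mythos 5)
The central claim of the theorem---that the wall $\ell$ lies above $L_{[E_i]}$---is exactly where your argument breaks down. You write that since $E_i$ occurs as a $\nu_{b,w}$-semistable factor along $\ell$ and $\Pi(E_i)$ lies on $\ell$, ``the wall $\ell$ satisfies the defining condition of $L_{[E_i]}$ and therefore lies above it.'' This is circular: the defining condition of $L_{[E_i]}$ constrains destabilising sequences along walls \emph{for the class $[E_i]$} lying above $L_{[E_i]}$, whereas $\ell$ is a wall for $\v_n$ along which $E_i$ is semistable (indeed typically stable, not destabilised). Knowing that $E_i$ is semistable somewhere on $\ell$ gives no comparison between the gradient of $\ell$ and the gradient of $L_{[E_i]}$, which is produced by the clockwise-rotation procedure and could a priori be \emph{larger} than that of $\ell$, i.e.\ $\ell$ could lie below $L_{[E_i]}$. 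The paper closes this gap with genuinely extra input from \cite[Proposition 2.6]{feyz:rank-r-dt-theory-from-1}: any rank $-1$ factor along a wall for $\v_n$ is either (a) ``close to $\v_n$'', in which case the line through $\Pi(E_i)$ parallel to $\ell_f$ lies below $\ell$ and, by induction on $\Delta_H(E_i)$, above $L_{[E_i]}$, or (b) the wall $\ell$ lies in the ``safe area'' of $[E_i]$, which by \cite[Lemma 2.8]{feyz:rank-r-dt-theory-from-1} and the same induction lies above $L_{[E_i]}$. Your proposal contains no analogue of this dichotomy, and without it (or some substitute) the key assertion is unproven; the well-foundedness worry you flag at the end is the easy part (the recursion only invokes classes of strictly smaller discriminant), while the dichotomy is the hard part you are missing.

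There are two secondary gaps. First, your bound $\ch_1(E_j).H^2 \leq kH^3$ for the rank-zero factor is justified by ``rank additivity together with the fact that $E_j$ is an honest torsion sheaf whose first Chern class in the $H$-direction cannot exceed that of $\v$''; nothing forces this a priori (membership in the heart only gives $0 \leq \ch_1(E_j).H^2 \leq \ch_1(\v_n).H^2 = (k+n)H^3$), and your verification that $\ch(E_i) \in M_{\v,n}$ in turn cites this very bound, so the two claims prop each other up. The correct route is the quantitative width estimate for $\ell_f$ run for the class $\v_n$ (the analogue of \eqref{lower}--\eqref{b-difference}, which is what \cite[Lemmas 2.1, 2.2]{feyz:rank-r-dt-theory-from-1} do), not a general fact about torsion sheaves; note also that Proposition \ref{prop.destabilising} concerns the rank-zero class $\v$, whose walls are parallel lines, while walls for $\v_n$ pass through $\Pi(\v_n)$, so the transfer is not verbatim. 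Second, for the Joyce--Song wall you assert that $\ch_1(E_j).H = kH^3$ ``identifies $E_i$ with $T(-nH)[1]$'' via Lemma \ref{lem.large volume limit}(b); but that identification requires first showing $\Delta_H(E_i) = 0$ (so that a rank $-1$ semistable object of that class is a shifted twisted line bundle) and that $E_j$ is a slope-semistable sheaf of class exactly $\v$, which is the content of \cite[Proposition 2.5]{feyz:rank-r-dt-theory-from-1} and does not follow from the position of $\Pi(E_i)$ alone.
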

\begin{proof}
	By \cite[Lemma 2.1 \& Lemma 2.2]{feyz:rank-r-dt-theory-from-1}\footnote{Here we replaced the parameters $n_0, c$ and $s_0$ in \cite[Section 2]{feyz:rank-r-dt-theory-from-1} by $n, k$ and $s$, respectively.}, one of the factors $E_j$ is of rank zero and there is no wall for $[E_j]$ above or on $\ell_f$. The other factor $E_i$ is of rank $-1$ and $\ch(E_i) \in M_{\v , n}$. \cite[Proposition 2.6]{feyz:rank-r-dt-theory-from-1} implies that $E_i$ is either
	\begin{enumerate}
		\item[(a)] close to $v_{n}$ (see \cite[Definition 2.3]{feyz:rank-r-dt-theory-from-1}), or 
		\item[(b)] the wall $\ell$ lies in the safe area of $[E_i]$ (see \cite[Definition 2.7]{feyz:rank-r-dt-theory-from-1}).   
	\end{enumerate}
  In case (b), by induction on $\Delta_H(E_i)$, one sees $L_{[E_i]}$ lies below the safe area of $[E_i]$, see \cite[Definition 2.7 and Lemma 2.8]{feyz:rank-r-dt-theory-from-1}. Thus the wall $\ell$ lies above $L_{[E_i]}$ as claimed.    
  
  In case (a), for any class $\al$ close to $\v_n$, the line $\ell(\al)$ is defined to be the line parallel to $\ell_f$ through the point $\Pi(\al)$. 
  We can again proceed by induction on $\Delta_H(E_i)$ and apply \cite[Proposition 2.6]{feyz:rank-r-dt-theory-from-1} to prove the line $\ell([E_i])$ is above $L_{[E_i]}$. Since the wall $\ell$ lies above $\ell([E_i])$, we get $\ell$ lies above $L_{[E_i]}$ as well.



   When $\ch_1(E_j)H = kH^3$, the final claim follows from \cite[Proposition 2.5]{feyz:rank-r-dt-theory-from-1}.    
\end{proof}

\subsection*{Wall-crossing formula} 
From now on, we assume $X$ is a Calabi-Yau 3-fold with $\Pic(X) = \Z.H$. In this case, we can simplify the wall-crossing formula \eqref{wcf} for any class in $M_{\v , n}$ and so for $v_{n}$. 


\begin{Lem}\label{lem-rank0-tilt-j}
	For any rank zero class $\alpha \in K(X)$ with $\ch_1(\alpha) \neq 0$, we have 
	\begin{equation*}
	\J(\al) = \J_{b,w}(\alpha) \qquad \text{for any $b \in \mathbb{R}$ and $w \gg 0$}. 
	\end{equation*}
\end{Lem}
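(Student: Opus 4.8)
The plan is to establish the two equalities $\J_{b,w}(\alpha) = \J_{\mathrm{ti}}(\alpha)$ (for $w \gg 0$) and $\J_{\mathrm{ti}}(\alpha) = \J(\alpha)$ separately: the first by passing to the large volume limit, the second by the tilt-to-Gieseker wall-crossing formula \eqref{wcf-tilt-gieseker}.

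For the first equality, note that since $\alpha$ has rank zero every sheaf $E$ of class $\alpha$ satisfies $\ch_0(E) = 0 \geq 0$ and $\mu_H(E) = +\infty$, so the hypotheses $\ch_0(E) \geq 0$ and $b < \mu_H(E)$ of Lemma \ref{lem.large volume limit}(a) hold for every $b \in \mathbb{R}$; such $E$ also lies in $\cA_b$ because $\mu_H^-(E) = +\infty > b$. Lemma \ref{lem.large volume limit}(a) then identifies the objects of class $\alpha$ that are $\nu_{b,w}$-semistable for $w \gg 0$ with the tilt-semistable sheaves of class $\alpha$ (in particular every such object is a sheaf), and comparing the two Joyce--Song counts gives $\J_{b,w}(\alpha) = \J_{\mathrm{ti}}(\alpha)$ for $w \gg 0$, independently of $b$.

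For the second equality I would substitute $v = \alpha$ into \eqref{wcf-tilt-gieseker}, which writes $\J_{\mathrm{ti}}(\alpha)$ as a sum over decompositions $\alpha = \alpha_1 + \cdots + \alpha_q$ and connected, simply-connected digraphs $\Gamma$, weighted among other things by $\prod_{i \to j \text{ in } \Gamma} \chi(\alpha_i, \alpha_j)$. Such a term is nonzero only when there is a tilt-semistable object of class $\alpha$ whose tilt-semistable factors are the $\alpha_i$; since the factors of a tilt-semistable object all share its tilt slope, every contributing $\alpha_i$ has $\nu_H(\alpha_i) = \nu_H(\alpha)$. The crucial point is that, under $\Pic(X) = \mathbb{Z}.H$, any two rank-zero classes $\alpha_i, \alpha_j$ with $\nu_H(\alpha_i) = \nu_H(\alpha_j)$ have $\chi(\alpha_i, \alpha_j) = 0$: writing $\ch_1(\alpha_i) = k_i H$, the Calabi--Yau Euler pairing collapses to $k_j\,\ch_2(\alpha_i).H - k_i\,\ch_2(\alpha_j).H$, which is zero exactly when $\ch_2(\alpha_i).H / k_i = \ch_2(\alpha_j).H / k_j$.

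Granting this vanishing, the conclusion is immediate: for $q \geq 2$ the digraph $\Gamma$ is a tree with at least one edge, so $\prod_{i \to j \text{ in } \Gamma} \chi(\alpha_i, \alpha_j)$ acquires a zero factor and the whole term drops out, leaving only the $q=1$ term, which equals $\J(\alpha)$. Hence $\J_{\mathrm{ti}}(\alpha) = \J(\alpha)$ and the lemma follows. I expect the main obstacle to be the careful justification that every decomposition contributing to \eqref{wcf-tilt-gieseker} really does consist of factors of the common slope $\nu_H(\alpha)$, together with the observation that it is genuinely the hypothesis $\Pic(X) = \mathbb{Z}.H$---and not merely equality of $H$-slopes---that forces $\chi(\alpha_i, \alpha_j) = 0$; for higher Picard rank this vanishing can fail.
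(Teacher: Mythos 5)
Your proposal is correct and takes essentially the same route as the paper: first $\J_{b,w\gg 0}(\al) = \J_{\mathrm{ti}}(\al)$ via Lemma \ref{lem.large volume limit}, then in the wall-crossing formula \eqref{wcf-tilt-gieseker} every contributing decomposition has factors with $\nu_H(\al_i) = \nu_H(\al)$, whence $\chi(\al_i,\al_j) = \ch_1(\al_j)\ch_2(\al_i) - \ch_1(\al_i)\ch_2(\al_j) = 0$ because $\ch_1(\al_i) = k_iH$ under $\Pic(X) = \Z.H$, killing all $q\geq 2$ terms. Your closing remark that it is the Picard rank one hypothesis (not merely equality of $H$-slopes) which forces this vanishing is exactly the point the paper's proof relies on as well.
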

\begin{proof}
	By Lemma \ref{lem.large volume limit}, we know $\J_{b, w \gg 0}(\al) = \J_{\text{ti}}(\al)$. So we only need to analyse the wall-crossing formula \eqref{wcf-tilt-gieseker} for class $\al$. Consider the semistable factors $\alpha_1, \dots, \al_q$. Then $U(\al_1, \dots, \al_k; \text{Gi}, \text{ti}) = 0$ unless $\widetilde{p}(\al_i)(t) = \widetilde{p}(\al_i)(t)$, i.e. $\nu_H(\al_i) = \nu_H(\al)$. But then 
	\begin{equation*}
	\chi(\al_i, \al_j) = \ch_2(\al_i)\ch_1(\al_j) -\ch_1(\al_i)\ch_2(\al_j) = 0
	\end{equation*}    
	as $\ch_1(\al_i) = k_iH$ for some $k_i \in \mathbb{Z}$. Thus the only non-zero term in \eqref{wcf-tilt-gieseker} is for $q=1$, so $\J_{\text{ti}}(\al) = \J(\al)$ as claimed.  
\end{proof}
 
\begin{Prop}\label{prop-wall-crossing-1}
	Fix $\al \in M_{\v, n}$ and let $\ell$ be a wall for class $\al$ which lies above $L_{\al}$. Suppose  $(b\_0, w_0^+)$ and $(b\_0, w_0^-)$ are points just above and below the wall $\ell$, then
\begin{align}\label{wcf-prop}
	\J_{b\_0, w_0^-}(\al) = \sum_{
\substack{
q \geq 1,\ \al_1, \dots , \al_q \,\in\, \ell\\	
\al_1 \in M_{\v, n}\\
\al_i = (0,\, k_iH,\, \beta_i,\, m_i) \, \text{for $i \in [2, q]$} \\
0 < k_i < k\\
\al_1+ \al_2+ \dots + \al_q = \al 
}
}\frac{1}{(q-1)!}\J_{b\_0, w_0^+}(\al_1) \prod_{i = 2}^q(-1)^{\chi(\al_i, \al)}\chi(\al_i, \al) \J(\al_i)\,. 
\end{align}
Here $\al_i \in \ell$ means $\Pi(\al_i)$ lies on the line $\ell$ if $i =1$, and if $2 \leq i \leq q$, the slope $\frac{\beta_iH}{k_iH^3}$ is equal to the gradient of $\ell$. 
\end{Prop}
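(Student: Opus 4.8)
The plan is to specialise the Joyce--Song wall-crossing formula \eqref{wcf} to $v=\al$ across the wall $\ell$, use the structure theorem to determine which tuples of classes can contribute, and then collapse the resulting sum over digraphs and $U$-coefficients to the stated closed form.

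First I would apply \eqref{wcf} with $(b,w^{\pm})=(b_0,w_0^{\pm})$. Recall (from the discussion after Definition~\ref{Def-U}) that $U(\al_1,\ldots,\al_q;(b_0,w_0^+),(b_0,w_0^-))$ vanishes unless the $\al_i$ are the classes of the $\nu_{b_0,w_0}$-semistable factors of some semistable $E$ of class $\al$. By the structure theorem above together with the inductive conditions (a) and (b) defining $L_\al$ (see \cite[Section 2]{feyz:rank-r-dt-theory-from-1}) and an induction on $\Delta_H(\al)$, these factors consist of exactly one rank $-1$ class $\gamma\in M_{\v,n}$ on $\ell$ and of $q-1$ rank zero classes $\eta_1,\ldots,\eta_{q-1}$ of the form $(0,k_iH,\beta_i,m_i)$ with $0<k_i<k$ and $\nu_H(\eta_i)$ equal to the gradient of $\ell$ (two rank $-1$ factors cannot both occur inside the rank $-1$ class $\al$, so the rank $-1$ part remains a single factor as the decomposition is refined). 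Condition (b) also guarantees there is no wall for $\eta_i$ above or on $\ell$, so $\J_{b_0,w_0^+}(\eta_i)$ is unchanged as $w\to\infty$; combined with Lemma~\ref{lem-rank0-tilt-j} this gives $\J_{b_0,w_0^+}(\eta_i)=\J(\eta_i)$, whence $\prod_i\J_{b_0,w_0^+}(\al_i)=\J_{b_0,w_0^+}(\gamma)\prod_i\J(\eta_i)$ for every contributing tuple. This factor is common to all orderings of the fixed multiset $\{\gamma,\eta_1,\ldots,\eta_{q-1}\}$, so I can pull it out and reduce to a sum of numerical coefficients over orderings.

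Next I would carry out the combinatorial simplification. Writing $c_i:=\chi(\eta_i,\gamma)$, antisymmetry of $\chi$ on a Calabi--Yau threefold gives $\chi(\gamma,\eta_i)=-c_i$, while the proof of Lemma~\ref{lem-rank0-tilt-j} shows $\chi(\eta_i,\eta_j)=0$ since all $\eta$'s have proportional $\ch_1$. Consequently any edge of a digraph joining two rank zero vertices kills the term, so for each ordering the only surviving connected, simply-connected digraph is the star centred at the rank $-1$ vertex $\gamma$, whose edge product equals $(-1)^{A_\sigma}\prod_i c_i$ with $A_\sigma$ the number of $\eta$'s placed after $\gamma$. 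Because $(-1)^{c_i}=(-1)^{-c_i}$, the sign $(-1)^{\,q-1+\sum_{i<j}\chi(\al_i,\al_j)}$ equals $(-1)^{q-1}(-1)^{\sum_i c_i}$ independently of the ordering. Collecting terms, the whole sum becomes
\[
\frac{(-1)^{q-1}}{2^{q-1}}\Big(\sum_{\sigma}(-1)^{A_\sigma}\,U\big(\al_{\sigma(1)},\ldots,\al_{\sigma(q)};(b_0,w_0^+),(b_0,w_0^-)\big)\Big)\,\J_{b_0,w_0^+}(\gamma)\prod_{i}(-1)^{c_i}c_i\,\J(\eta_i),
\]
so, since $c_i=\chi(\al_{i+1},\al)$, matching with the right-hand side of \eqref{wcf-prop} is equivalent to the purely combinatorial identity
\[
\sum_{\sigma}(-1)^{A_\sigma}\,U\big(\al_{\sigma(1)},\ldots,\al_{\sigma(q)};(b_0,w_0^+),(b_0,w_0^-)\big)\=\frac{(-1)^{q-1}2^{q-1}}{(q-1)!}\,.
\]
One checks directly that this holds for $q=1$ (value $1$) and $q=2$ (value $-2$, matching the two-factor computation \eqref{sentence-2-terms}).

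Finally I would establish this identity, which is where the real work lies. Here the slope configuration is maximally degenerate: every $\eta_i$ has one and the same slope at all $(b_0,w)$, while $\nu_{b_0,w}(\gamma)$ is strictly largest just above $\ell$ and strictly smallest just below it. I expect the cleanest route is to repackage \eqref{wcf} in Joyce's Lie algebra on $C(\cA_{b_0})$ with bracket $[e_\alpha,e_\beta]=(-1)^{\chi(\alpha,\beta)+1}\chi(\alpha,\beta)\,e_{\alpha+\beta}$, normalised against \eqref{sentence-2-terms}: crossing $\ell$ is the adjoint action moving $\gamma$ through the mutually commuting block $\{e_{\eta_i}\}$ (commuting because $\chi(\eta_i,\eta_j)=0$), and the term binding $\gamma$ with all $q-1$ of the $\eta_i$ is the fully symmetrised iterated bracket $\frac{1}{(q-1)!}\sum_{\tau}\operatorname{ad}_{e_{\eta_{\tau(q-1)}}}\!\cdots\operatorname{ad}_{e_{\eta_{\tau(1)}}}e_\gamma$. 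Each bracketing step contributes $(-1)^{c_i}c_i$ and the symmetry factor is exactly $1/(q-1)!$, which is the Baker--Campbell--Hausdorff origin of the claimed factorial. Alternatively the displayed $U$-identity can be proved by induction on $q$ straight from Definition~\ref{Def-U}, using that the blocks permitted by the $w_0^+$- and $w_0^-$-slope conditions are governed entirely by the position of $\gamma$ relative to the $\eta$-block; this is the computation already performed in \cite[Section 2]{feyz:rank-r-dt-theory-from-1}, which can be cited or transcribed. The main obstacle is precisely this bookkeeping of the Joyce--Song $S$- and $U$-coefficients across all orderings; the reduction to it from \eqref{wcf} is a direct consequence of the structure theorem and Lemma~\ref{lem-rank0-tilt-j}.
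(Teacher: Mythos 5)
Your overall strategy coincides with the paper's: apply \eqref{wcf} across $\ell$, use the wall structure to see that every contributing tuple consists of exactly one rank $-1$ class $\gamma\in M_{\v,n}$ on $\ell$ together with rank-zero classes $\eta_i=(0,k_iH,\beta_i,m_i)$ of slope equal to the gradient of $\ell$, use $\chi(\eta_i,\eta_j)=0$ to kill every digraph except the star centred at the rank $-1$ vertex, pull out the common factor $\J_{b_0,w_0^+}(\gamma)\prod_i(-1)^{c_i}c_i\,\J(\eta_i)$ using $\chi(\eta_i,\al)=\chi(\eta_i,\gamma)$ and the absence of walls for the $\eta_i$ above $\ell$ together with Lemma \ref{lem-rank0-tilt-j}, and reduce everything to combinatorics of the Joyce--Song coefficients. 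All of this is Step 3 of the paper's proof.

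The genuine gap is at the crux you reduce to, which is both left unproved and mis-stated. With $\sigma$ running over ``all orderings of the fixed multiset'' (your words) and the $\eta_i$ distinct, the correct value of $\sum_\sigma(-1)^{A_\sigma}U(\sigma)$ is $(-1)^{q-1}2^{q-1}$, not $(-1)^{q-1}2^{q-1}/(q-1)!$: on the right-hand side of \eqref{wcf-prop} the $(q-1)!$ orderings of the $\eta_i$ appear as distinct tuples, so the $1/(q-1)!$ is absorbed there, not on the $U$-side. Your stated value is what one gets if $\sigma$ runs only over the $q$ positions of the rank $-1$ factor (equivalently, if all $\eta_i$ coincide). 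Concretely, for $q=3$ with distinct $\eta_1,\eta_2$ one has $U=\tfrac12,\,-1,\,\tfrac12$ for the rank $-1$ factor in position $e=1,2,3$ respectively, so the sum over all six orderings is $1+2+1=4=2^{q-1}$, twice your claimed value; your checks at $q\le 2$ cannot detect this because $(q-1)!=1$ there. Moreover neither of your proposed justifications closes the gap: the Lie-algebra/BCH paragraph is a heuristic whose rigorous content is exactly the coefficient computation being avoided, and that computation does not appear in \cite[Section 2]{feyz:rank-r-dt-theory-from-1}, which concerns the geometry of walls, not the Joyce--Song $S$- and $U$-coefficients. What the paper actually does (its Steps 1--2, following \cite[Theorem 5.8]{toda:curve-counting-theories-visa-stable-objects-i}) is evaluate these directly from Definition \ref{Def-U}: $S(\al_1,\ldots,\al_q;(b_0,w_0^+),(b_0,w_0^-))$ equals $(-1)^{e-1}$ if $e\in\{q-1,q\}$ and vanishes otherwise, the parameter $p$ in \eqref{u} is forced to equal $1$, and one gets $U=\frac{(-1)^{e-1}}{(e-1)!\,(q-e)!}$ with $e$ the position of the rank $-1$ class (citing \cite[Equation (72)]{toda:curve-counting-theories-visa-stable-objects-i} for the intermediate summation); the identity $\sum_{e=1}^{q}\frac{1}{2^{q-1}(e-1)!(q-e)!}=\frac{1}{(q-1)!}$ then produces \eqref{wcf-prop}. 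Your proof needs this evaluation, or an equivalent one, together with a consistent bookkeeping of orderings versus multiplicities of the rank-zero classes.
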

\begin{proof}
The argument is similar to \cite[Theorem 5.8]{toda:curve-counting-theories-visa-stable-objects-i}. In the wall-crossing formula \eqref{wcf}, we know the coefficient $U(\alpha_1, \dots , \alpha_q, (b\_0, w_0^+), (b\_0, w_0^-))$  is zero unless 
\begin{equation*}
\nu\_{b\_0, w\_0}(\al_1) = \dots = \nu\_{b\_0, w\_0}(\al_q)  
\end{equation*}	
where $(b\_0, w\_0)$ lies on the wall $\ell$. Since $\ell$ lies above $L_{\al}$, by definition one of the factors $\al_{e}$ (lying in position $e$) is of rank $-1$ and $\al_e \in M_{\v, n}$. The other factors $\al_i$ for $i \neq e$ are of rank zero with $\ch_1(\al_i)H^2 < kH^3$. The factors of rank zero have the same $\nu\_H$-slope \eqref{nuslope}, so they have the same $\nu_{b,w}$-slope with respect to any $(b,w) \in U$.

\textbf{Step 1.} We claim 
\begin{equation}\label{S}
S\left(\al_1, \dots, \al_{q} ;\, (b_0, w_0^+), (b_0, w_0^-)\right) = 
\begin{cases}
(-1)^{e-1} &\quad\text{if $e= q-1$ or $q$,}  \\
0 &\quad\text{otherwise.} \\ 
\end{cases}
\end{equation}

\noindent First assume $q >2$ and $e \leq q -2$. If $S\left(\al_1, \dots, \al_{q} ;\, (b\_0, w_0^+), (b\_0, w_0^-)\right) \neq 0$, then for $i= q -1$, we have $\nu_{b\_0, w_0^+}(\al_i) = \nu_{b\_0, w_0^+}(\al_{i+1})$, so we must have 
\begin{equation}\label{con}
\nu_{b\_0, w_0^-}(\al_1+ ... + \al_{q -1}) > \nu_{b\_0, w_0^-}(\al_{q}). 
\end{equation}
Let $\al_e = (-1, k_eH, \beta_e , m_e)$ and $\al_i = (0, k_iH, \beta_i , m_i)$ for $i \neq e$, then
\begin{equation*}
\frac{\beta_iH}{k_iH^3} = \nu\_{b\_0, w\_0}(\alpha_i)\ =\ \nu\_{b\_0, w\_0}(\alpha_e) = \frac{\beta_eH +w\_0H^3}{k_eH^3 +b\_0H^3}\,. 
\end{equation*} 
To have a $\nu\_{b\_0, w\_0}$-semistable object of class $\al_{e}$ in $\cA_{b_0}$, we must have $k_eH^3 +b\_0H^3 > 0$. Thus for $w_0^- < w\_0$, we get $\nu_{b\_0, w_0^-}(\al_e) < \nu_{b\_0, w_0^-}(\al_i)$ for $i\neq e$, so   
\begin{equation*}
\nu_{b\_0, w_0^-}(\al_e) = \frac{\beta_eH +w_0^-H^3}{k_eH^3 +b_0H^3} < \frac{\beta_eH +w_0^-H^3+ \sum_{i \neq e, q} \beta_iH }{k_eH^3 +b_0H^3+\sum_{i \neq e, q} k_iH^3} < \frac{\sum_{i \neq e, q} \beta_iH }{\sum_{i \neq e, q} k_iH^3} = \nu_{b\_0, w_0^-}(\al_i).
\end{equation*}
The middle term is equal to $\nu_{b\_0, w_0^-}(\al_1+ ... + \al_{q -1})$ and the right hand one is equal to $\nu_{b_0, w_0^-}(\alpha_q)$, so this is in contradiction to \eqref{con}. Thus $S\left(\al_1, \dots, \al_{q} , (b\_0, w_0^+), (b\_0, w_0^-)\right) = 0$.

The same argument as above shows that for $i =1, \dots, e-1$,
\begin{equation*}
\nu\_{b_0, w_0^+}(\alpha_i) = \nu\_{b_0, w_0^+}(\alpha_{i+1}) \qquad \text{and} \qquad
\nu\_{b_0, w_0^-}(\alpha_1 + \dots + \alpha_i) > \nu\_{b_0, w_0^-}(\alpha_{i+1} + \dots + \alpha_q), 
\end{equation*}  	
and if $e = q-1$, we have 
\begin{equation*}
\nu\_{b_0, w_0^+}(\alpha_e) > \nu\_{b_0, w_0^+}(\alpha_{e+1}) \qquad \text{and} \qquad
\nu\_{b_0, w_0^-}(\alpha_1 + \dots + \alpha_e) < \nu\_{b_0, w_0^-}(\alpha_e). 
\end{equation*}
Thus $S\left(\al_1, \dots, \al_{q} ;\, (b_0, w_0^+), (b, w_0^-)\right) = (-1)^{e-1}$ if $e= q$, or $q-1$.   

\bigskip
	
\textbf{Step 2.} The next claim is  
	\begin{equation}\label{formula for u}
	U(\al_1, \dots , \al_{q} ;\, (b\_0, w_0^+), (b\_0, w_0^-)) = \frac{(-1)^{e-1}}{(e-1)!(q -e)!}\,. 
	\end{equation}
	Here $e$ is again the position of the factor of rank $-1$. Note that the slope function $\nu_{b_0, w}$ of any class is a linear function of $w$. The same computations as in Step 1 implies that if for some $i < j$, 
	\begin{equation*}
	\nu_{b_0, w_0^-}(\al) = \nu_{b_0, w_0^-}(\al_i +\al_{i+1} + \dots + \al_j)
	\end{equation*}
	then $i=1$ and $j=q$. Thus the parameter $p$ in the definition \eqref{u} must be equal to $1$ 
	and we only need to pick $t \in [1, q]$ and $0 =a_0 < a_1 ...< a_t =q$. Define
	\begin{equation*}
	\mathcal{B}_i \coloneqq \al_{a_{i-1} +1} + \dots +\al_{a_i} \qquad \text{for $i \in [1, t]$}.
	\end{equation*}
	We require $\nu_{b_0, w_0^+}(\mathcal{B}_i) = \nu_{b_0, w_0^+}(\al_j)$ for $a_{i-1}< j \leq a_i$. Thus for any $i \in [1, t]$, either 
	\begin{enumerate*}
		\item $a_{i} -a_{i-1} =1$, or 
		\item $\al_j$'s are all of rank zero when $a_{i-1}< j \leq a_i$.  
	\end{enumerate*}
   Suppose $e \leq q-1$. By \eqref{S} in Step 1, if 
   \begin{equation*}
   S\big(\mathcal{B}_1, \dots,\mathcal{B}_t;\, (b\_0, w_0^+), (b\_0, w_0^-)\big) \neq 0,
   \end{equation*}
   then all classes $\al_i$ for $i > e$ lie in one bunch, i.e. $\mathcal{B}_t = \al_{e+1} + \dots + \al_{q}$ and $\mathcal{B}_{t-1} = \al_e$. Also the division of $\al_1, ..., \al_{e-1}$ into $\mathcal{B}_1, \dots, \mathcal{B}_{t-2}$ can be parametrised by non-decreasing surjective maps
		\begin{equation*}
		\psi \colon \{1, ..., e-1 \} \rightarrow \{1, ..., t-2 \}. 
		\end{equation*}
		Thus 
		\begin{equation*}
		U\left(\al_1, ..., \al_{k} ;\, (b\_0, w_0^+), (b\_0, w_0^-)\right) = \sum_{ \substack{
			e \, \in\,  [1, q]\,, \ t' \in [1,\, e-1] \\
			\psi \colon \{1, ...,\, e-1\} \rightarrow \{1, .., t'\} } }  (-1)^{t'}\frac{1}{(q -e)!}\prod_{i=1}^{t'}\frac{1}{\abs{\psi^{-1}(i)}!}.
		\end{equation*}    
		If $e=1$, then $t'=0$ and $\psi = 0$ in the above sum. 
		Finally, the claim \eqref{formula for u} follows by \cite[Equation (72)]{toda:curve-counting-theories-visa-stable-objects-i}.	
	\bigskip
	
\textbf{Step 3.} Now we apply the wall-crossing formula \eqref{wcf}. We know for $i, j \neq e$,
	\begin{equation*}\label{chi-2}
	\chi(\al_i, \al_j) = k_jH\beta_i - k_iH\beta_j = 0 = \chi(\al_j, \al_i). 
	\end{equation*}
	and so 
	\begin{align}\label{chi-1}
	\chi(\al_i, \al_e) = \chi(\al_i, \al). 
	\end{align}
	Thus there exists only one connected digraph $\Gamma$ which gives a non-zero term in the wall-crossing formula \eqref{wcf}: it is made of edges $i \rightarrow e$ for $i< e$ and $e \rightarrow j$ for $j > e$. The corresponding coefficient in \eqref{wcf} is
	\begin{align*}
	& \frac{(-1)^{q-1+\sum_{1 \leq i < j \leq q}\chi(\al_i, \al_j) }}{2^{q -1}} U\left(\al_1, \dots, \al_{q};\, (b_0, w_0^+), (b_0, w_0^-)\right) (-1)^{q-e} \prod_{i \neq e}\chi(\al_i, \al_e)  \\
	& =\  \frac{(-1)^{\sum_{i < e}\chi(\al_i, \al_e)+ \sum_{e < j}\chi(\al_j, \al_e) } }{2^{q-1}(e-1)!(q -e)!} \prod_{i \neq e}\chi(\al_i, \al_e)  \\
	& = \ \frac{1}{2^{q-1}(e-1)!(q -e)!} \prod_{i \neq e}(-1)^{\chi(\al_i, \al_e)}\chi(\al_i, \al_e) 
	\end{align*} 
	For any fixed list of classes $(\al_1, \dots, \al_{e-1} , \al_{e+1}, \dots , \al_q)$, we sum up the above coefficients over the position $e$ of the rank $-1$ class. Since 
	\begin{equation*}
	\sum_{1 \leq e \leq q} \frac{1}{2^{q-1}(e-1)!(q-e)!} = \frac{1}{(q-1)!}\,,
	\end{equation*}
	we obtain
	\begin{align*}
	\J_{b\_0, w_0^-}(\al) = \sum_{
		\substack{
			q \geq 1,\ \al_1, \dots , \al_q \,\in\, \ell\\	
			\al_1 \,\in\, M_{\v, n}\\
			\al_i = (0,\, k_iH,\, \beta_i,\, m_i) \, \text{for $i \in [2, q]$} \\
			0 < k_i < k\\
			\al_1+ \al_2+ \dots + \al_q = \al 
		}
	}\frac{1}{(k-1)!}\J_{b\_0, w_0^+}(\al_1) \prod_{i = 2}^q(-1)^{\chi(\al_i, \al_e)}\chi(\al_i, \al_e) \J_{b\_0, w_0^+}(\al_i)\,. 
	\end{align*}
	Finally, by definition of the line $L_{\alpha}$, there is no wall for rank zero classes $\al_i$ between $\ell$ and the large volume limit. Thus for $i \geq 2$, we have $\J_{b\_0, w_0^+}(\al_i) = \J_{b_0, \infty}(\al_i) = \J(\al_i)$ by Lemma \ref{lem-rank0-tilt-j}. Moreover by \eqref{chi-1}, $\chi(\al_i, \al_e) = \chi(\al_i, \al)$, so the claim \eqref{wcf-prop} follows. 
	
\end{proof}
   
   For any point $(b,w) \in U$ with $b> -n+ \frac{k}{3}$, we define 
   \begin{equation}\label{dtb,w}
   \pt_{b,w}^{\v, n}(x, y, z) \coloneqq \sum_{\alpha_1\,=\, (-1,\, k_1H,\, \beta_1,\, m_1) \,\in\, M_{\v, n}} \J_{b,w}(\alpha_1)\ x^{k_1}y^{\beta_1}z^{m_1} \,.
   \end{equation}	
 As a result of Proposition \ref{prop-wall-crossing-1}, we get the following. 
  \begin{Cor}\label{cor-dt-al}
  	Let $\ell$ be a wall for a class $\al = (-1,\, \tilde{k}H,\, \tilde{\beta},\, \tilde{m}) \in M_{\v, n}$ which lies above $L_{\al}$, and let $(b\_0, w_0^{\pm})$ be points just above and below the wall $\ell$. Then $\J_{b\_0, w_0^-}(\al)$ is equal to the coefficient of $x^{\tilde{k}}y^{\tilde{\beta}}z^{\tilde{m}}$ in the series 
  	\begin{equation*}
  	\pt_{b\_0,\, w_0^+}^{\v, n}(x, y, z)
  	\ \cdot \prod_{
  	\substack{\al' \, =\, (0, \, k'H,\, \beta',\, m') \,\in\, \ell \\
  	0 <\, k'\, < k
  }  
} 
\exp\left((-1)^{\chi(\al', \al)}\chi(\al', \al) \ \J(\al') \, x^{k'}y^{\beta'}z^{m'}  \right). 
  	\end{equation*}
  \end{Cor}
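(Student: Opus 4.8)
The plan is to read off Corollary \ref{cor-dt-al} as a generating-function repackaging of Proposition \ref{prop-wall-crossing-1}: there is no further geometric content, only the standard bookkeeping that trades an ordered Joyce--Song sum for the exponential of a multiset.

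First I would expand the right hand side. Expanding each exponential factor as
\begin{equation*}
\exp\big((-1)^{\chi(\al',\al)}\chi(\al',\al)\,\J(\al')\,x^{k'}y^{\beta'}z^{m'}\big)=\sum_{r\,\geq\,0}\frac{1}{r!}\big((-1)^{\chi(\al',\al)}\chi(\al',\al)\,\J(\al')\big)^{r}x^{rk'}y^{r\beta'}z^{rm'}
\end{equation*}
and multiplying by $\pt_{b_0,w_0^+}^{\v,n}$, a monomial is produced by choosing one rank $-1$ class $\al_1=(-1,k_1H,\beta_1,m_1)\in M_{\v,n}$ from the $\pt$-factor (with weight $\J_{b_0,w_0^+}(\al_1)$) together with a nonnegative multiplicity $r_{\al'}$ for each rank-zero class $\al'=(0,k'H,\beta',m')\in\ell$ with $0<k'<k$. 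Matching exponents in each graded piece shows the monomial $x^{\tilde k}y^{\tilde\beta}z^{\tilde m}$ is produced exactly when $\al_1+\sum_{\al'}r_{\al'}\al'=\al$ in $K(X)$, the rank equation being automatic. Hence the coefficient of $x^{\tilde k}y^{\tilde\beta}z^{\tilde m}$ equals
\begin{equation*}
\sum_{\substack{\al_1\,\in\,M_{\v,n},\ (r_{\al'})\\ \al_1+\sum_{\al'}r_{\al'}\al'\,=\,\al}}\J_{b_0,w_0^+}(\al_1)\prod_{\al'}\frac{1}{r_{\al'}!}\big((-1)^{\chi(\al',\al)}\chi(\al',\al)\,\J(\al')\big)^{r_{\al'}}.
\end{equation*}

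Next I would match this against Proposition \ref{prop-wall-crossing-1}, whose sum runs over ordered tuples $(\al_2,\dots,\al_q)$ of rank-zero classes on $\ell$ together with the rank $-1$ class $\al_1\in M_{\v,n}$, weighted by $\tfrac{1}{(q-1)!}\J_{b_0,w_0^+}(\al_1)\prod_{i=2}^q(-1)^{\chi(\al_i,\al)}\chi(\al_i,\al)\,\J(\al_i)$. Grouping the tuples by the multiset they realize, i.e. by $r_{\al'}=\#\{\,i\in[2,q]:\al_i=\al'\,\}$ with $\sum_{\al'}r_{\al'}=q-1$, the number of ordered tuples giving a fixed multiset is $\tfrac{(q-1)!}{\prod_{\al'}r_{\al'}!}$ and the ordered product collapses to $\prod_{\al'}\big((-1)^{\chi(\al',\al)}\chi(\al',\al)\,\J(\al')\big)^{r_{\al'}}$. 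The factor $\tfrac{1}{(q-1)!}$ cancels against this multinomial count, leaving precisely $\prod_{\al'}\tfrac{1}{r_{\al'}!}(\,\cdots)^{r_{\al'}}$, which is term-by-term the coefficient computed above; this identifies $\J_{b_0,w_0^-}(\al)$ with the asserted coefficient.

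The only genuine obstacle is the routine but error-prone passage between the ordered summation of the wall-crossing formula and the unordered multiset summation encoded by the exponential. In carrying it out I would check that the rank $-1$ factor is always the one absorbed into $\pt_{b_0,w_0^+}^{\v,n}$ while every rank-zero factor feeds the exponential product, and that the side constraints $\al_1\in M_{\v,n}$ and $0<k'<k$ are preserved under the rebracketing. Well-definedness of the (a priori infinite) product is not a separate issue: for each fixed target monomial the fixed values of $\tilde k$, $\tilde\beta.H$ and $\tilde m$, together with $k'>0$ and the Bogomolov bound $\beta'.H\ge0$, force only finitely many pairs $(\al_1,(r_{\al'}))$ to contribute, so all the manipulations above are finite.
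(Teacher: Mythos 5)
Your proposal is correct and is exactly the argument the paper intends: Corollary \ref{cor-dt-al} is stated in the paper as an immediate consequence of Proposition \ref{prop-wall-crossing-1} with no further proof, and your expansion of the exponentials followed by regrouping the ordered tuples of rank-zero factors into multisets, with the multinomial count $\frac{(q-1)!}{\prod_{\al'} r_{\al'}!}$ cancelling the $\frac{1}{(q-1)!}$, is precisely the intended bookkeeping. One point you leave implicit and should record: the series $\pt_{b_0, w_0^+}^{\v,n}$ runs over \emph{all} $\al_1 \in M_{\v,n}$, whereas Proposition \ref{prop-wall-crossing-1} additionally requires $\Pi(\al_1)$ to lie on $\ell$; this causes no discrepancy because whenever $\al_1 + \sum_{\al'} r_{\al'}\al' = \al$ with every $\al'$ of $\nu_H$-slope equal to the gradient $\mu(\ell)$, the displacement from $\Pi(\al)$ to $\Pi(\al_1)$ has slope $\frac{\sum_{\al'} r_{\al'}\beta'.H}{H^3\sum_{\al'} r_{\al'}k'} = \mu(\ell)$, so $\Pi(\al_1) \in \ell$ is automatic and no spurious terms contribute to the coefficient.
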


For any rank $-1$ class $\al_1 = (-1,\, k_1H,\, \beta_1,\, m_1) \in K(X)$, we define 
\begin{equation*}
\J_{\infty}(\al) \coloneqq \J_{b,\, w \gg 0\,}(\al) \qquad \text{where $b > \mu_H(\al_1) = -k_1$}.
\end{equation*}
We know $(e^{k_1H}\alpha_1)^{\vee}[1] = \left(1,\ 0,\ -\beta_1- \frac{1}{2}k_1^2H^2,\ m_1+k_1\beta_1.H +\frac{1}{3}k_1^3H^3 \right)$ and $\det(\al) = \cO_X(k)$ because $\Pic(X) = \mathbb{Z}.H$. Thus Lemma \ref{lem.large volume limit} implies   
\begin{equation*}
\J_{\infty}(\alpha_1) = \p_{m_1', \beta_1'}
\end{equation*}
where $\beta_1' \coloneqq \beta_1+ \frac{1}{2}k_1^2H^2$ and $m_1' \coloneqq -m_1 -k_1\beta_1.H -\frac{1}{3}k_1^3H^3$. 

\bigskip 

Take $b > -n+ \frac{k}{3}$ and let $w \rightarrow +\infty$, then the limit of the generating series \eqref{dtb,w} is   
   \begin{align}\label{pt-large volume}
\pt^{\v, n}(x, y, z) \coloneqq & \sum_{\alpha_1\,=\, (-1,\, k_1H,\, \beta_1,\, m_1) \,\in\, M_{\v, n}} \J_{\infty}(\alpha_1)\ x^{k_1}y^{\beta_1}z^{m_1} \,\nonumber \\
= & \sum_{\alpha_1\,=\, -e^{-k_1H}(1,\, 0,\, -\beta_1',\, m_1') \,\in\, M_{\v, n}} \p_{m_1', \beta_1'}\ x^{k_1}y^{\beta_1'-\frac{1}{2}k_1^2H^2}z^{-m_1' -k_1\beta_1'.H +\frac{1}{6}k_1^3H^3} \,.
\end{align}

 
   \begin{Def}\label{Def.A.alpha}
   Let $\al \in K(X)$ be either equal to $\v_n$ or $\al \in M_{\v, n}$. For any 
   real number $\mu \in \mathbb{R}$, we define
	\begin{equation*}
	A(\al, \mu) \coloneqq \pt^{\v, n}(x, y, z)  \ \cdot\  
	\prod_{\substack{  
			\al'\, =\, (0,\, k'H,\, \beta',\, m')\, \in\, K(X)\\
			0 \, <\,  k'\, < \, k \\
			\frac{\beta'.H}{k'H^3} \, > \, \mu
	}}
	\exp\left((-1)^{\chi_{\al'}}\,\chi_{\al'}\ \J(\al') \, x^{k'}y^{\beta'}z^{m'}  \right) 
	\end{equation*}
	where $\chi_{\al'}$ is defined via the following procedure: any non-zero term of $A(\al, \mu)$ is of the form 
	\begin{equation}\label{form.1}
	\J_{\infty}(-1, k_1H, \beta_1, m_1)\, x^{k_1}y^{\beta_1}z^{m_1}\cdot \prod_{
\substack{i\ \in\ [2,\, p] \\
	\alpha_i = (0,\, k_iH,\, \beta_i,\, m_i) 
}	
}	\frac{\left((-1)^{\chi\_{\alpha_i}}\chi\_{\alpha_i} \J(\alpha_i) \, x^{k_i}y^{\beta_i}z^{m_i}  \right)^{q_i}}{q_i !}
	\end{equation} 
	where $q_i \neq 0$ for $i \in [2,\, p]$. We may assume 
	\begin{equation}\label{p}
	\frac{\beta_pH}{k_pH^3} \geq \frac{\beta_{p-1}H}{k_{p-1}H^3}  \geq \dots \geq \frac{\beta_2H}{k_2H^3},
	\end{equation}
	then for any $i \in [2,\, p]$ we define
	\begin{equation}\label{chi}
	\chi\_{\alpha_i} \coloneqq \chi\big(\, \alpha_i\ , \ \al-\sum_{2 \leq\, j\, < i}q_j\,\alpha_j\, \big). 
	\end{equation}  
	   \end{Def}
We know the rank $-1$ destabilising factor $\al_1$ in \eqref{wcf-prop} also lies in $M_{\v, n}$, so we can apply Proposition \ref{prop-wall-crossing-1} to this factor as well. Then finiteness of the number of walls when we move to the large volume limit implies the following. 
\begin{Prop}\label{prop-safe}
	Take a class $\al = (-1, \tilde{k}H, \tilde{\beta}, \tilde{m}) \in M_{\v, n}$ and a point $(b,w) \in U$ above $L_{\alpha}$ which does not lie on a wall for class $\al$. Then $\J_{b,w}(\al)$ is the coefficient of $x^{\tilde{k}}y^{\tilde{\beta}}z^{\tilde{m}}$ in the series $A(\al, \mu(\ell))$ where $\mu(\ell)$ is the gradient of the line $\ell$ passing through $\Pi(\al)$ and $(b,w)$. 
\end{Prop}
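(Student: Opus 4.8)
The plan is to run a descending induction on the discriminant $\Delta_H(\al)$, and for each fixed class to descend through the finitely many walls lying above $L_\al$, applying Corollary \ref{cor-dt-al} at each crossing and feeding in the inductive hypothesis for the rank $-1$ factor. Since $\ch_0(\al)=-1\ne0$, every wall for $\al$ is a line through $\Pi(\al)$ by Proposition \ref{prop. locally finite set of walls}(a), and these walls are locally finite with none in the large volume limit. I would first set up the base of the descent: choose $(b,w)$ with $w\gg0$, so that the connecting-line slope $\mu(\ell)$ exceeds every rank-zero slope $\beta'.H/k'H^3$ appearing in the product, whence the product in $A(\al,\mu(\ell))$ is empty and $A(\al,\mu(\ell))=\pt^{\v,n}(x,y,z)$. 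By the identification $\J_\infty(\al)=\p_{m',\beta'}$ recorded before Definition \ref{Def.A.alpha}, the coefficient of $x^{\tilde k}y^{\tilde\beta}z^{\tilde m}$ is exactly $\J_{b,w\gg0}(\al)$, establishing the claim above all walls. At the bottom of the $\Delta_H$-induction, a class with $\Delta_H$ so small that it has no walls above $L_\al$ satisfies $\J_{b,w}(\al)=\J_\infty(\al)=\p$ throughout, so no separate argument is needed.

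The inductive step crosses a single wall $\ell_j$ of slope $\mu_j$. I would apply Corollary \ref{cor-dt-al}, which writes $\J_{b\_0,w_0^-}(\al)$ as the coefficient of $x^{\tilde k}y^{\tilde\beta}z^{\tilde m}$ in $\pt^{\v,n}_{b\_0,w_0^+}(x,y,z)$ times a product of $\exp$-factors over rank-zero classes $\al'$ of slope $\mu_j$ carrying weight $\chi(\al',\al)$. The rank $-1$ classes $\gamma$ occurring in $\pt^{\v,n}_{b\_0,w_0^+}$ that contribute satisfy $\gamma=\al-\delta'$ with $\delta'$ rank-zero of slope $\mu_j$; by the definition of $L_\al$ these $\gamma$ lie in $M_{\v,n}$ with $\Delta_H(\gamma)<\Delta_H(\al)$, and $\ell_j$ lies above $L_\gamma$. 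Because $\nu_{b\_0,w_0}(\gamma)=\nu_{b\_0,w_0}(\al)=\mu_j$, the points $\Pi(\gamma)$ and $\Pi(\al)$ are collinear with $(b\_0,w_0)$ on $\ell_j$, so the line from $\Pi(\gamma)$ to $(b\_0,w_0^+)$ has slope $\mu_j^+$; thus the inductive hypothesis gives $\J_{b\_0,w_0^+}(\gamma)$ as the coefficient of $x^{k_\gamma}y^{\beta_\gamma}z^{m_\gamma}$ in $A(\gamma,\mu_j^+)$. I would also record that by Lemma \ref{lem-rank0-tilt-j} the rank-zero invariants $\J(\al')$ are wall-independent constants, so the only moving parts are the combinatorial $\chi$-weights.

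The heart of the argument, and the step I expect to be the main obstacle, is matching the nested Euler-pairing bookkeeping term by term. After substituting the inductive expressions into Corollary \ref{cor-dt-al}, a generic summand is a PT seed $\gamma^0$ (from $\pt^{\v,n}$), higher-slope rank-zero factors (from $A(\gamma,\mu_j^+)$), and the slope-$\mu_j$ block $\delta'$ (from the new product), with $\gamma^0+(\text{higher})+\delta'=\al$. I must check this reproduces the corresponding summand of $A(\al,\mu_j^-)$ supplied by Definition \ref{Def.A.alpha}. Two facts drive this. First, $\chi(\al_i,\al_j)=0$ whenever $\al_i,\al_j$ are rank-zero of equal slope (the computation following \eqref{chi-1}), so within the slope-$\mu_j$ block the weight $\chi(\al',\al)$ of Corollary \ref{cor-dt-al} is unambiguous and agrees with Definition \ref{Def.A.alpha} for the lowest-slope block. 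Second, the increasing-slope ordering \eqref{p} together with the subtraction $\chi\_{\al_i}=\chi(\al_i,\al-\sum_{j<i}q_j\al_j)$ means that a higher-slope factor $\al_i$ is paired against $\al$ minus all strictly lower factors; since $\gamma=\al-\delta'$, this equals $\chi(\al_i,\gamma-\sum_{\text{lower higher}}q_j\al_j)$, which is precisely the weight produced by $A(\gamma,\mu_j^+)$. Hence the nested definition of $\chi\_{\al'}$ is tailored so that the slope-$\mu_j$ factors carry weights relative to $\al$ (matching the single wall) while the higher factors carry weights relative to the residual $\gamma$ (matching the inductive series).

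Carrying this out, I would verify that the multiplicities factor cleanly because the slope-$\mu_j$ and slope-$>\mu_j$ blocks occupy disjoint slopes: the $\tfrac1{q_i!}$ from each $\exp$ in $A(\gamma,\mu_j^+)$ and in the new product assemble into the $\tfrac1{q_i!}$ of $A(\al,\mu_j^-)$. Summing over $\gamma$, over $\gamma^0$, and over the block decompositions then shows $[x^{\tilde k}y^{\tilde\beta}z^{\tilde m}]A(\al,\mu_j^-)=\J_{b\_0,w_0^-}(\al)$; this is a combinatorial identity of the same type as \cite[Equation (72)]{toda:curve-counting-theories-visa-stable-objects-i} used in Proposition \ref{prop-wall-crossing-1}. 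Finally, local finiteness of walls for $\al$ lets me iterate the step down from the large volume limit to the given $(b,w)$, crossing walls in order of decreasing slope, and conclude that $\J_{b,w}(\al)$ is the coefficient of $x^{\tilde k}y^{\tilde\beta}z^{\tilde m}$ in $A(\al,\mu(\ell))$, as claimed.
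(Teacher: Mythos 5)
Your proposal is correct and follows essentially the same route as the paper's own proof: the same double induction (outer on $\Delta_H(\al)$, inner on the finitely many walls above $L_{\al}$), the same application of Corollary \ref{cor-dt-al} at each crossing with the inductive hypothesis substituted for the rank $-1$ seed along the line of slope $\mu_j^+$, and the same matching of the nested $\chi$-weights of Definition \ref{Def.A.alpha} against the single-wall weights $\chi(\al',\al)$ --- a matching you in fact spell out in more detail than the paper does. The only blemishes are cosmetic and at the same level of rigor as the paper's own treatment: for instance, in your base case the product in $A(\al,\mu(\ell))$ is not literally empty (there are rank-zero classes of arbitrarily large slope); rather, no product term can contribute to the fixed coefficient $x^{\tilde k}y^{\tilde\beta}z^{\tilde m}$ once $\mu(\ell)$ is large, because $\Delta_H(\mathrm{seed})\ge 0$ together with $\mathrm{seed}\in M_{\v,n}$ bounds the total rank-zero contribution.
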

\begin{proof}
	We know any non-trivial sentence in $A(\al, \mu(\ell))$ which is $\mathbb{Q}$-multiple of $x^{\tilde{k}}y^{\tilde{\beta}}z^{\tilde{m}}$ is of the form \eqref{form.1} satisfying \eqref{p} and
	\begin{equation*}
	\alpha = (-1, \tilde{k}H, \tilde{\beta}, \tilde{m}) = (-1, k_1H, \beta_1, m_1) + \sum_{i=2}^p q_i(0,\, k_iH,\, \beta_i,\, m_i ). 
	\end{equation*} 
	Therefore there is a wall $\ell_1$ for class $\alpha$ which is made by classes 
	\begin{equation*}
	v_2 \coloneqq \alpha -q_2(0, k_2H, \beta_2, m_2) \qquad \text{and} \qquad w_2 \coloneqq q_2(0, k_2H, \beta_2, m_2).
	\end{equation*}
	The wall $\ell_1 \cap U$ lies above $\ell \cap U$ as it is of slope $\frac{\beta_2H}{k_2H^3} > \mu(\ell)$ and both lines $\ell, \ell_1$ pass through $\Pi(\alpha)$. Moreover \cite[Lemma 3.2]{feyz:rank-r-dt-theory-from-0} implies that 
	\begin{equation*}
	\Delta_H(v_2) < \Delta_H(\alpha).
	\end{equation*}
	 Similarly, there is a numerical wall $\ell_i$ for class 
	 $$
	 v_i \coloneqq \alpha - \sum_{j=2}^{i}q_j(0, k_jH, \beta_j, m_j)
	 $$ 
	 for any $i \in [2, p-1]$ which is made by the destabilising factors  	 
	 \begin{equation*}
	 v_{i+1} = \alpha -\sum_{j=2}^{i+1} q_j(0, k_jH, \beta_j, m_j) \qquad \text{and} \qquad w_{i+1} \coloneqq q_{i+1}(0, k_{i+1}H, \beta_{i+1}, m_{i+1}). 
	 \end{equation*} 
	 Note that the ordering \eqref{p} implies that for any $i \in [1, p-1]$ the wall $\ell_{i} \cap U$ lies above or on $\ell_{i-1} \cap U$ where $\ell_0 = \ell$. Also \cite[Lemma 3.2]{feyz:rank-r-dt-theory-from-0} gives 
	 \begin{equation*}
	 0 \leq \Delta_H(-1, k_1H, \beta_1, m_1) = \Delta_H(v_p) < \Delta_H(v_{p-1}) < \dots < \Delta_H(v_2) < \Delta_H(\alpha).  
	 \end{equation*}
	 
	
	
	\vspace{.2 cm}
	
	To prove the main statement, we proceed by induction on $\Delta_H(\al)$. If $\Delta_H(\al) = 0$, then there is no wall for $\al$ by \cite[Lemma 3.2]{feyz:rank-r-dt-theory-from-0}, thus $\J_{b,w}(\alpha) =\J_{\infty}(\al)$. The above argument also shows that the coefficient of $x^{\tilde{k}}y^{\tilde{\beta}}z^{\tilde{m}}$ in the series $A(\al, \mu(\ell))$ is also $\J_{\infty}(\alpha)$, so the claim follows. 
	   
	
	Now assume the claim holds for all classes $\al \in M_{\v , n}$ with discriminant less than $\Delta_H(\al)$. Since there are only finitely many walls for class $\alpha$ above $L_{\al}$, we can prove the claim for $\alpha$ by induction on walls. 
	
	If $(b,w)$ lies in the large volume limit for $\al$, the claim is trivial. So assume the claim holds for $(b,w^{+})$ just above a wall $\ell_{\alpha}$ for class $\alpha$. 
	Finiteness of the number of walls for each class implies that we may choose $(b, w^{\pm})$ just above and below the wall $\ell_{\al}$ so that they are not on walls for the destabilising classes of $\alpha$ along the wall $\ell_{\alpha}$. Corollary \ref{cor-dt-al} implies that $\J_{b, w^-}(\alpha)$ is the coefficient of $x^{\tilde{k}}y^{\tilde{\beta}}z^{\tilde{m}}$ in the series 
	\begin{equation*}
M \coloneqq \pt_{b,\, w^+}^{\v, n}(x, y, z)\ \cdot \prod_{
	\substack{\al' \, =\, (0, \, k'H,\, \beta',\, m') \,\in\, \ell_{\al} \\
		0 <\, k'\, < k
	}  
} 
\exp\left((-1)^{\chi(\al', \al)}\chi(\al', \al) \ \J(\al') \, x^{c'}y^{\beta'}z^{m'}  \right). 
\end{equation*}
We claim it is equal to the coefficient of $x^{\tilde{k}}y^{\tilde{\beta}}z^{\tilde{m}}$ in the series $A(\alpha, \mu(\ell_{\al}^-))$ where $\ell_{\al}^-$ passes through $\Pi(\al)$ and $(b,w^-)$.  

Any non-zero $Q$-valued multiple of $x^{\tilde{k}}y^{\tilde{\beta}}z^{\tilde{m}}$ in $M$ is of the form 
\begin{equation*}
\J_{b, w^+}(-1, k_1H, \beta_1, m_1)x^{k_1}y^{\beta_1}z^{m_1} \cdot \prod_{
	\substack{i\ \in\ [2,\, p] \\
		\alpha_i = (0,\, k_iH,\, \beta_i,\, m_i) \\
		\frac{\beta_iH}{k_iH^3}\, =\, \mu(\ell_{\al})
	}	
}	\frac{\left((-1)^{\chi(\al_i, \alpha)}\chi(\alpha_i, \alpha) \J_{\infty}(\alpha_i) \, x^{k_i}y^{\beta_i}z^{m_i}  \right)^{q_i}}{q_i !}
\end{equation*}
If $p \geq 2$, we know $\Delta_H(-1, k_1H, \beta_1, m_1) < \Delta_H(\alpha)$. Thus applying the induction on discriminant and on walls implies that $\J_{b, w^-}(\alpha)$ is the coefficient of $x^{\tilde{k}}y^{\tilde{\beta}}z^{\tilde{m}}$ in the series 
	\begin{equation*}
	\left( \sum_{
\substack{
\alpha_1 \in M_{\v,\, n}\\
\Pi(\alpha_1)\, \in\, \ell_{\al}
}	
} A\left(\al_1, \mu(\ell_{\alpha_1}^+)\right) \right)
 \cdot \prod_{
 	\substack{\al_2 \, =\, (0, \, k_2H,\, \beta_2, \, m_2)\\ 
 		0 <\, k_2 \, < k \\
 		\frac{\beta_2H}{k_2H^3}\, =\, \mu(\ell_{\al})
 	}  
 } 
 \exp\left((-1)^{\chi(\al_2, \al)}\chi(\al_2,\al) \ \J(\al_2) \, x^{c_2}y^{\beta_2}z^{m_2}  \right)
	\end{equation*}
	where $\ell_{\alpha_1}^+$ is the line passing through $\Pi(\alpha_1)$ and $(b, w^+)$. Then the coefficient of $x^{\tilde{k}}y^{\tilde{\beta}}z^{\tilde{m}}$ in the above series is the same as the one in $A(\alpha, \mu(\ell_{\al}^-))$
	, so the claim follows.   
\end{proof}
Finally Theorem \ref{thm-rank0-n} follows from the following. 
\begin{Thm}\label{thm-rak--n-section 5}
	The coefficient of $x^{n+k}y^{\beta-\frac{n^2H^2}{2}}z^{m+ \frac{n^3H^3}{6}}$ in the series
	\begin{equation*}
	\frac{(-1)^{\chi(\cO_X(-n), \v) +1}}{\chi(\cO_X(-n), \v)}A(\v_{n_0}, \mu(\ell_f^-))
	\end{equation*}
	is equal to $\J(\v)$. Here $\ell_f^-$ is a line passing through $\Pi(v_{n_0})$ which lies just below $\ell_f$. 
\end{Thm}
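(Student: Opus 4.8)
The idea is to run the wall-crossing of this section for the class $\v_n$ itself, descending from the large-volume limit down to $\ell_f^-$, and to identify $\J(\v)$ as the one wall-crossing contribution that the series $A(\v_n,\mu(\ell_f^-))$ fails to record. The product in the definition of $A(\v_n,\mu)$ ranges only over rank-zero classes with $0<k'<k$, whereas at the Joyce--Song wall $\v_n$ splits as $\v+[\cO_X(-n)[1]]$ with the rank-zero factor $\v$ of \emph{maximal} $\ch_1(\v)H^2/H^3=k$. By the wall-structure theorem of this section, $\ch_1(E_j)H=kH^3$ occurs only at the Joyce--Song wall, so every other wall of $\v_n$ has all of its rank-zero factors with $k'<k$; these are exactly the contributions produced by the product in $A$.

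\textbf{Extending the machinery to $\v_n$.} Because every wall of $\v_n$ other than the Joyce--Song wall has its rank $-1$ factor in $M_{\v,n}$ and all rank-zero factors with $\ch_1H^2/H^3<k$, the inductive argument of Proposition \ref{prop-safe}, via Corollary \ref{cor-dt-al} and Proposition \ref{prop-wall-crossing-1}, applies verbatim to track $\J_{b,w}(\v_n)$ across each of those walls. Running this down to $\ell_f^-$, the coefficient of the $\v_n$-monomial $x^{n+k}y^{\beta-\frac{n^2H^2}{2}}z^{m+\frac{n^3H^3}{6}}$ (by \eqref{class vn}) in $A(\v_n,\mu(\ell_f^-))$ therefore equals $\J_{\ell_f^-}(\v_n)$ minus the single contribution of the Joyce--Song wall in which $\v$ appears as one rank-zero factor of full $\ch_1H^2/H^3=k$.

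\textbf{Evaluating the missing contribution.} At the Joyce--Song wall the rank $-1$ factor is $\cO_X(-n)[1]$ up to a torsion twist, with $\J_\infty(\cO_X(-n)[1])=\p_{0,0}=1$ by the formula $\J_\infty(\alpha_1)=\p_{m_1',\beta_1'}$ preceding \eqref{pt-large volume}. Since $\v$ carries the maximal $\ch_1H^2/H^3$, a decomposition of $\v_n$ containing the single factor $\v$ admits no further rank-zero factor, so the missing term is the clean two-term expression \eqref{sentence-2-terms} for the pair $\{[\cO_X(-n)[1]],\,\v\}$. Evaluating the Euler pairing $\chi([\cO_X(-n)[1]],\v)=-\chi(\cO_X(-n),\v)$ and using that for $n\gg0$ the Joyce--Song wall lies in the large-volume chamber of the rank-zero class $\v$, so that $\J_{b,w_0}(\v)=\J_{b,w\gg0}(\v)=\J(\v)$ by Lemma \ref{lem-rank0-tilt-j}, this contribution equals $(-1)^{\chi}\chi\,\J(\v)$ with $\chi:=\chi(\cO_X(-n),\v)$.

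\textbf{Assembly and main obstacle.} By Conjecture \ref{conjecture} there is no $\nu_{b,w}$-semistable object of class $\v_n$ below $\ell_f$, so $\J_{\ell_f^-}(\v_n)=0$. Combining this with the previous two steps gives, at the level of the $\v_n$-monomial, $0=A(\v_n,\mu(\ell_f^-))+(-1)^{\chi}\chi\,\J(\v)$, whence $A(\v_n,\mu(\ell_f^-))=(-1)^{\chi+1}\chi\,\J(\v)$, and multiplication by $(-1)^{\chi+1}/\chi$ yields $\J(\v)$. I expect the main obstacle to be the bookkeeping that isolates exactly this one missing term: one must check that every reducible decomposition of the rank-zero part of $\v_n$ at the Joyce--Song wall (into factors with $k'<k$) is produced by the exponential product of $A$ in precisely the same way as it enters $\J_{\ell_f^-}(\v_n)$ through the Joyce--Song formula \eqref{wcf}, so that all such terms cancel and leave only the single two-term contribution proportional to $\J(\v)$. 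Matching the combinatorics of Proposition \ref{prop-wall-crossing-1} termwise against \eqref{wcf} is the delicate point.
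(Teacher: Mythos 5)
Your proposal is correct and follows essentially the same route as the paper: descend from the large-volume limit through the walls for $\v_n$, observe that every wall except the Joyce--Song wall has only rank-zero factors with $k'<k$ and is therefore recorded by $A(\v_n,\mu)$ via Proposition \ref{prop-safe}, isolate the single unrecorded two-term Joyce--Song contribution $(-1)^{\chi(\cO_X(-n),\v)}\chi(\cO_X(-n),\v)\,\J(\v)$ using Lemma \ref{lem-rank0-tilt-j} and rigidity of $\cO_X(-n)$, and conclude from the vanishing $\J_{b,w}(\v_n)=0$ below $\ell_f$. The sign bookkeeping and the final algebra also match the paper's argument.
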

\begin{proof}
	First consider the Joyce-Song wall $\ell_{\js}$ for class $\v_n$ that $\cO_X(-n)[1]$ is making. We know the destabilising factors are either of the form 
	\begin{equation*}
	v_1 = [\cO_X(-n)][1] \qquad \text{and} \qquad v_2=\v	
	\end{equation*}
	or 
	\begin{equation}\label{form 2}
	v_1 \in M_{\v, n} \qquad \text{and} \qquad v_2 = (0, k_2H, \beta_2, m_2) \qquad \text{with $0 < k_2 < k$}. 
	\end{equation}
	Let $(b,w^{}\pm)$ be point just above (below) the Joyce-Song wall. Then applying wall-crossing formula \eqref{wcf} and the same argument as in Proposition \ref{prop-wall-crossing-1} imply 
	\begin{align*}
	\J_{b, w^-}(\v_n) = & (-1)^{\chi(\cO_X(-n), \v)}\chi(\cO_X(-n), \v)\, \J_{b, w^+}([\cO_X(-n)[1]])\J_{b, w^+}(\v)\, \\
	&+\sum_{
		\substack{
			q \geq 1,\ \al_1, \dots , \al_q \,\in\, \ell_{\text{JS}}\\	
			\al_1 \,\in\, M_{\v, n}\\
			\al_i = (0,\, k_iH,\, \beta_i,\, m_i) \, \text{for $i \in [2, q]$} \\
			0 < k_i < k\\
			\al_1+ \al_2+ \dots + \al_q = \v_n 
		}
	}\frac{1}{(q-1)!}\J_{b, w^+}(\al_1) \prod_{i = 2}^q(-1)^{\chi(\al_i, \al)}\chi(\al_i, \al) \J(\al_i)\,. 
	\end{align*}
	There is no wall for class $\v$ up to the large volume limit, so $\J_{b, w^+}(\v) = \J_{b, \infty}(\v)$ which is equal to $\J(\v)$ by Lemma \ref{lem-rank0-tilt-j}. Moreover $\cO_X(-n)$ is a rigid objects, so $\J_{b, w^+}([\cO_X(-n)]) =1$ as $\Pic(X) = \mathbb{Z}.H$. We know the destabilising classes along all walls above the Joyce-Song wall are of the form \eqref{form 2}. Thus applying a similar argument as in Proposition \ref{prop-safe} shows $	\J_{b, w^-}(\v_n)$ is the coefficient of $x^{n+k}y^{\beta-\frac{n^2H^2}{2}}z^{m+ \frac{n^3H^3}{6}}$ in the series 
	\begin{equation*}
	(-1)^{\chi(\cO_X(-n), \v)}\chi(\cO_X(-n), \v)\, \J(\v) x^{n+k}y^{\beta-\frac{n^2H^2}{2}}z^{m+ \frac{n^3H^3}{6}} \ +\  A(\v_n, \mu(\ell_{\js}^-)). 
	\end{equation*}  
    For walls below $\ell_{\text{JS}}$, the destabilising factors are all again of the form \eqref{form 2}, so $\J_{b, w_f^-}(\v_n)$ for $(b, w_f^-)$ just below $\ell_f$ is equal to the coefficient of $x^{n+k}y^{\beta-\frac{n^2H^2}{2}}z^{m+ \frac{n^3H^3}{6}}$ in the series 
		\begin{equation*}
	(-1)^{\chi(\cO_X(-n), \v)}\chi(\cO_X(-n), \v)\, \J(\v) x^{n+k}y^{\beta-\frac{n^2H^2}{2}}z^{m+ \frac{n^3H^3}{6}} \ +\  A(\v_n, \mu(\ell_{f}^-)). 
	\end{equation*} 
	But $\J_{b, w_f^-}(\v_n) = 0$ as there is no $\nu_{b, w}$-semistable object of class $\v_n$ for $(b,w)$ below $\ell_f$ and so the claim follows.  
\end{proof}

\section{Rank 2 DT theory}\label{section.rank 2}
The goal of this section is to prove the following. 
\begin{Thm}\label{thm-rank2}
		Let $X$ be a smooth Calabi-Yau 3-fold with $\Pic(X) = \mathbb{Z}.H$. For any rank 2 class $\al \in K(X)$, there is an explicit formula expressing $\J(\al)$ in terms of rank zero and one DT invariants. 
\end{Thm}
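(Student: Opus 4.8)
The plan is to run the strategy of Method II (Section~\ref{js section}) one rank higher. For $n\gg0$ set $\w_n:=\w-[\cO_X(-n)]$, the class of $\mathrm{cone}(s)$ for a section $s\colon\cO_X(-n)\to F$ of an $H$-Gieseker semistable sheaf $F$ of class $\w$; since $\ch_0(\w)=2$ we have $\ch_0(\w_n)=1$. First I would do wall-crossing in $U$ for the rank $1$ class $\w_n$ in place of $\w$. As in Section~\ref{js section}, for $b<\mu_H(\w_n)$ and $w\gg0$ the $\nu_{b,w}$-semistable objects of class $\w_n$ are rank $1$ Gieseker sheaves, that is (twisted) ideal sheaves, so their count is a rank $1$ DT invariant; by \cite{feyz:rank-r-dt-theory-from-0} there are finitely many walls between this large volume chamber and a final wall $\ell_f$ below which $\J_{b,w}(\w_n)=0$, and among them sits the \emph{Joyce--Song wall} $\ell_{\js}$ on which $\w_n$ splits as $\w$ and $\cO_X(-n)[1]$. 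Crossing $\ell_{\js}$ is what isolates $\J(\w)$.

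The structural input I would take from \cite{feyz:rank-r-dt-theory-from-0} is the analogue of the (unlabelled) theorem of Section~\ref{js section}: along any wall $\ell$ for $\w_n$ other than $\ell_{\js}$, the $\nu_{b,w}$-semistable factors consist of a single distinguished rank $1$ factor lying in an explicit set $M_{\w,n}$ of rank $1$ classes, together with finitely many rank $0$ factors $(0,k_iH,\beta_i,m_i)$ with $k_i>0$ bounded above in terms of $\w$; whereas on $\ell_{\js}$ the factors are $\cO_X(-n)[1]$ of rank $-1$ and $\w$ of rank $2$. Using $\Pic(X)=\Z.H$ I would make the defining inequalities of $M_{\w,n}$ explicit---so that all relevant $\ch_1$ are integer multiples of $H$, determinants are powers of $\cO_X(1)$, and the discriminants $\Delta_H$ take discrete values---and I would introduce lines $L_\al$ exactly as in Section~\ref{js section} to drive an induction on $\Delta_H$.

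With this in hand the wall-crossing formula \eqref{wcf} should collapse as in Proposition~\ref{prop-wall-crossing-1}. Since $\Pic(X)=\Z.H$, any two rank $0$ factors on a common wall have $\ch_1$ proportional to $H$ and equal $\nu_H$-slope, so $\chi(\al_i,\al_j)=0$ and only the interactions between each rank $0$ factor and the distinguished rank $1$ factor survive. Evaluating the coefficients $S$ and $U$ of Definition~\ref{Def-U} precisely as in Steps~1--3 of Proposition~\ref{prop-wall-crossing-1} turns the sum into an exponential product, which I would package into a generating series $A(\w_n,\mu)$ modelled on Definition~\ref{Def.A.alpha}, now built from a rank $1$ DT series $\dt^{\w,n}$ (whose coefficients are the large-volume limits $\J_\infty$ of the classes in $M_{\w,n}$) in place of $\pt^{\v,n}$. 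An induction on $\Delta_H$ and on walls, identical to Proposition~\ref{prop-safe}, would then identify $\J_{b,w}(\al)$ for $\al\in M_{\w,n}$ with a coefficient of $A(\al,\mu)$.

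To finish I would argue as in Theorem~\ref{thm-rak--n-section 5}: crossing $\ell_{\js}$ contributes the term $(-1)^{\chi(\cO_X(-n),\w)}\chi(\cO_X(-n),\w)\,\J_{b,w^+}([\cO_X(-n)[1]])\,\J(\w)$, with $\J_{b,w^+}([\cO_X(-n)[1]])=1$ because $\cO_X(-n)$ is rigid and $\Pic(X)=\Z.H$, on top of $A(\w_n,\mu(\ell_{\js}^-))$; continuing down to just below $\ell_f$, where $\J_{b,w}(\w_n)=0$, then exhibits $\J(\w)$ as the appropriate coefficient of $\tfrac{(-1)^{\chi(\cO_X(-n),\w)+1}}{\chi(\cO_X(-n),\w)}\,A(\w_n,\mu(\ell_f^-))$, an explicit expression in rank $0$ and rank $1$ DT invariants (which Theorem~\ref{thm-rank0-n} further reduces to PT invariants). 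I expect the main obstacle to be the separation of $\ell_{\js}$ from the remaining walls: unlike the rank $0$ case, where the Joyce--Song wall has the same numerical shape as the others, here it alone carries a rank $2$ factor, so one must show that no wall other than $\ell_{\js}$ supports a destabilising factor of rank $\ge2$, which is exactly what keeps the whole wall-crossing inside rank $1$ and rank $0$ data. This confinement is the heart of \cite{feyz:rank-r-dt-theory-from-0}, and converting its estimates into the explicit set $M_{\w,n}$ together with the vanishing $\chi(\al_i,\al_j)=0$ is precisely where the Bogomolov--Gieseker conjecture, the $n\gg0$ positioning of $\Pi(\w_n)$, and the hypothesis $\Pic(X)=\Z.H$ must be combined.
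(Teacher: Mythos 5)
Your proposal reproduces, essentially verbatim, the paper's treatment of \emph{one} of the two cases --- the one where $\ch_1(\al)=kH$ with $k$ odd (Case (i) of Section \ref{section.rank 2} and Proposition \ref{porp-rank2-final}): twist so that $\w=(2,H,\beta,m)$, pass to the rank $1$ class $\w_n=\w-[\cO_X(-n)]$, show via \cite{feyz:rank-r-dt-theory-from-0} that every wall carries one distinguished rank $1$ factor in $\widetilde{M}_{\w,n}$ plus rank $0$ factors, collapse \eqref{wcf} exactly as in Proposition \ref{prop-wall-crossing-1}, and extract $\J(\w)$ as a coefficient of $\widetilde{A}(\w_n,\mu(\ell_f^-))$. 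In that case your key step --- that the Joyce--Song wall is the unique wall carrying the splitting $\w+\cO_X(-n)[1]$, and that crossing it contributes $(-1)^{\chi}\chi\,\J(\w)$ --- is justified because $\gcd(2,k)=1$ forces every tilt-semistable sheaf of class $\w$ to be slope-stable, hence $\J_{\mathrm{ti}}(\w)=\J(\w)$.

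However, there is a genuine gap: your uniform treatment fails when $k$ is even, i.e.\ when $\w$ can be normalised to $(2,0,\beta,m)$ (Case (ii) of the paper). With $\Pic(X)=\Z.H$ the slope $\mu_H(\w)=k/2$ is then an integer, so $\w$ admits strictly semistable sheaves splitting into two rank $1$ pieces of equal slope. This breaks your argument in two places. First, on the Joyce--Song wall the decomposition $\w+[\cO_X(-n)][1]$ is \emph{not} the only one: the paper shows there are also splittings $v_1=(1,0,\beta',m')$, $v_2=\bigl(0,nH,\beta-\beta'-\tfrac{n^2H^2}{2},m-m'+\tfrac{n^3H^3}{6}\bigr)$ with $2\beta'.H=\beta.H$, and three-factor splittings $v_1=(1,0,\beta',m')$, $v_1'=(1,0,\beta'',m'')$, $v_2=[\cO_X(-n)][1]$; these produce the extra correction term $A_{\w_n}$ in \eqref{w}, a sum of products of two rank $1$ DT invariants, which your formula omits. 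Second, what the wall-crossing isolates in this case is $\J_{\mathrm{ti}}(\w)$, not the Gieseker invariant $\J(\w)$: since strictly Gieseker-semistable sheaves of class $\w$ exist, one needs the additional tilt-to-Gieseker wall-crossing \eqref{wcf-tilt-gieseker}, which contributes the terms $(-1)^{m_1-m_2}(m_1-m_2)\,\qe_{m_1,\beta_1}\qe_{m_2,\beta_2}$ over pairs with $\beta_1.H=\beta_2.H$ and $m_1>m_2$. Also note that in the even case there is nothing to cross \emph{below} $\ell_{\js}$: by \cite[Theorem B.2]{feyz:rank-r-dt-theory-from-0} semistable objects of class $\w_n$ vanish immediately below the Joyce--Song wall, so the vanishing you invoke at $\ell_f$ is replaced by vanishing at $\ell_{\js}$ itself. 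Without the parity split and these two corrections the claimed formula is simply wrong for even $\ch_1$.
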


We consider two different cases depending on $\ch_1(\al)$ whether it is even or odd multiple of $H$.    


\textbf{Case (i)} Fix a rank 2 class $\w = (2, H, \beta, m )\in K(X)$ with
\beq{vdef}
\ch\_H(\w)\=(2,\,1,\,s,\,d).
\eeq
Then we pick $n\gg0$ and set $\w_n:=\w-[\cO_X(-n)]$, so
\begin{equation*}
\ch\_H(\w_n)\=\Big(\!1, \ n+1 ,\ s-\tfrac12n^2 , \ d +\tfrac16n^3\Big).
\end{equation*}
Consider the line $\ell_f$ given by $B_{b,w}(\w_n) = 0$ \eqref{quadratic form}. One can easily show that for $n \gg 0$, $\ell_f$ intersects $\partial U$ at two points with $b_2^f<0<b_1^f$ such that $b_1^f-b_2^f > n$. 

Let $\ell$ be a wall for an object $E$ of class $\w_n$ with the destabilising sequence $E' \rightarrow E \rightarrow E''$. Then by \cite[Theorem 3.3]{feyz:rank-r-dt-theory-from-0}, one of the factors $E_1$ is a rank one sheaf and the other factor $E_0$ is a rank zero sheaf. We know $\ell$ lies above or on $\ell_f$ and passes through $\Pi(E_1)$, see Figure \ref{figUwm-rank 2}. Thus $0< b_1^f \leq \mu(E_1)$ and so $\ch_1(E_0).H^2 \leq nH^3$. Then the same argument as in \cite[Lemma B.3]{feyz:rank-r-dt-theory-from-0} implies that there is no wall for $E_0$ above $\ell_f$, so we only need to analyse the destabilising factor $E_1$.     

\begin{figure}[h]
	\begin{centering}
		\definecolor{zzttqq}{rgb}{0.27,0.27,0.27}
		\definecolor{qqqqff}{rgb}{0.33,0.33,0.33}
		\definecolor{uququq}{rgb}{0.25,0.25,0.25}
		\definecolor{xdxdff}{rgb}{0.66,0.66,0.66}
		
		\begin{tikzpicture}[line cap=round,line join=round,>=triangle 45,x=1cm,y=0.9cm]
		
		\draw[->,color=black] (-4,0) -- (4,0);
		\draw  (4, 0) node [right ] {$b,\,\frac{\ch_1\!.\;H^2}{\ch_0H^3}$};
		
		\fill [fill=gray!25!white] (0,0) parabola (3,4) parabola [bend at end] (-3,4) parabola [bend at end] (0,0);
		
		
		\draw  (0,0) parabola (3.1,4.27); 
		\draw  (0,0) parabola (-3.1,4.27); 
		\draw  (3.8 , 3.6) node [above] {$w= \frac{b^2}{2}$};
		
		\draw[->,color=black] (0,-2) -- (0,4.7);
		\draw  (1, 4.3) node [above ] {$w,\,\frac{\ch_2\!.\;H}{\ch_0H^3}$};
		
		\draw [dashed] (-2.45,2.65) -- (-2.45,0);
		\draw [dashed] (1.8, -1.5) -- (1.8,4);
		\draw [thick] (1.8, -1.5) -- (-3.8,4);
		\draw  (1.8, -1.5) -- (-1,4);
		
		
		\draw  (-2.45, 0) node[below]{$b_2^f$};
		\draw  (.25, 0) node[below]{$b_1^f$};
		
		\draw  (1.8,-1.5) node[right] {{\small$\Pi(\w_n)$}};
		\draw (-3.8,4) node [left]{$\ell_{f}$};
		\draw (-.95, 4) node [left]{$\ell$};
		\draw  (2.4, 3.2) node [above] {\Large{$U$}};
		\fill (1.8,-1.5) circle (2pt);
		\fill (-2.45,2.65) circle (1.5pt);
		\fill (.25,.03) circle (1.5pt);
		
		\end{tikzpicture}
		
		\caption{The final wall $\ell_{f}$ for class $\w_n$}
		\label{figUwm-rank 2}
	\end{centering}
\end{figure}
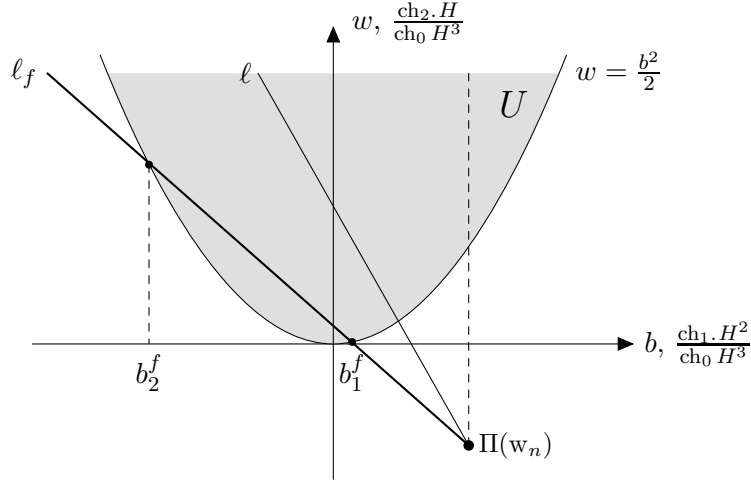


Define $\widetilde{M}_{\w, n} \subset K(X)$ to be the set of all rank one classes $\al = (1, \ch_1, \ch_2, \ch_3) \in K(X)$ with $\Delta_H(\al) \geq 0$ such that 
\begin{equation*}
0 < \frac{\ch_1H^2}{H^3} < n+1 \qquad \text{and} \qquad \text{$\Pi(\al)$ lies above or on $\ell_{f}$}. 
\end{equation*}  
As explained in \cite[Section 3]{feyz:rank-r-dt-theory-from-0}, for any class $\alpha \in \widetilde{M}_{\w, n}$, there is a unique line $\ell_{\alpha}$ going through $\Pi(\alpha)$ which intersects $\partial U$ at two points with $b$-values $a_v < b_v \leq \mu_H(\al)$ satisfying
\begin{equation*}\label{l-v}
H^3(b_v -a_v) \= \ch_1(\alpha)H^2 -b_vH^3\ \geq\ 0.
\end{equation*}
The area above the line $\ell_{\al}$ is called the \emph{safe area} for $\alpha$ and denoted by
\begin{equation}\label{u-al}
U_{\alpha} \coloneqq \left\{(b,w) \in U\colon b < \frac{\ch_1(\alpha)H^2}{H^3} \ \ \text{and} \ \text{$(b,w)$ lies above $\ell_{\al}$} \right\}. 
\end{equation}
Take an object $E \in \cA_b$ of class $\al \in \widetilde{M}_{\w, n}$ which is $\nu_{b,w}$-semistable for some $(b,w)\in U_{\al}$. Then \cite[Proposition 3.1]{feyz:rank-r-dt-theory-from-0} implies that 
\begin{enumerate*}
	\item $E$ is a sheaf, 
	\item if $E_1 \hookrightarrow E \twoheadrightarrow E_2$ is a short exact sequence in $\cA_{\;b}$ with $\nu\_{b,w}(E_1)=\nu\_{b,w}(E_2)$ then one of $E_i$'s is a rank zero tilt-semistable sheaf, and the other factor $E_j$ is a rank one sheaf such that $\ch(E_j) \in \widetilde{M}_{\w, n}$ and $(b,w) \in U_{[E_j]}$. 
 \end{enumerate*}
Let $\ell$ be a wall for class $\w_n$, then \cite[Theorem 3.3]{feyz:rank-r-dt-theory-from-0} implies that the destabilising factors are either of the form 
\begin{enumerate*}
	\item $v_1 = [\cO_X(-n)][1], v_2 = \w$ and there is no wall for class $\w$ between $\ell$ and the large volume limit, 
	\item $v_1 \in \widetilde{M}_{\w, n}, v_2 = (0, k_2H, \beta_2, m_2)$ with $\ell \in U_{v_1}$ and there is no wall for class $v_2$ between $\ell_f$ and the large volume limit.  
\end{enumerate*} 
Consider the generating series  
\begin{align*}
\dt^{\w, n}(x, y, z) \coloneqq & \sum_{\alpha_1\,=\, (1,\, k_1H,\, \beta_1,\, m_1) \,\in\, \widetilde{M}_{\w, n}} \J(\alpha_1)\ x^{k_1}y^{\beta_1}z^{m_1} \\
= & \sum_{\alpha_1\,=\, e^{k_1H}(1,\, 0,\, -\beta_1',\, -m_1') \,\in\, \widetilde{M}_{\w, n}} \qe_{m_1', \beta'_1}\ x^{k_1}\,y^{-\beta'_1 + \frac{1}{2}k_1^2H^2}\,z^{-m_1'-k\beta_1'H+\frac{1}{6}k^3H^3}\,.
\end{align*}
\begin{Def}\label{Def.A.tilde-alpha}
	Let $\al \in K(X)$ be either equal to $\w_n$ or $\al \in \widetilde{M}_{\w, n}$. For any real number $\mu \in \mathbb{R}$, we define
\begin{equation*}
\widetilde{A}(\al, \mu) \coloneqq \dt^{\w, n}(x, y, z) \cdot \  
\prod_{\substack{  
		\al'\, =\, (0,\, k'H,\, \beta',\, m')\, \in\, K(X)\\
		0 \, <\,  k'\, \leq n \\
		\frac{\beta'.H}{k'H^3} \, < \, \mu
}}
\exp\left((-1)^{\chi_{\al'}}\,\chi_{\al'}\ \J_{\infty}(\al') \, x^{c'}y^{\beta'}z^{m'}  \right) 
\end{equation*}
	where $\chi_{\al'}$ is defined via the following procedure: any non-zero term of $\widetilde{A}(\al, \mu)$ is of the form 
	\begin{equation}\label{form}
	\J(1, k_1H, \beta_1, m_1)\, x^{k_1}y^{\beta_1}z^{m_1}\cdot \prod_{
		\substack{i\ \in\ [2,\, p] \\
			\alpha_i = (0,\, k_iH,\, \beta_i,\, m_i) 
		}	
	}	\frac{\left((-1)^{\chi\_{\alpha_i}}\chi\_{\alpha_i} \J(\alpha_i) \, x^{k_i}y^{\beta_i}z^{m_i}  \right)^{q_i}}{q_i !}
	\end{equation} 
	where $q_i \neq 0$ for $i \in [2,\, p]$. We may assume 
	\begin{equation}\label{p-rank2}
	\frac{\beta_pH}{k_pH^3} \leq \frac{\beta_{p-1}H}{k_{p-1}H^3}  \leq \dots \leq \frac{\beta_2H}{k_2H^3},
	\end{equation}
	then for any $i \in [2,\, p]$ we define
	\begin{equation}\label{chi-rank2}
	\chi\_{\alpha_i} \coloneqq \chi\big(\, \alpha_i\ , \ \al-\sum_{2 \leq\, j\, < i}q_j\,\alpha_j\, \big). 
	\end{equation}  
\end{Def}
Applying the same argument as in Proposition \ref{prop-safe} implies the following.
\begin{Lem}
	Take a class $\al = (1, \tilde{k}H, \tilde{\beta}, \tilde{m}) \in \widetilde{M}_{\w, n}$ and a point $(b,w) \in U_{\alpha}$ which does not lie on a wall for class $\al$. Then $\J_{b,w}(\al)$ is the coefficient of $x^{\tilde{k}}y^{\tilde{\beta}}z^{\tilde{m}}$ in the series $\widetilde{A}(\al, \mu(\ell))$ where $\mu(\ell)$ is the gradient of the line $\ell$ passing through $\Pi(\al)$ and $(b,w)$. 
\end{Lem}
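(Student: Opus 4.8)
The plan is to transcribe the proof of Proposition \ref{prop-safe} to the rank one setting, with three substitutions: the rank $-1$ seed class is replaced by a rank one class, the line $L_\alpha$ is replaced by the safe area $U_\alpha$ of \eqref{u-al}, and the series $A$ is replaced by $\widetilde{A}$. First I would record the shape of a non-zero term of $\widetilde{A}(\al, \mu(\ell))$: by Definition \ref{Def.A.tilde-alpha} it is of the form \eqref{form}, assembled from a single rank one class $\alpha_1 = (1, k_1H, \beta_1, m_1) \in \widetilde{M}_{\w, n}$ and rank zero classes $\alpha_i = (0, k_iH, \beta_i, m_i)$ ordered as in \eqref{p-rank2}, with $\al = \alpha_1 + \sum_{i=2}^p q_i\alpha_i$. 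Exactly as in Proposition \ref{prop-safe}, such a term reads off a tower of numerical walls $\ell_i$ for the intermediate classes $v_i := \al - \sum_{j=2}^{i} q_j\alpha_j$, where $\ell_i$ is produced by the two factors $v_{i+1}$ and $q_{i+1}\alpha_{i+1}$. The ordering \eqref{p-rank2} forces these walls to nest correctly inside $U_\alpha$ (each $\ell_i$ lying on or above $\ell_{i-1}$, with $\ell_0 = \ell$), while \cite[Lemma 3.2]{feyz:rank-r-dt-theory-from-0} yields the strictly decreasing chain $\Delta_H(v_p) < \dots < \Delta_H(v_2) < \Delta_H(\al)$ that makes the induction terminate.

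Next I would set up the single-wall step, the rank two analogue of Corollary \ref{cor-dt-al}. For a wall $\ell_\al$ for class $\al$ lying in $U_\alpha$, \cite[Theorem 3.3]{feyz:rank-r-dt-theory-from-0} and \cite[Proposition 3.1]{feyz:rank-r-dt-theory-from-0} guarantee that every destabilising configuration along $\ell_\al$ consists of exactly one rank one factor in $\widetilde{M}_{\w, n}$ together with rank zero factors of a common $\nu_H$-slope. Repeating the computation of the $S$- and $U$-coefficients carried out in Steps 1 and 2 of Proposition \ref{prop-wall-crossing-1} --- with the roles of ``above'' and ``below'' the wall interchanged, which is precisely why the inequalities in \eqref{p-rank2} are reversed relative to \eqref{p} --- I would obtain that $\J_{b\_0, w_0^-}(\al)$ is the coefficient of $x^{\tilde k}y^{\tilde\beta}z^{\tilde m}$ in the product of the $(b\_0, w_0^+)$-refinement of $\dt^{\w, n}$ with the exponential of the rank zero contributions supported on $\ell_\al$, the Euler pairings entering exactly as packaged by \eqref{chi-rank2}. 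Here the vanishing $\chi(\alpha_i, \alpha_j) = 0$ for two rank zero classes of equal slope, as in Lemma \ref{lem-rank0-tilt-j}, is what collapses the digraph sum of \eqref{wcf} to the single admissible graph.

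With these ingredients I would run the double induction of Proposition \ref{prop-safe}: an outer induction on $\Delta_H(\al)$ and, within it, an induction on the finitely many walls of $\al$ inside $U_\alpha$. In the base case $\Delta_H(\al) = 0$ there is no wall for $\al$ by \cite[Lemma 3.2]{feyz:rank-r-dt-theory-from-0}, so $E$ is a large-volume-stable sheaf and $\J_{b,w}(\al) = \J(\al)$; the only surviving term of $\widetilde{A}(\al, \mu(\ell))$ is then the seed $\J(\al)\,x^{\tilde k}y^{\tilde\beta}z^{\tilde m}$, and the two agree. For the inductive step, assuming the claim just above a wall $\ell_\al$ (the large-volume chamber being trivial), I would pick $(b, w^\pm)$ just above and below $\ell_\al$ avoiding the walls of its destabilising factors, apply the single-wall formula above, and then substitute for each rank one factor $\alpha_1$ its own series $\widetilde{A}(\alpha_1, \mu(\ell_{\alpha_1}^+))$ via the induction hypothesis, which applies because $\Delta_H(\alpha_1) < \Delta_H(\al)$ whenever a rank zero factor is present. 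Identifying the resulting coefficient of $x^{\tilde k}y^{\tilde\beta}z^{\tilde m}$ with that of $\widetilde{A}(\al, \mu(\ell_\al^-))$ is then formally identical to the final paragraph of the proof of Proposition \ref{prop-safe}.

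I expect the only real obstacle to be organisational rather than conceptual: one must check that the safe-area geometry encoded in \cite[Proposition 3.1]{feyz:rank-r-dt-theory-from-0} and \cite[Theorem 3.3]{feyz:rank-r-dt-theory-from-0} supplies exactly the ``one rank one factor plus rank zero factors'' decomposition that drove the rank zero argument, and that reversing the slope ordering in \eqref{p-rank2} keeps the tower of walls nested inside $U_\alpha$ rather than escaping it. Once these geometric inputs are verified, the combinatorial matching between the exponential series $\widetilde{A}$ and the iterated wall-crossing formula requires no new idea beyond Proposition \ref{prop-safe}.
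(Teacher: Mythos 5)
Your proposal is correct and takes exactly the route the paper intends: the paper's entire proof of this lemma is the sentence ``Applying the same argument as in Proposition \ref{prop-safe} implies the following,'' and your transcription --- rank $-1$ seed class replaced by a rank one class in $\widetilde{M}_{\w, n}$, the line $L_{\al}$ replaced by the safe area $U_{\al}$ of \eqref{u-al}, the series $A$ replaced by $\widetilde{A}$, and the slope ordering reversed in \eqref{p-rank2} because rank one objects of $\cA_{\;b}$ require $b < \mu_H(\al)$ so that walls of smaller gradient lie higher --- is precisely that argument written out, with the correct geometric inputs (\cite[Proposition 3.1, Theorem 3.3]{feyz:rank-r-dt-theory-from-0}) supplying the ``one rank one factor plus rank zero factors'' wall structure that drives the double induction.
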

As a result, we can prove Theorem \ref{thm-rank2} in odd cases: 
\begin{Prop}\label{porp-rank2-final}
	 Consider a rank 2-class $\al \in K(X)$ with $\ch_1(\al) =kH$ where $\gcd(2, k) =1$. Let $\w = e^{-\frac{k-1}{2}H}\al = (2, H, \beta, m)$ and $\w_n = \w-[\cO_X(-n)]$ for $n \gg 0$. Then the coefficient of $x^{n+1}y^{\beta-\frac{n^2H^2}{2}}z^{m+ \frac{n^3H^3}{6}}$ in the series
	\begin{equation*}
	\frac{(-1)^{\chi(\cO_X(-n), \w) +1}}{\chi(\cO_X(-n), \w)} \widetilde{A}(\w_n, \mu(\ell_{f}^-))
	\end{equation*}
	is equal to $\J(\al)$. Here $\ell_{f}^-$ is a line passing through $\Pi(\w_n)$ which lies just below $\ell_{f}$. 
\end{Prop}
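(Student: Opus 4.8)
The plan is to run the argument of Theorem \ref{thm-rak--n-section 5} essentially verbatim, with the rank zero class $\v$ replaced by the rank two class $\w$, the set $M_{\v,n}$ of rank $-1$ classes replaced by the set $\widetilde{M}_{\w,n}$ of rank one classes, and the series $A$ replaced by $\widetilde A$. The first reduction is to replace $\al$ by $\w$: since $\gcd(2,k)=1$ and $\Pic(X)=\Z.H$, the line bundle $\cO_X(-\tfrac{k-1}{2})$ exists, and $\w=e^{-\frac{k-1}{2}H}\al$ is obtained from $\al$ by tensoring with it. As tensoring by a line bundle is an autoequivalence of $\cD(X)$ preserving $H$-Gieseker stability, we have $\J(\al)=\J(\w)$, so it suffices to recover $\J(\w)$ as the stated coefficient.

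First I would analyse the Joyce--Song wall $\ell_{\js}$ for class $\w_n$, i.e.\ the wall along which $\cO_X(-n)[1]$ and $\w$ acquire equal $\nu_{b,w}$-slope. By \cite[Theorem 3.3]{feyz:rank-r-dt-theory-from-0} and the discussion preceding Definition \ref{Def.A.tilde-alpha}, every wall for $\w_n$ has destabilising factors of exactly one of two shapes: either $v_1=[\cO_X(-n)][1]$ and $v_2=\w$ (with no wall for $\w$ between the wall and the large volume limit), or $v_1\in\widetilde{M}_{\w,n}$ together with a rank zero factor $v_2=(0,k_2H,\beta_2,m_2)$ with $\ell\in U_{v_1}$. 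Applying the wall-crossing formula \eqref{wcf} across $\ell_{\js}$ and repeating the combinatorial reduction of Proposition \ref{prop-wall-crossing-1} (now with the ordering and sign conventions \eqref{p-rank2} and \eqref{chi-rank2}), the only genuinely new contribution is the two-term piece from the pair $([\cO_X(-n)][1],\w)$, which by \eqref{sentence-2-terms} (after the degree shift $\chi(\cO_X(-n)[1],\w)=-\chi(\cO_X(-n),\w)$) contributes $(-1)^{\chi(\cO_X(-n),\w)}\chi(\cO_X(-n),\w)\,\J_{b,w^+}([\cO_X(-n)[1]])\,\J_{b,w^+}(\w)$. Since $\cO_X(-n)$ is rigid and $\Pic(X)=\Z.H$ gives $\J_{b,w^+}([\cO_X(-n)[1]])=1$, and since there is no wall for $\w$ above $\ell_{\js}$ so that $\J_{b,w^+}(\w)=\J_{b,w\gg0}(\w)$, this isolates the $\J(\w)$ term. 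To conclude $\J_{b,w\gg0}(\w)=\J(\w)$ I would use that $\mu_H(\w)=\tfrac12$ is not attained by any rank one subobject under $\Pic(X)=\Z.H$, so $\w$ is $\mu_H$-stable and admits no strictly semistable sheaves, whence tilt-, Gieseker-, and large-volume stability all coincide.

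Next I would track $\J_{b,w}(\w_n)$ down to a point $(b,w_f^-)$ just below $\ell_f$. All walls strictly below $\ell_{\js}$ have destabilising factors of the second shape only, so by the Lemma immediately preceding this proposition (the rank two analogue of Proposition \ref{prop-safe}) the accumulated contribution of these walls is precisely the coefficient of $x^{n+1}y^{\beta-\frac{n^2H^2}{2}}z^{m+\frac{n^3H^3}{6}}$ in $\widetilde A(\w_n,\mu(\ell_f^-))$. Combining this with the Joyce--Song wall computation, $\J_{b,w_f^-}(\w_n)$ equals the coefficient of that monomial in
\begin{equation*}
(-1)^{\chi(\cO_X(-n),\w)}\chi(\cO_X(-n),\w)\,\J(\w)\,x^{n+1}y^{\beta-\frac{n^2H^2}{2}}z^{m+\frac{n^3H^3}{6}}\ +\ \widetilde A(\w_n,\mu(\ell_f^-)).
\end{equation*}
Finally, since the Bogomolov--Gieseker inequality forbids any $\nu_{b,w}$-semistable object of class $\w_n$ below $\ell_f$, we have $\J_{b,w_f^-}(\w_n)=0$; solving for the coefficient and dividing by $(-1)^{\chi(\cO_X(-n),\w)}\chi(\cO_X(-n),\w)$ yields the asserted formula.

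The main obstacle is the combinatorial bookkeeping at the Joyce--Song wall: one must check, exactly as in Proposition \ref{prop-wall-crossing-1}, that among all connected simply-connected digraphs and all $U$-coefficients only the single rank one factor $\al_1\in\widetilde{M}_{\w,n}$ (respectively the distinguished pair $([\cO_X(-n)][1],\w)$) survives, the vanishing $\chi(\alpha_i,\alpha_j)=0$ between rank zero factors collapsing the graph as in \eqref{chi-1}. The one point demanding care beyond a verbatim transcription is the orientation of the slope ordering \eqref{p-rank2}, which is reversed relative to \eqref{p} because the rank one factor now sits on the low-$w$ side of the wall rather than the high side; once this sign is handled consistently, the argument closes.
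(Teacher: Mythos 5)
Your proposal is correct and takes essentially the same route as the paper's own proof, which likewise first reduces to $\J(\al)=\J(\w)=\J_{\mathrm{ti}}(\w)$ using that $\gcd(2,k)=1$ and $\Pic(X)=\Z.H$ force every tilt-semistable sheaf of class $\al$ or $\w$ to be slope-stable, and then invokes ``the same argument as in Theorem \ref{thm-rak--n-section 5}'' with $\widetilde{M}_{\w,n}$ and $\widetilde{A}$ in place of $M_{\v,n}$ and $A$. The paper compresses this into two sentences; your expanded treatment of the Joyce--Song wall, the descent through the walls of the second shape, and the vanishing of $\J_{b,w}(\w_n)$ below $\ell_f$ is precisely what that citation unwinds to.
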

\begin{proof}
	We know 
	any tilt-semistable sheaf of class $\al$ or $\w$ is slope-stable, so it is in particular H-Gieseker stable. Therefore $\J(\al) = \J(\w) = \J_{\mathrm{ti}}(\w)$. Since all walls for class $\w_n$ lie above or on $\ell_{f}$, the same argument as in Theorem \ref{thm-rak--n-section 5} implies the claim.
\end{proof}

\textbf{Case (ii)} Now fix a rank $2$-class 
\begin{equation*}
\w =  (2, 0, \beta, m).
\end{equation*}
Then pick $n \gg 0$ and let $\w_n = \w-[\cO_X(-n)]$ as before. \cite[Theorem B.2]{feyz:rank-r-dt-theory-from-0} implies that there is no $\nu_{b,w}$-semistable object of class $\w_n$ below the Joyce-Song wall $\ell_{\js}$ which $\cO_X(-n)[1]$ is making. 

We first examine walls for class $\w_n$ above $\ell_{\js}$. Define $\widetilde{M}_{\w, n} \subset K(X)$ to be the set of all rank one classes $\al = (1, \ch_1, \ch_2, \ch_3) \in K(X)$ with $\Delta_H(\al) \geq 0$ such that 
\begin{equation*}
0 \leq \frac{\ch_1H^2}{H^3} < n \qquad \text{and} \qquad \text{$\Pi(\al)$ lies above or on $\ell_{\js}$}. 
\end{equation*}
For any class $\alpha \in \widetilde{M}_{\w, n}$, we consider the open subset $U_{\al}$ as in \eqref{u-al}. Let $\ell$ be a wall for class $\w_n$ above $\ell_{\js}$, then \cite[Theorem 3.3]{feyz:rank-r-dt-theory-from-0} implies that the destabilising factors are of the form
\begin{equation*}
v_1 \in \widetilde{M}_{\w, n} \qquad \text{and} \qquad v_2 = (0, k_2H, \beta_2, m_2)
\end{equation*}
such that $\ell \in U_{v_1}$ and there is no wall for class $v_2$ between $\ell_{\js}$ and the large volume limit\footnote{Note that in case (2)(b) of \cite[Theorem 3.3]{feyz:rank-r-dt-theory-from-0}, we apply \cite[Theorem B.1]{feyz:rank-r-dt-theory-from-0} which says there is no wall for our rank zero class $v_2$ above the wall that $\cO_X(-n)$ is making. }. Thus we get the following. 
\begin{Lem}
	Let $(b,w^+) \in U$ be a point just above $\ell_{\js}$. Then $\J_{b, w^+}(\w_n)$ is the coefficient of $x^{n}y^{\beta-\frac{n^2H^2}{2}}z^{m+ \frac{n^3H^3}{6}}$ in the series $\widetilde{A}(\al, \mu(\ell_{\js}^+))$ where $\ell_{\js}^+$ is a line passing through $\Pi(\w_n)$ which lies just above $\ell_{\js}$ 
\end{Lem}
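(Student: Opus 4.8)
The plan is to replay, for the rank-one class $\w_n$, the inductive wall-crossing argument of Proposition \ref{prop-safe} (and of the preceding Lemma), now with the Joyce--Song wall $\ell_{\js}$ as the terminal position instead of the line $\ell_f$. Note first that the exponents $\big(n,\ \beta-\tfrac12 n^2H^2,\ m+\tfrac16 n^3H^3\big)$ are exactly the three higher components of $\ch(\w_n)$, so the monomial in the statement is the one carried by $\w_n$ itself, which sits on the boundary of $\widetilde M_{\w,n}$ (it has $\ch_1H^2/H^3=n$, whereas $\widetilde M_{\w,n}$ requires this quantity to be $<n$). The structural input already extracted before the statement from \cite[Theorem 3.3]{feyz:rank-r-dt-theory-from-0} (using \cite[Theorem B.1]{feyz:rank-r-dt-theory-from-0} to control the rank-zero factor) is that every wall $\ell$ for $\w_n$ lying above $\ell_{\js}$ has a destabilising sequence with exactly one factor $v_1\in\widetilde M_{\w,n}$ satisfying $\ell\in U_{v_1}$ and $\Delta_H(v_1)<\Delta_H(\w_n)$, and one rank-zero factor $v_2=(0,k_2H,\beta_2,m_2)$ admitting no wall between $\ell_{\js}$ and the large volume limit.

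I would then induct on the finitely many walls for $\w_n$ between the large volume limit and $\ell_{\js}^+$. At each wall I apply the reduction of the wall-crossing formula \eqref{wcf} carried out in Proposition \ref{prop-wall-crossing-1}: the rank-zero factor $v_2$ has no wall above $\ell_{\js}$, so $\J_{b,w^+}(v_2)=\J_\infty(v_2)=\J(v_2)$ by Lemma \ref{lem-rank0-tilt-j}, while the rank-one factor $v_1\in\widetilde M_{\w,n}$ has strictly smaller discriminant and is, by the preceding Lemma, already the relevant coefficient of $\widetilde A(v_1,\cdot)$. Assembling the surviving combinatorial coefficients — the factor $1/(q-1)!$ collapsing the identical rank-zero contributions into an exponential, together with the signs $(-1)^{\chi_{\al_i}}\chi_{\al_i}$ and the slope ordering \eqref{p-rank2} — reproduces term by term the series of Definition \ref{Def.A.tilde-alpha}. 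This is the same bookkeeping as in Steps 1--3 of Proposition \ref{prop-wall-crossing-1} and in Proposition \ref{prop-safe}, the only change being that \eqref{p-rank2} is ordered oppositely to \eqref{p} since the safe area of a rank-one class is approached from the large-slope side. Descending the induction to a point $(b,w^+)$ just above $\ell_{\js}$ then identifies $\J_{b,w^+}(\w_n)$ with the coefficient of $x^{n}y^{\beta-\frac{n^2H^2}{2}}z^{m+\frac{n^3H^3}{6}}$ in $\widetilde A(\w_n,\mu(\ell_{\js}^+))$.

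The step I expect to be the main obstacle is the base of this induction, precisely because $\w_n$ is a boundary class: unlike an $\al\in\widetilde M_{\w,n}$, the class $\w_n$ is not itself a summand of $\dt^{\w,n}$, so its large-volume contribution must be recovered entirely from the rank-one factors of smaller discriminant in $\widetilde M_{\w,n}$ together with their rank-zero decorations, rather than from a ready-made leading term. Checking that this recovery leaves nothing uncounted as one passes from the large volume limit down to $\ell_{\js}^+$ is exactly the point where \cite[Theorem B.2]{feyz:rank-r-dt-theory-from-0}, asserting the absence of $\nu_{b,w}$-semistable objects of class $\w_n$ below $\ell_{\js}$, must be combined with the wall description above $\ell_{\js}$; once this is in place, the remaining identities are the routine combinatorial consequences already developed in Section \ref{js section}.
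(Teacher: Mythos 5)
Your overall strategy is the one the paper intends: the Lemma is obtained by exactly the induction on the finitely many walls for $\w_n$ above $\ell_{\js}$ that you describe, feeding the wall structure (rank-one factor $v_1\in\widetilde{M}_{\w,n}$ with $\ell\in U_{v_1}$ and smaller discriminant, rank-zero factor $v_2$ with no wall up to the large volume limit) into the machinery of Proposition \ref{prop-wall-crossing-1}, Lemma \ref{lem-rank0-tilt-j} and Proposition \ref{prop-safe}; your remark that \eqref{p-rank2} is ordered oppositely to \eqref{p} for geometric reasons is also correct.

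However, the step you yourself single out as the crux, the base case for the ``boundary class'' $\w_n$, is resolved incorrectly, and as proposed it would fail. The large-volume value $\J_{b,w\gg0}(\w_n)=\J_{\mathrm{ti}}(\w_n)$ is a rank-one DT invariant which persists through every wall-crossing as the $q=1$ term of \eqref{wcf}; hence $\J_{b,w^+}(\w_n)=\J_{\mathrm{ti}}(\w_n)+(\text{corrections})$, where every correction carries at least one rank-zero factor $\J(\al')$. It is therefore impossible to ``recover'' $\J_{\mathrm{ti}}(\w_n)$ from the terms of $\widetilde{A}$ whose leading class lies strictly inside $\widetilde{M}_{\w,n}$: those decorated terms are precisely the corrections, and they are numerically independent of $\J_{\mathrm{ti}}(\w_n)$. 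Under your reading the coefficient of $x^{n}y^{\beta-\frac{n^2H^2}{2}}z^{m+\frac{n^3H^3}{6}}$ in $\widetilde{A}(\w_n,\mu(\ell_{\js}^+))$ computes only $\J_{b,w^+}(\w_n)-\J_{\mathrm{ti}}(\w_n)$, so the Lemma would be off by $\J_{\mathrm{ti}}(\w_n)$, which is a nonzero rank-one DT invariant in general. The correct resolution is much simpler: $\w_n$ must itself be admitted as a leading class of the series. Indeed $\Pi(\w_n)$ lies \emph{on} $\ell_{\js}$, $\ch_1(\w_n)H^2/H^3=n$ is exactly the boundary value in the definition of $\widetilde{M}_{\w,n}$, and Definition \ref{Def.A.tilde-alpha} explicitly allows $\al=\w_n$; reading the boundary inclusively, $\dt^{\w,n}$ contains the term $\J(\w_n)\,x^{n}y^{\beta-\frac{n^2H^2}{2}}z^{m+\frac{n^3H^3}{6}}$, the base case at large volume is the single identity that this coefficient equals $\J_{b,w\gg0}(\w_n)$, and the induction then only has to match wall corrections with decorated terms, exactly as in Proposition \ref{prop-safe} and in Theorem \ref{thm-rak--n-section 5}, where $\v_n$ plays the same leading role inside $A(\v_n,\cdot)$. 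Finally, \cite[Theorem B.2]{feyz:rank-r-dt-theory-from-0} cannot close this gap: it concerns the region \emph{below} $\ell_{\js}$ and enters only afterwards, in \eqref{w} and Proposition \ref{prop-rank2-final-case ii}, to give $\J_{b,w^-}(\w_n)=0$; it says nothing about $\J_{b,w^+}(\w_n)$ above the wall, which is what this Lemma computes.
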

Along the Joyce-Song wall, there are three possibilities for the destabilising factors:
\begin{enumerate*}
	\item $v_1 = \w$ and $v_2 = [\cO_X(-n)][1]$,
	\item $v_1 = (1, 0, \beta', m')$ and $v_2 = \left(0,\, nH,\, \beta-\beta' -\frac{n^2H^2}{2},\, m-m'+\frac{n^3H^3}{6}\right)$ such that $\beta'.H = \frac{1}{2}\beta.H$,
	\item $v_1 = (1, 0, \beta', m')$, $v_1' = (1, 0, \beta'', m'')$ and $v_2 = [\cO_X(-n)][1]$ such that $v_1+v_1'+v_2 = \w_n$ and $\beta'.H = \beta''.H = \frac{1}{2}\beta.H$.   
\end{enumerate*} 
Let $(b, w^{\pm})$ be points just above (below) $\ell_{\js}$. We know there is no wall for all the above destabilising factors between $(b,w^+)$ and the large volume limit. Applying the wall-crossing formula \eqref{wcf} gives 
\begin{align}\label{w}
\J_{b, w^-}(\w_n) = & \J_{b, w^+}(\w_n) + (-1)^{\chi(\cO_X(-n), \w)}\chi(\cO_X(-n), \w)\, \J_{\mathrm{ti}}(\w) + A\_{\w_n} 
\end{align}
where 
\begin{align*}
A_{\w_n} \coloneqq &\ \sum_{\substack{
v_1 = (1, 0, \beta', m'), \ 2\beta'.H = \beta.H\\
v_2 = \left(0,\, nH,\, \beta-\beta' -\frac{n^2H^2}{2},\, m-m'+\frac{n^3H^3}{6}\right) 
}} (-1)^{\chi(v_2, v_1)+1}\chi(v_2, v_1)\J(v_1)\J(v_2) \nonumber \\
&\ + \sum_{\substack{
v_1 = (1, 0, \beta', m')\\
v_1' = (1, 0, \beta'', m'')\\
v_1+v_1'-[\cO_X(-n)] = \w_n \\
2\beta'.H = 2\beta''.H = \beta.H
}} C_{v_1, v_1'}\J(v_1)\J(v_1')\, .\nonumber 
\end{align*}
Here $C_{v, v'} \in \Q$ depends on $v, v'$ and can be explicitly determined by \eqref{wcf}.  
\begin{Prop}\label{prop-rank2-final-case ii}
	Consider a rank 2-class $\al \in K(X)$ with $\ch_1(\al) =kH$ where $\gcd(2, k) =2$. Let $\w = e^{-\frac{k}{2}H}\al = (2, 0, \beta, m)$ and $\w_n = \w-[\cO_X(-n)]$ for $n \gg 0$. Then $\J_{\mathrm{ti}}(\al)$ is equal to  
	\begin{equation*}
	\frac{(-1)^{\chi(\cO_X(-n), \w) +1}}{\chi(\cO_X(-n), \w)} \left(\J_{b, w^+}(\w_n) + A\_{\w_n}  \right)
	\end{equation*}
	where $\J_{b, w^+}(\w_n)$ is the coefficient of $x^{n}y^{\beta-\frac{n^2H^2}{2}}z^{m+ \frac{n^3H^3}{6}}$ in the series
	\begin{equation*}
	\frac{(-1)^{\chi(\cO_X(-n), \w) +1}}{\chi(\cO_X(-n), \w)} \widetilde{A}(\w_n, \mu(\ell_{\js}^+)).
	\end{equation*}
\end{Prop}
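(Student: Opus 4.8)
The plan is to obtain $\J_{\mathrm{ti}}(\al)$ from a single application of the wall-crossing relation \eqref{w} across the Joyce--Song wall $\ell_{\js}$, after first reducing from $\al$ to its twist $\w$. First I would record that $\w = e^{-\frac{k}{2}H}\al = \al \otimes \cO_X(-\tfrac{k}{2})$, which is a well-defined twist because $k$ is even and $\Pic(X) = \Z.H$. Tensoring by the line bundle $\cO_X(-\tfrac{k}{2})$ is an exact autoequivalence of $\Coh(X)$ preserving both Gieseker and tilt stability (it merely translates the parameter $b$), so it identifies the moduli stacks of tilt-semistable sheaves of classes $\al$ and $\w$. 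Hence $\J_{\mathrm{ti}}(\al) = \J_{\mathrm{ti}}(\w)$, and it suffices to compute $\J_{\mathrm{ti}}(\w)$.

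Next I would feed in the vanishing below the Joyce--Song wall. By \cite[Theorem B.2]{feyz:rank-r-dt-theory-from-0} there is no $\nu_{b,w}$-semistable object of class $\w_n$ for $(b,w)$ below $\ell_{\js}$, so $\J_{b, w^-}(\w_n) = 0$ for $(b, w^-)$ just below the wall. Substituting this into the already-derived relation \eqref{w} leaves
\begin{equation*}
0 = \J_{b, w^+}(\w_n) + (-1)^{\chi(\cO_X(-n), \w)}\chi(\cO_X(-n), \w)\, \J_{\mathrm{ti}}(\w) + A_{\w_n}.
\end{equation*}
For $n \gg 0$ the Euler pairing $\chi(\cO_X(-n), \w)$ is a nonzero polynomial-in-$n$ integer, so I may divide by it and solve; using $-1\big/(-1)^{\chi} = (-1)^{\chi+1}$ this yields
\begin{equation*}
\J_{\mathrm{ti}}(\al) = \J_{\mathrm{ti}}(\w) = \frac{(-1)^{\chi(\cO_X(-n), \w) +1}}{\chi(\cO_X(-n), \w)}\big(\J_{b, w^+}(\w_n) + A_{\w_n}\big),
\end{equation*}
which is the asserted identity.

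For the remaining coefficient-extraction clause I would simply cite the preceding Lemma, which identifies $\J_{b, w^+}(\w_n)$ with the coefficient of $x^{n}y^{\beta-\frac{n^2H^2}{2}}z^{m+\frac{n^3H^3}{6}}$ in $\widetilde{A}(\w_n, \mu(\ell_{\js}^+))$; that Lemma follows from the inductive argument of Proposition \ref{prop-safe}, applied to the rank one destabilising factor in $\widetilde{M}_{\w, n}$ and using that every wall above $\ell_{\js}$ splits $\w_n$ into one factor of $\widetilde{M}_{\w,n}$ and one rank zero factor by \cite[Theorem 3.3]{feyz:rank-r-dt-theory-from-0}. The genuine work sits upstream in the relation \eqref{w}, and the main obstacle is the bookkeeping along $\ell_{\js}$ itself: one must check that the three families of destabilising factors exhaust the nonzero contributions to the wall-crossing formula \eqref{wcf}, and in particular that the $q=3$ configurations splitting $\w_n$ into two rank one sheaves $v_1, v_1'$ of equal reduced slope together with $\cO_X(-n)[1]$ contribute exactly the second sum in $A_{\w_n}$, with its combinatorial coefficients $C_{v_1, v_1'}$. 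This splitting into two equal-slope rank one pieces is the feature that distinguishes the even case from the rank zero analysis of Section \ref{js section}, where no such configuration occurs.
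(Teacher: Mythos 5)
Your proposal is correct and follows essentially the same route as the paper: the Proposition is exactly the combination of the already-derived relation \eqref{w}, the vanishing $\J_{b,w^-}(\w_n)=0$ below $\ell_{\js}$ from \cite[Theorem B.2]{feyz:rank-r-dt-theory-from-0}, the identification $\J_{\mathrm{ti}}(\al)=\J_{\mathrm{ti}}(\w)$ via the line-bundle twist, and the preceding Lemma for the coefficient extraction. Your sign bookkeeping $-1/(-1)^{\chi}=(-1)^{\chi+1}$ is right, and your reading of the coefficient clause (coefficient in $\widetilde{A}(\w_n,\mu(\ell_{\js}^+))$ itself, as in the Lemma, rather than with the prefactor repeated as the Proposition's wording literally suggests) is the intended one.
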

The final step is to apply the wall-crossing formula \eqref{wcf-tilt-gieseker} between tilt-stability and Gieseker-stability. Since $\al$ is of rank 2, there are at most two factors $\alpha_1, \alpha_2$ and we know they have the same tilt-slope, so \eqref{sentence-2-terms} implies
\begin{equation*}
\J_{\text{ti}}(\al) = \J(\al) + \sum_{\substack{
\alpha_1 = e^{\frac{k}{2}H}(1, 0, -\beta_1, -m_1) \in C(\Coh(X)) \\
\alpha_2 = e^{\frac{k}{2}H}(1, 0, -\beta_2, -m_2) \in C(\Coh(X))\\
\al_1+\al_2 =\al\\
\beta_1.H =\beta_2.H, \ 
m_1 > m_2}} 
 (-1)^{m_1-m_2}(m_1-m_2)\qe_{m_1, \beta_1}\qe_{m_2, \beta_2}.
\end{equation*}
 Combining this with Proposition \ref{prop-rank2-final-case ii} completes the proof of Theorem \ref{thm-rank2}. 
 
\begin{Rem}\label{rem.final}
	The results in Section \ref{js section} and Section \ref{section.rank 2} are based on the wall-crossing arguments in \cite{feyz:rank-r-dt-theory-from-0,feyz:rank-r-dt-theory-from-1}. As discussed in these papers, the restricted set of weak stability conditions handled in \cite{chunyi:stability-condition-quintic-threefold,koseki:double-triple-solids,liu:bg-ineqaulity-quadratic} are sufficient for our purposes, thus Theorem \ref{thm-rank0-n} and Theorem \ref{thm-rank2} are valid in these cases. 
\end{Rem}

\bibliography{mybib}

\begin{thebibliography}{BMT14}

\bibitem[BMS16]{bayer:the-space-of-stability-conditions-on-abelian-threefolds}
A.~Bayer, E.~Macr\`\i, and P.~Stellari.
\newblock The space of stability conditions on abelian threefolds, and on some
  {C}alabi-{Y}au threefolds.
\newblock {\em Invent. Math.}, 206(3):869--933, 2016.

\bibitem[BMT14]{bayer:bridgeland-stability-conditions-on-threefolds}
A.~Bayer, E.~Macr\`\i, and Y.~Toda.
\newblock Bridgeland stability conditions on threefolds {I}:
  {B}ogomolov-{G}ieseker type inequalities.
\newblock {\em J. Algebraic Geom.}, 23(1):117--163, 2014.

\bibitem[Bri08]{bridgeland:K3-surfaces}
T.~Bridgeland.
\newblock Stability conditions on {$K3$} surfaces.
\newblock {\em Duke Math. J.}, 141(2):241--291, 2008.

\bibitem[DM11]{denef:split-states}
F.~Denef and G.~W. Moore.
\newblock Split states, entropy enigmas, holes and halos.
\newblock {\em J. High Energy Phys.}, (11):129, i, 152, 2011.

\bibitem[FT20]{feyz:curve-counting}
S.~Feyzbakhsh and R.~P. Thomas.
\newblock Curve counting and {S}-duality, arXiv:2007.03037, 2020.

\bibitem[FT21a]{feyz:thomas-noether-loci}
S.~Feyzbakhsh and R.~P. Thomas.
\newblock An application of wall-crossing to {N}oether-{L}efschetz loci.
\newblock {\em Q. J. Math.}, 72(1-2):51--70, 2021.
\newblock with an appendix by C. Voisin.

\bibitem[FT21b]{feyz:rank-r-dt-theory-from-0}
S.~Feyzbakhsh and R.~P. Thomas.
\newblock Rank r {DT} theory from rank 0, arXiv:2103.02915, 2021.

\bibitem[FT21c]{feyz:rank-r-dt-theory-from-1}
S.~Feyzbakhsh and R.~P. Thomas.
\newblock Rank r {DT} theory from rank 1, arXiv:2108.02828, 2021.

\bibitem[Joy07]{joyce:configurations-iii}
D.~Joyce.
\newblock Configurations in abelian categories. {III}. {S}tability conditions
  and identities.
\newblock {\em Adv. Math.}, 215(1):153--219, 2007.

\bibitem[Joy21]{joyce:enumerative-invariants}
D.~Joyce.
\newblock Enumerative invariants and wall-crossing formulae in abelian
  categories, arXiv:2111.04694, 2021.

\bibitem[JS12]{joyce:a-theoery-of-generalised-donadlson-thomas-invariants}
D.~Joyce and Y.~Song.
\newblock A theory of generalized {D}onaldson-{T}homas invariants.
\newblock {\em Mem. Amer. Math. Soc.}, 217(1020):iv+199, 2012.

\bibitem[Kos20]{koseki:double-triple-solids}
N.~Koseki.
\newblock Stability conditions on {C}alabi-yau double/triple solids,
  arXiv:2007.00044, 2020.

\bibitem[Li19]{chunyi:stability-condition-quintic-threefold}
C.~Li.
\newblock On stability conditions for the quintic threefold.
\newblock {\em Invent. Math.}, 218(1):301--340, 2019.

\bibitem[Liu21]{liu:bg-ineqaulity-quadratic}
S.~Liu.
\newblock Stability condition on {C}alabi--{Y}au threefold of complete
  intersection of quadratic and quartic hypersurfaces, arXiv:2108.08934, 2021.

\bibitem[LQ13]{qin:dt-certain}
W.~P. Li and Z.~Qin.
\newblock Donaldson-{T}homas invariants of certain {C}alabi-{Y}au 3-folds.
\newblock {\em Comm. Anal. Geom.}, 21(3):541--578, 2013.

\bibitem[MP16]{maciocia:fm-transform-ii}
A.~Maciocia and D.~Piyaratne.
\newblock Fourier-{M}ukai transforms and {B}ridgeland stability conditions on
  abelian threefolds {II}.
\newblock {\em Internat. J. Math.}, 27(1):1650007, 27, 2016.

\bibitem[Obe22]{george-k3times-E}
G.~Oberdieck.
\newblock Donaldson-{T}homas theory of ${K}3\times {E}$ in higher rank, in
  preparation, 2022.

\bibitem[OSV04]{vafa:black-hole-attractors}
H.~Ooguri, A.~Strominger, and C.~Vafa.
\newblock Black hole attractors and the topological string.
\newblock {\em Phys. Rev. D (3)}, 70(10):106007, 13, 2004.

\bibitem[Sto12]{stoppa:d0-d6}
J.~Stoppa.
\newblock D0--{D}6 states counting and {GW} invariants.
\newblock {\em Lett. Math. Phys.}, 102(2):149--180, 2012.

\bibitem[Tho00]{thomas:holomorphic-invariants}
R.~P. Thomas.
\newblock A holomorphic {C}asson invariant for {C}alabi-{Y}au 3-folds, and
  bundles on {$K3$} fibrations.
\newblock {\em J. Differential Geom.}, 54(2):367--438, 2000.

\bibitem[Tod10a]{toda:curve-counting-theories-visa-stable-objects-i}
Y.~Toda.
\newblock Curve counting theories via stable objects {I}. {DT}/{PT}
  correspondence.
\newblock {\em J. Amer. Math. Soc.}, 23(4):1119--1157, 2010.

\bibitem[Tod10b]{toda:rank-2-dt}
Y.~Toda.
\newblock On a computation of rank two {D}onaldson-{T}homas invariants.
\newblock {\em Commun. Number Theory Phys.}, 4(1):49--102, 2010.

\bibitem[Tod13]{toda:bogomolov-counting}
Y.~Toda.
\newblock Bogomolov--{G}ieseker-type inequality and counting invariants.
\newblock {\em J. Topol.}, 6(1):217--250, 2013.

\end{thebibliography}
\bibliographystyle{halpha}

\bigskip \noindent
{\tt{s.feyzbakhsh@imperial.ac.uk}}\medskip

\noindent Department of Mathematics, Imperial College, London SW7 2AZ, United Kingdom

\end{document}